\documentclass[11pt,a4j,twoside]{article}

\usepackage{amsmath,amssymb}
\usepackage{amsthm}
\usepackage[abbrev]{amsrefs}
\usepackage{latexsym}
\usepackage{graphicx}

\usepackage[english]{babel}
\usepackage{fancyhdr}
\usepackage{a4wide}

\usepackage{bm}

\usepackage[pagewise, mathlines]{lineno}

\allowdisplaybreaks[1]

\numberwithin{equation}{section}

\newtheorem{thm}{Theorem}[section]
\newtheorem{prop}[thm]{Proposition}
\newtheorem{lem}[thm]{Lemma}
\newtheorem{cor}[thm]{Corollary}
\newtheorem{dfn}[thm]{Definition}
\theoremstyle{definition} 
\newtheorem{ex}[thm]{Example}
\newtheorem{rem}[thm]{Remark}

\newcommand\ND{\newcommand}

\ND\lref[1]{Lemma~\ref{#1}}
\ND\tref[1]{Theorem~\ref{#1}}
\ND\pref[1]{Proposition~\ref{#1}}
\ND\sref[1]{Section~\ref{#1}}
\ND\ssref[1]{Subsection~\ref{#1}}
\ND\aref[1]{Appendix~\ref{#1}}
\ND\rref[1]{Remark~\ref{#1}}
\ND\cref[1]{Corollary~\ref{#1}}
\ND\eref[1]{Example~\ref{#1}}
\ND\fref[1]{Fig.\ {#1} }
\ND\lsref[1]{Lemmas~\ref{#1}}
\ND\tsref[1]{Theorems~\ref{#1}}
\ND\dref[1]{Definition~\ref{#1}}
\ND\psref[1]{Propositions~\ref{#1}}
\ND\rsref[1]{Remarks~\ref{#1}}
\ND\sssref[1]{Subsections~\ref{#1}}
\ND\esref[1]{Examples~\ref{#1}}
\ND\asref[1]{Assumption~\ref{#1}}

\newcommand{\ep}{\varepsilon}

\newcommand{\wilde}{\widetilde}
\newcommand{\h}{\quad}
\newcommand{\dis}{\displaystyle}

\newcommand{\cl}{c\`adl\`ag\ }
\newcommand{\Prob}{\mathbb{P}}
\newcommand{\Ex}{\mathbb{E}}
\newcommand{\bi}{\bar{\iota}}
\newcommand{\II}{\mathrm{I\hspace{-1.2pt}I}}

\newcommand{\bv}{\mathbf{v}}
\newcommand{\bu}{\mathbf{u}}
\newcommand{\bU}{\mathbf{U}}
\newcommand{\bV}{\mathbf{V}}
\newcommand{\bW}{\mathbf{W}}
\newcommand{\bh}{\mathbf{h}}

\newcommand{\bdf}{\mathbf{f}}
\newcommand{\ba}{\mathbf{a}}
\newcommand{\bb}{\mathbf{b}}
\newcommand{\bcl}{\mathbf{cl}}
\newcommand{\bvl}{\mathbf{vl}}

\newcommand{\be}{\mathbf{e}}
\newcommand{\bB}{\mathbf{B}}
\newcommand{\bA}{\mathbf{A}}
\newcommand{\bH}{\mathbf{H}}
\newcommand{\bJ}{\mathbf{J}}
\newcommand{\bdm}{\mathbf{m}}
\newcommand{\cA}{\mathcal{A}}
\newcommand{\cC}{\mathcal{C}}
\newcommand{\cD}{\mathcal{D}}
\newcommand{\cE}{\mathcal{E}}
\newcommand{\cF}{\mathcal{F}}
\newcommand{\cH}{\mathcal{H}}

\newcommand{\cK}{\mathcal{K}}
\newcommand{\cL}{\mathcal{L}}

\newcommand{\cO}{\mathcal{O}}
\newcommand{\cR}{\mathcal{R}}
\newcommand{\cV}{\mathcal{V}}
\newcommand{\cW}{\mathcal{W}}
\newcommand{\cX}{\mathcal{X}}
\newcommand{\cY}{\mathcal{Y}}
\newcommand{\cZ}{\mathcal{Z}}

\newcommand{\bbR}{\mathbb{R}}
\newcommand{\sJ}{\mathsf{J}}
\newcommand{\sk}{\mathsf{k}}

\title{Stochastic Jacobi fields along discontinuous martingales on Riemannian submanifolds}
\date{}
\author{Fumiya Okazaki}
\begin{document}
\maketitle
\footnote{}
\footnote{Email address: okazaki.f.660b@m.isct.ac.jp}
\renewcommand{\thepage}{\arabic{page}}
\begin{abstract}
In this article, we consider discontinuous martingales on tangent bundles over submanifolds of Euclidean space. First, we introduce a connection rule on tangent bundles and establish the It\^o calculus for discontinuous semimartingales on tangent bundles. Then we focus on harmonic maps with respect to non-local Dirichlet forms and show that the derivative of harmonic maps along infinitesimal symmetries induces discontinuous martingales on tangent bundles. This process may be viewed as a stochastic Jacobi field along the image martingale.
We also introduce the stochastic parallel transport of tangent vectors along \cl semimartingales on Riemannian submanifolds with projected jumps. Using the parallel transport, we obtain the mean-value property for the differential of harmonic maps involving a jump part expressed through the second fundamental form. We also obtain a derivative formula for harmonic maps for isotropic L\'evy processes with a Brownian component on compact Riemannian manifolds.
\end{abstract}

\begin{flushleft}
{\bf Keywords:} Manifold-valued martingale; Jump process; Stochastic analysis on manifolds; Harmonic map; Dirichlet form.\\
{\bf MSC2020 Subject Classifications: 58J65, 60J46}
\end{flushleft}

\section{Introduction and main results}
Harmonic maps are defined by critical points of the energy functional for maps between two Riemannian manifolds. Let $(E,h)$, $(M,g)$ be Riemannian manifolds and assume that $M$ is isometrically embedded in the higher-dimensional Euclidean space $\mathbb{R}^d$. For a smooth map $u \colon E \to M$, we define the energy of $u$ by
\[
\cE(u):=\frac{1}{2}\int_E | \nabla^E u |^2(z) \, \bdm(dz),
\]
where $\bdm$ is the Riemannian volume measure for the metric $h$, $\nabla^E$ is the gradient operator on $E$, and we regard $u$ as an $\mathbb{R}^d$-valued map. A map $u$ is called a harmonic map if it is a critical point of the energy which is characterized by the Euler-Lagrange equation
\[
\Delta_E u(z) \perp T_{u(z)}M,\ z\in E,
\]
where $\Delta_E$ is the Laplace-Beltrami operator on $(E,h)$ and we regard $u$ as an $\mathbb{R}^d$-valued function.
On the other hand, probabilistic approaches for harmonic maps between Riemannian manifolds have been developed in relation to Brownian motion and martingales on Riemannian manifolds. Indeed, by \cites{Meyer81}, it has been shown that the harmonicity of a smooth map can be characterized through the property that the image of Brownian motion on $(E,h)$ is a continuous martingale on $(M,g)$. In \cite{Pic01}, this characterization is established for harmonic maps in a weak sense with respect to strongly local Dirichlet forms on locally compact separable metric spaces in terms of their associated diffusions.

One way to investigate how the geometry of manifolds affects harmonic maps is to study the differentials of harmonic maps.
A probabilistic approach to the differentials of harmonic maps based on Jacobi fields along continuous martingales on manifolds was developed in \cites{Arnaudon96, AT98, AT98_2}. For a given smooth harmonic map between two Riemannian manifolds, by regarding the initial value of Brownian motion as another parameter of the stochastic process, we can obtain a family of continuous martingales on the target manifold which smoothly depends on the initial values of Brownian motion. Then differentiating the family of martingales with respect to the initial values of Brownian motion yields a stochastic process on $TM$. It has been shown in \cite{Arnaudon96} that the derivatives of differentiable families of continuous martingales are martingales on tangent bundles. Martingales on tangent bundles have also been employed in the study of the regularity of harmonic maps and their gradients \cites{AT98_2, ALT99, Pic00}. A key feature of these martingales on manifolds is that they reflect geometric properties of manifolds such as curvature. For example, the Jacobi field associated with the variation of Brownian motion on a Riemannian manifold can be described through the Ricci curvature and it is related to the gradient formula for solutions of the heat equation on the Riemannian manifold.

In this article, we focus on harmonic maps with respect to non-local energy functionals. A typical example of the non-local energy is a fractional Dirichlet energy, which is the energy associated with a fractional Laplacian. 
In a similar way to the classical harmonic maps with respect to the Laplacian, critical points of the fractional Dirichlet energy are called fractional harmonic maps. The notion of fractional harmonic maps was introduced by \cites{DaLioRiv11, DaLioRiv112} and their regularity has been studied in \cites{DaLioRiv11, DaLioRiv112, MazoSchik18, MP20, MPS21, MSire15}.

Our interest in this article lies in the relationship between these harmonic maps and stochastic processes. Recently, harmonic maps valued in Riemannian submanifolds of the higher-dimensional Euclidean space with respect to non-local Dirichlet forms on locally compact separable metric spaces have been characterized through stochastic processes in \cite{Oka24} by employing the notion of discontinuous martingales on manifolds, which was introduced in \cite{Pic91} and further studied in \cites{Pic94, Oka23}.

To investigate how harmonic maps with respect to non-local Dirichlet forms reflect the geometry of manifolds, we impose additional regularity on a harmonic map $u$ and study the stochastic process induced by the derivative of $u$. In this article, instead of directly differentiating a family of martingales $\{u(Z^z)\}_{z \in E}$ with respect to $z$, where $Z^z$ is the Markov process associated with a Dirichlet form starting from $z \in E$, we consider the derivatives with respect to vector fields $\cH$ which generates the infinitesimal symmetries of the Dirichlet form. The stochastic process $\cH u(Z^z_t)$ can formally be regarded as the differential
\[
\left(\frac{d}{d\ep}\right)_{\ep=0}u(\Phi_{\ep}(Z^z_t)),
\]
where $\Phi_{\ep}$ is the flow generated by $\cH$. In this sense, the differential process may be regarded as a stochastic Jacobi field along the image martingale.
Under the assumption for symmetries of the Dirichlet form with respect to $\cH$ described in \thetag{C} later, it is natural to expect that the differential process $\cH u(Z)$ is also a martingale on $TM$ in some sense and it reflects the geometry of $M$. In order to verify that statement, we introduce the notion of discontinuous martingales on tangent bundles. In the continuous case, we need a linear connection on $M$ to define the It\^o integral of 1-forms along continuous semimartingales on $M$. Then continuous martingales on $M$ are defined as continuous semimartingales for which the It\^o integral of each 1-form is a local martingale.
However, as we can see in \cites{Pic91, Pic94, Oka23, Oka24}, a linear connection alone does not naturally specify the definition of discontinuous martingales on manifolds and an additional connection rule is required.

To begin with, we briefly recall the It\^o calculus on manifolds for discontinuous semimartingales and clarify the definition of discontinuous martingales on manifolds. We fix a filtered probability space $(\Omega, \mathcal{F}, \{\mathcal{F}_t\}_{t\geq 0}, \Prob)$ satisfying the usual conditions. First, we assume that $M$ is simply a manifold. The definition of semimartingales on $M$ is simple:
\begin{dfn}
Let $X$ be an $M$-valued \cl $\{\cF_t\}_{t\geq 0}$-adapted process. The process $X$ is called an $M$-valued semimartingale if for all $f\in C^{\infty}(M)$, $f(X)$ is a semimartingale on $\mathbb{R}$.
\end{dfn}
In \cite{Pic91}, the map called connecteur, or connection rule, has been introduced in order to define the It\^o integral of 1-forms for discontinuous semimartingales on manifolds. 
\begin{dfn}
Let $\gamma:M\times M\to TM$ be a Borel measurable map and suppose $\gamma$ is $C^2$ on a neighborhood of $\mathrm{diag}(M)$, that is, there exists an open neighborhood $\mathcal{U} \subset M \times M$ of $\mathrm{diag}(M)$ such that $\gamma|_{\mathcal{U}} \in C^2(\mathcal{U}\, ;TM)$. Then $\gamma$ is called a connection rule if it satisfies the following conditions: For all $x,y\in M$,
\begin{enumerate}
\item[(i)] $\gamma (x,y)\in T_xM$;
\item[(ii)] $\gamma (x,x)=0$;
\item[(iii)] $(d \gamma (x,\cdot))_x=id_{T_xM}.$
\end{enumerate}
\end{dfn}
For a given semimartingale $X$ on $M$ and a connection rule $\gamma$, we can regard jumps of $X$ as $\gamma(X_{t-},X_t) \in T_{X_{t-}}M$ once we make a choice of a connection rule. Then for each smooth 2-tensor field $b$ on $M$, we can define the quadratic variation $\dis \int b(X_-)\, d^{\gamma} [X,X]$ (see \cite{Pic91} or \cite{Oka23} for details). Denote the continuous locally bounded variation part by $\dis \int b(X_-)\, d[X,X]^c$, which is independent of the choice of the connection rule. For an $M$-valued semimartingale $X$, the It\^o integral of $T^*M$-valued predictable processes $\alpha$ with $\alpha_t \in T^*_{X_{t-}}M$ for each $t\geq 0$ a.s., denoted by $\dis \alpha \mapsto \int \alpha \, d^{\gamma}X$, is uniquely defined as a map having the following properties:
\begin{enumerate}
\item For any $\mathbb{R}$-valued predictable process $K$,
\[
\int K \alpha\, d^{\gamma}X=\int K  \, d \left( \int \alpha \, d^{\gamma}X \right).
\]
\item For every $f\in C^2(M)$,
\begin{align*}
\int df(X_-)\, d^{\gamma} X&=f(X)-f(X_0)-\frac{1}{2}\int \nabla df (X_-)\, d[X,X]^c\\
&\h -\sum_{0<s\leq \cdot} \{f(X_s)-f(X_{s-})-\langle df(X_{s-}),\gamma(X_{s-},X_s) \rangle \}.
\end{align*}
\end{enumerate}
\begin{dfn}
Let $\gamma$ be a connection rule on $M$. An $M$-valued semimartingale $X$ is called a $\gamma$-martingale if for any $\alpha \in \Omega^1(M)$, the It\^o integral $\dis \int \alpha(X_-)\, d^{\gamma}X$ is a local martingale.
\end{dfn}
Suppose that the manifold $M$ is a properly embedded Riemannian submanifold of the higher-dimensional Euclidean space $\mathbb{R}^d$ with an isometric embedding of $\iota \colon M \to \mathbb{R}^d$. Then the embedding provides a connection rule $\gamma$ on $M$ defined by
\begin{align}\label{ConneEmbed}
\gamma (x,y) = \Pi_x(y-x),
\end{align}
where $\Pi_x \colon \mathbb{R}^d \to T_xM$ is the orthogonal projection. One significant fact regarding connection rules is that each connection rule induces a torsion-free connection. For example, the connection induced by the connection rule \eqref{ConneEmbed} is the Levi-Civita connection.

Next we determine the notion of martingales on tangent bundles. Throughout this article, we focus on the case where the base manifold $M$ is a properly embedded Riemannian submanifold of the higher-dimensional Euclidean space $\mathbb{R}^d$ and the connection rule $\gamma$ is given by \eqref{ConneEmbed}. We define the connection rule $\gamma^{\bcl} \colon TM \times TM \to TTM$ by
\begin{align*}
\gamma^{\bcl} (\bU,\bV)=(\gamma (x, y), \II(\gamma(x, y), \bU)) + (0,-A_{\Pi_x^{\perp}(y-x)}\bU+ \Pi_x(\bV-\bU))
\end{align*}
for $\bU \in T_xM, \bV \in T_yM$. Here $\Pi_x^{\perp}\colon \mathbb{R}^d \to T_x^{\perp}M$ is the orthogonal projection, $\II \in \Gamma(TM^{\perp}\otimes T^*M \otimes T^*M)$ and $A \in \Gamma(\mathrm{Hom}(TM)\otimes (TM^{\perp})^*)$ are the second fundamental form and the Weingarten map for the embedding $\iota \colon M \to \mathbb{R}^d$ defined by
\begin{align*}
\begin{cases}
D_{\cX}\cY &= \nabla_{\cX}\cY + \II(\cX,\cY),\\
g(A_{\xi}\cX, \cY) &= -\langle \xi, \II(\cX,\cY) \rangle,
\end{cases}
\end{align*}
respectively for $\cX,\cY \in \mathfrak{X}(M)$ and $\xi \in \Gamma(T^{\perp}M)$, where $D$ is the covariant derivative on $\mathbb{R}^d$ and $\Gamma(\cdot)$ stands for the set of sections of vector bundles. See \pref{cpltgamma} for the details of these notions. We will also check that $\gamma^{\bcl}$ is actually a connection rule on $TM$ in \pref{cpltgamma}. Moreover, we will show that the connection rule $\gamma^{\bcl}$ induces the complete lift of the Levi-Civita connection. This means that the notion of $\gamma^{\bcl}$-martingales is an extension of that of continuous martingales on $TM$ with respect to the complete lift of the Levi-Civita connection.\\

Next, we clarify the setting for harmonic maps with respect to non-local Dirichlet forms between Riemannian manifolds. We refer to \cites{FOT, ChenFuku} for the notions of Dirichlet forms but in this paper, we only consider Dirichlet forms on Riemannian manifolds. Let $(E,h)$ be a second-countable Riemannian manifold. We denote the Riemannian volume measure by $\bdm$. Let $(\cE,\cF)$ be a symmetric regular Dirichlet form on $L^2(E;\bdm)$, i.e. $\cF$ is a dense subspace of $L^2(E;\bdm)$, $\cE \colon \cF \times \cF \to \mathbb{R}$ is a symmetric bilinear closed Markovian form and there exists a vector subspace $\cC \subset C_0(E)\cap \cF$, which is called a core of $(\cE,\cF)$, such that $\cC$ is $\cE_1$-dense in $\cF$ and $\|\cdot \|_{\infty}$-dense in $C_0(E)$. Here the inner product $\cE_1$ on $\cF$ is defined by
\[
\cE_1(u,v)=\cE(u,v) + \langle u, v \rangle_{L^2(E;\bdm)}
\]
and denote the induced norm by $\|\cdot \|_{\cE_1}$.
The extended Dirichlet space $\cF_e$ is defined by the set of functions $u$ on $E$ defined $\bdm$-a.e. such that there exists a sequence $\{u_k\}_{k \in \mathbb{N}} \subset \cF$ with
\begin{align*}
\lim_{m,n \to \infty}\cE(u_m-u_n,u_m-u_n)=0,\ \lim_{n\to \infty}u_n = u,\ \bdm \text{-a.e.}
\end{align*}
We denote the quasi-continuous modification of a function $u\in \cF_e$ by $\tilde u$. For an open set $D\subset E$, we set
\begin{align*}
\begin{cases}
\cF^D&:=\{ u \in \cF \mid \tilde u=0\ \text{q.e.}\ \text{on}\ E \backslash D \},\\
\cE^D(u,v)&:=\cE(u,v),\ u,v \in \cF^D,
\end{cases}
\end{align*}
where "q.e." stands for "except for a zero capacity set with respect to $\cE_1$".
Then $(\cE^D, \cF^D)$ is a regular Dirichlet form on $L^2(D;\bdm)$ and its extended Dirichlet space coincides with
\[
\cF_e^D=\{ u \in \cF_e \mid \tilde u=0\ \text{q.e.}\ \text{on}\ E \backslash D \}.
\]
We also consider the localized Dirichlet spaces as follows. We say that a function $u \colon E \to \bbR$ belongs to $\cF^D_{loc}$ if for each relatively compact open set $D_1$ with $\overline{D_1} \subset D$, there exists $u_{D_1} \in \cF^D$ such that $u=u_{D_1}$ a.e. on $D_1$. Each function $u \in \cF^D_{loc}$ also has a quasi-continuous modification on $D$, which we denote by $\tilde{u}$ likewise. We further set
\begin{align*}
\cF^D_{loc}(\mathbb{R}^d)&:=\{ u=(u^1,\dots,u^d) \colon E \to \bbR^d \mid u^i \in \cF^D_{loc}\ \text{for each}\ i=1,\dots,d \},\\
\cF^D_{loc}(M)&:=\{ u\in \cF^D_{loc}(\mathbb{R}^d)\mid u(z) \in M\ \text{a.e.}\ z\in E \},\\
\cF^D_e(\mathbb{R}^d)&:=\{ \phi=(\phi^1,\dots,\phi^d) \colon E \to \bbR^d \mid \phi^i \in \cF^D_e\ \text{for each}\ i=1,\dots,d \},\\
\cF^D(u^*TM)&:=\{ \phi \in \cF^D(\mathbb{R}^d)\mid \phi(z) \in T_{u(z)}M\ \text{a.e.}\ z\in E \}.
\end{align*}
We use the following Beurling-Deny decomposition of $\cE$. For $u,v \in \cF_e$
\begin{align}\label{BDdecom}
\cE(u,v)=\frac{1}{2}\mu_{\langle u,v \rangle}^c(E)+\frac{1}{2}\int_{E \times E \backslash \mathrm{diag}(E)}(\tilde{u}(z)-\tilde{u}(w))(\tilde{v}(z)-\tilde{v}(w))\, \sJ(dzdw)+\int_E \tilde{u}\tilde{v} \, d\sk,
\end{align}
where $\mu_{\langle u,v \rangle}^c$ is the mutual energy measure of the continuous part associated with $u,v$, $\sJ$ is the symmetric jump measure on $E\times E \backslash \mathrm{diag}(E)$ and $\sk$ is the killing measure on $E$.

For a regular Dirichlet form $(\cE,\cF)$, there exists an $\bdm$-symmetric Hunt process
\[
(\Omega,\{Z_t\}_{t\geq 0},\{\theta_t\}_{t\geq0}, \zeta, \{\Prob_z\}_{z\in E_{\varDelta}})
\]
on $E$ satisfying
\[
\frac{1}{t}\langle u-p_tu, u \rangle_{L^2(E;\bdm)} \nearrow \cE(u,u)\ \text{for}\ u\in b\mathcal{B}(E)\cap \cF,
\]
where $\theta \colon \Omega \to \Omega$ is the shift operator, $\zeta$ is the lifetime of $Z$, $\varDelta$ is the cemetery point, $E_{\varDelta}=E \cup \{\varDelta\}$ and $\{p_t\}_{t\geq 0}$ is the transition function of $Z$. We set
\begin{align*}
\mathcal{F}^0_{\infty}&=\sigma(Z_s;\ s< \infty),\\
\mathcal{F}^0_t&=\sigma(Z_s;\ s\leq t).
\end{align*}
For a $\sigma$-finite measure $\mu$, we denote the $\mathbb{P}_{\mu}$-completion of $\mathcal{F}^0_{\infty}$ by $ \mathcal{F}^{\mu}_{\infty}$, where
\[
\mathbb{P}_{\mu}(\Lambda)=\int_{E_{\varDelta}}\mathbb{P}_z(\Lambda)\, \mu(dz),\ \Lambda \in \mathcal{F}^0_{\infty}.
\]
Set
\[
\mathcal{F}^{\mu}_t=\sigma(\mathcal{F}^0_t,\mathcal{N}_{\mu}),
\]
where $\mathcal{N}_{\mu}$ is the family of all $\mathbb{P}_{\mu}$-null sets in $\mathcal{F}^{\mu}_{\infty}$. Denote the set of probability measures on $E_{\varDelta}$ by $\mathcal{P}(E_{\varDelta})$ and let
\[
\mathcal{F}^Z_t=\bigcap_{\mu \in \mathcal{P}(E_{\varDelta})} \mathcal{F}^{\mu}_t,\ t\in [0,\infty].
\]
For an open set, we set the first exit time
\[
\tau_D:=\inf \{ t> 0 \mid Z_t \notin D \}.
\]
In previous articles such as \cites{Chen09, Oka24}, following conditions (A) and (B) for $u\in \cF^D_{loc}\cap L^{\infty}_{loc}(D)$ have been considered: For any relatively compact open set $D_1$, $D_2$ with
\begin{align}\label{opensets}
\overline{D_1}\subset D_2 \subset \overline{D_2} \subset D,
\end{align}
\begin{description}
\item[\thetag{A}]$u$ satisfies $\int_{D_1\times (E \backslash \, D_2)}|u(w)|\, \sJ(dzdw) <\infty$;
\item[\thetag{B}]if we define a function $f_u$ by
\begin{align}
f_u(z):= \mathbf{1}_{D_1}(z)\mathbb{E}_z\left[ ((1-\phi_{D_2})|u|)(Z_{\tau_{D_1}})\right]\ \text{for}\ z\in E,\label{fu}
\end{align}
then $f_u \in \mathcal{F}_e^{D_1}$, where $\phi_{D_2}$ is a function satisfying
\[
\phi_{D_2}\in \mathcal{F}\cap C_0(D),\ 0\leq \phi_{D_2} \leq 1,\ \phi_{D_2}=1\ \text{on}\ D_2.
\]
\end{description}
For vector-valued maps, conditions \thetag{A} and \thetag{B} are understood componentwise.
If $u\in \cF_{loc}^D\cap L^{\infty}_{loc}(D)$ satisfies (A), then for each relatively compact open set $D_1$ with $\overline{D_1} \subset D$ and $\phi \in \cF^{D_1} \cap L^{\infty}(E)$, we can determine the value $\cE(u,\phi)$ as a finite value through the Beurling-Deny decomposition of $(\cE,\cF)$. We will see this fact in detail later in \sref{proof}. Let $\cC$ be a special standard core of $(\cE,\cF)$ and set $\cC_D=\cC \cap C_0(D)$, $\dis \cC_D(\mathbb{R}^d)=\left(\cC_D \right)^d$. For $u \in \cF_{loc}^D(M) \cap L^{\infty}_{loc}(D)$, we consider the class of test functions $\Pi_u \cC_D$ defined by
\[
\Pi_u \cC_D(\mathbb{R}^d):=\{ z \mapsto \Pi_{u(z)}\phi(z) \mid \phi \in \cC_D(\mathbb{R}^d) \}.
\]
Then since we assume that $M$ is properly embedded, for each $\psi \in \Pi_u \cC_D(\mathbb{R}^d)$ and a relatively compact open set $D_1$ with $\mathrm{supp}[\psi] \subset D_1 \subset \overline{D_1} \subset D$, $\psi \in \cF^{D_1}(u^*TM)$. Based on this fact, we define harmonic maps with respect to non-local Dirichlet forms as follows.
\begin{dfn}\label{defharmonic}
We call $u \in \cF^D_{loc}(M)\cap L^{\infty}_{loc}(D)$ satisfying (A) and (B) an $\cE$-harmonic map on $D$ if
\begin{align}\label{harmonicmapeq}
\cE(u,\psi)=0
\end{align}
for all $\psi \in \Pi_u\cC_D(\mathbb{R}^d)$.
\end{dfn}
Viewing \cite{Oka24} and \pref{ELeq}, $\cE$-harmonic maps admit a characterization through stochastic processes. Throughout this article, we assume that the Dirichlet form has no killing part. Therefore, the statement below is a simplified version of the corresponding result in \cite{Oka24}. 
\begin{thm}[cf. \cite{Oka24}]\label{harmonicmartingale}
Let $M$ be a properly embedded Riemannian submanifold of $\mathbb{R}^d$. Let $(\cE,\cF)$ be a regular Dirichlet form on $L^2(E;\bdm)$ without killing term. We assume that a Borel measurable map $u \in \mathcal{F}_{loc}^D(M) \cap L^{\infty}_{loc}(D)$ is quasi-continuous on $D$ and satisfies \thetag{A} and \thetag{B}. Then $u$ is $\cE$-harmonic on $D$ if and only if for each relatively compact open set $D_1$ with $\overline{D_1} \subset D$, the process $u(Z)^{\tau_{D_1}}$ is a $\left(\Prob_z, \{\cF^Z_t\}_{t\geq 0} \right)$-$\gamma$-martingale for q.e. $z\in E$, where the connection rule $\gamma$ is given by \eqref{ConneEmbed}.
\end{thm}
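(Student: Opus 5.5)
The plan is to reduce the $\gamma$-martingale property of $u(Z)^{\tau_{D_1}}$ to a statement about the $\mathbb{R}^d$-valued semimartingale $u(Z)$, and then to identify its drift with the pairing $\cE(u,\cdot)$ by means of the Fukushima decomposition. First, I will test against 1-forms on $M$ of the special type $\alpha=\langle \eta(x),dx\rangle$ restricted to $M$, with $\eta\colon\mathbb{R}^d\to\mathbb{R}^d$ smooth. Since every $\alpha\in\Omega^1(M)$ can be written as $\alpha_x=\langle \Pi_x\eta(x), dx\rangle$ for some smooth $\eta$ (extend $\alpha^\sharp$ to a neighbourhood), it suffices to consider these. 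For $\gamma(x,y)=\Pi_x(y-x)$, the defining properties of $\int\cdot\,d^\gamma X$, applied to the coordinate functions $f=x^i|_M$, give an explicit formula. With $X=u(Z)$ it reads
\[
\int \alpha(X_-)\,d^\gamma X=\int \langle \Pi_{X_-}\eta(X_-), dX\rangle ,
\]
where the right-hand side is an ordinary Euclidean It\^o integral. This holds because $\nabla d x^i=\II^i$, the correction $\sum \{x^i(X_s)-x^i(X_{s-})-\langle dx^i,\Pi(X_s-X_{s-})\rangle\}$ is the normal component of the jump, and the $\Pi$ in front kills both normal terms. I will verify this identity first; it is essentially formal, using linearity in $K$.

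Second, I will write the Fukushima decomposition of each component $u^i(Z)^{\tau_{D_1}}=u^i(Z_0)+M^{[u^i]}+N^{[u^i]}$ under (A) and (B). Here I expect to use the results of \cite{Chen09}: (A) and (B) ensure that $u$ is locally in the domain and that the large jumps outside $D_2$ are integrable, so the decomposition holds on $[0,\tau_{D_1}]$, with $N^{[u]}$ a continuous additive functional of zero energy. Moreover, $N^{[u]}$ is locally of bounded variation exactly when the linear functional $\phi\mapsto\cE(u,\phi)$ is given by a smooth signed measure $\nu$, in which case $N^{[u]}=-A^\nu$. Substituting this into the Euclidean integral, the stochastic integral of $\Pi_{X_-}\eta(X_-)$ against $M^{[u]}$ is a local martingale. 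Hence the $\gamma$-martingale property is equivalent to the statement that $\int\langle\Pi_{u(Z_-)}\eta(u(Z_-)),dN^{[u]}\rangle$ is a local martingale. Since this integral is continuous and predictable, that is equivalent to it vanishing.

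Third, I will translate this vanishing into the harmonic map equation \eqref{harmonicmapeq}. By the Revuz correspondence and the identity between zero-energy functionals and $\cE$ (namely $\mathbb{E}_{h\bdm}$ of $\int f\,dN^{[u]}$ relates to $-\cE(u,fh)$), the process $\int \langle\psi(Z),dN^{[u]}\rangle$ vanishes for all $\psi=\Pi_u\phi$ with $\phi\in\cC_D(\mathbb{R}^d)$ precisely when $\cE(u,\Pi_u\phi)=0$. The direction harmonic $\Rightarrow$ martingale is handled as follows: the equation forces the measure associated with $u$ to be normal-valued, i.e. $\Pi_u\,dN^{[u]}=0$, which can be checked via a localization and approximation of a general predictable integrand $\Pi_{u(Z_-)}\eta(u(Z_-))$ by functions $\Pi_u\phi(Z)$. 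The converse follows by taking expectations and using the Revuz correspondence.

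The main obstacle is the second step. The issue is justifying that $N^{[u]}$ is of bounded variation (or at least that the projected integral $\int\Pi_u\,dN^{[u]}$ makes sense) when $u$ is only in $\cF^D_{loc}\cap L^\infty_{loc}$ and possibly non-integrable at infinity, which is exactly where (A) and (B) enter. I would first handle the truncated map $u\phi_{D_2}$, for which everything lies in $\cF$, and control the difference by the jump term bounded through $f_u$ in \eqref{fu}. Since the theorem is cited from \cite{Oka24} and also relies on \pref{ELeq}, I would lean on those for this technical part.
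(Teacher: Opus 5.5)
Your Steps 1 and 3 are sound and use the same machinery as the paper. For $\gamma(x,y)=\Pi_x(y-x)$ one does have $\int\alpha(X_-)\,d^{\gamma}X=\int\langle\Pi_{X_-}\eta(X_-),dX\rangle_{\delta_{\mathbb{R}^d}}$, because $\Pi^{\perp}_{X_-}dX=\frac12\II(dX^c,dX^c)+\gamma^{\perp}(X_-,X)$; this is the identity used in the proof of \pref{inthorvert}. Passing from $\int\langle\psi(Z),dN^{[v]}\rangle$ to $-\cE(v,l\psi)$ via Nakao's characterization and then invoking \pref{ELeq} is exactly how \lref{Nbv} and the end of the proof of \tref{DiffHarmonic} proceed.

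The gap is the semimartingale property in the direction ``harmonic $\Rightarrow$ $\gamma$-martingale''. Write $u=v+h$ as in \eqref{HarmDecom}. Nothing in Steps 2--3 makes sense until you know that $N^{[v]}$ is locally of bounded variation on $[0,\tau_{D_1}]$. The paper does not prove this here; it imports it as Theorem 3.7 of \cite{Oka24}. If you simply cite that result, your outline reduces to the paper's. The problem is that the mechanism you sketch for it would not work. You attribute the difficulty to possible non-integrability of $u$ at infinity and propose to remove it by passing to $\phi_{D_2}u$. But (A), (B) and \lref{hAB} only handle the jump at $\tau_{D_1}$ and its compensator. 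Even for bounded, compactly supported $w\in\cF$, $N^{[w]}$ is in general not of bounded variation: for Brownian motion on $\mathbb{R}$, $w(B)$ is a semimartingale only if $w$ is a difference of convex functions. So truncation cannot be the mechanism. For the same reason, your inference that $\int\langle\Pi_{u(Z_-)}\eta(u(Z_-)),dN^{[u]}\rangle$ vanishes ``because it is continuous and predictable'' is not valid; Brownian motion is both. You need a continuous local martingale of finite variation, which again presupposes bounded variation of $N^{[v]}$.

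The missing idea is that the constraint $u\in M$ controls the normal directions, which the harmonic map equation does not see. By your own criterion in Step 2, it suffices to show that $\phi\mapsto\cE(v,\phi)$ on $\cC_{D_1}(\mathbb{R}^d)$ is given by a smooth $\mathbb{R}^d$-valued measure. Split $\phi=\Pi_u\phi+\Pi^{\perp}_u\phi$. The tangential part contributes $0$ by \pref{ELeq}. For the normal part, $\cE(v,\Pi^{\perp}_u\phi)=\cE(u,\Pi^{\perp}_u\phi)$ by \eqref{heq}. The chain rule for energy measures applied to $u=\bi(u)$ gives $\Pi^{\perp}_u\,d\mu^c_{\langle u,f\rangle}=0$, and the symmetry of $\sJ$ then yields
\[
\cE(v,\Pi^{\perp}_u\phi)=-\frac12\sum_{a,b}\int_E\langle\phi,\II_u(\Pi_u\ep_a,\Pi_u\ep_b)\rangle_{\delta_{\mathbb{R}^d}}\,d\mu^c_{\langle u^a,u^b\rangle}-\int_{E\times E}\langle\phi(z),\Pi^{\perp}_{u(z)}(u(w)-u(z))\rangle_{\delta_{\mathbb{R}^d}}\,\sJ(dzdw).
\]
This defines a smooth signed measure $\nu$ of finite total variation on $D_1$. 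The reason is that $|\Pi^{\perp}_x(y-x)|\le C(|y-x|^2\land|y-x|)$ for $x$ in a compact set; near the diagonal you use $u\in\cF^D_{loc}$, and away from it you use (A). Hence $N^{[v]}=-A^{\nu}$ on $[0,\tau_{D_1}]$ and $u(Z)^{\tau_{D_1}}$ is a semimartingale. Only then do your Steps 2 and 3 close the argument.

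In the converse direction you should also say why $N^{[v]}$ is of bounded variation. There it follows from the assumed semimartingale property: $N^{[v],\tau_{D_1}}=u(Z)^{\tau_{D_1}}-u(Z_0)-(h(Z)-h(Z_0))^{\tau_{D_1}}-M^{[v],\tau_{D_1}}$ is a continuous semimartingale with vanishing quadratic variation.
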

We also consider the following assumption for the Dirichlet form:
\begin{description}
\item[\thetag{C}] The Dirichlet form $(\cE,\cF)$ has no killing part, i.e. $\sk=0$, and $C_0^{\infty}(E)$ is an operator core of the generator $\cL$ on  $L^2(E;\bdm)$ associated with $(\cE,\cF)$. In addition, there exists a smooth vector field $\cH \in \mathfrak{X}(E)$ such that
\item[\thetag{C-1}] for any $u,v \in L^1_{loc}(E; \bdm)$ with $\cH u, \cH v \in L^1_{loc}(E;\bdm)$ and $uv, (\cH u)v, u (\cH v) \in L^1(E;\bdm)$, it holds that
\[
\int_E (\cH u) v \, d\bdm=-\int_E u (\cH v) \, d\bdm;
\]
\item[\thetag{C-2}]for all $\phi \in C_0^{\infty}(E)$, $\cH \cL \phi \in L^2(E; \bdm)$ and
\[
[\cL, \cH]\phi=0.
\]
\end{description}
Our main theorem is as follows.
\begin{thm}\label{DiffHarmonic}
Let $(E,h)$ be a second countable Riemannian manifold and $D$ an open set in $E$. Let $(M,g)$ be a properly embedded Riemannian submanifold of $\mathbb{R}^d$. Let $(\cE,\cF)$ be a regular Dirichlet form satisfying \thetag{C} for a vector field $\cH \in \mathfrak{X}(E)$.
Let $u \in \cF^D_{loc}(M)\cap L^{\infty}_{loc}(D)$ be an $\cE$-harmonic map on $D$ satisfying \thetag{A} and \thetag{B}.
Let $\cH u$ be the distributional derivative of $u$ with respect to $\cH$.
We assume that
$\cH u$ is in $\cF^{D}_{loc}(\mathbb{R}^d) \cap L^{\infty}_{loc}(D)$ and satisfies conditions \thetag{A} and \thetag{B}. We fix a $TM$-valued Borel measurable quasi-continuous modification of $(u,\cH u)$ and set $\bJ=(u(Z), \cH u(Z))$. Then for each relatively compact open set $D_1$ with $\overline{D_1} \subset D$, the process $\bJ^{\tau_{D_1}}$ is a $\left(\Prob_z, \{\cF^Z_t\}_{t\geq 0} \right)$-$\gamma^{\bcl}$-martingale for q.e. $z \in E$. In particular, $\bJ^{\tau_{D_1}}$ is a $TM$-valued semimartingale.
\end{thm}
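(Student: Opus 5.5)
Our plan has two parts: a stochastic characterization of $\gamma^{\bcl}$-martingales in the spirit of \tref{harmonicmartingale}, and an Euler--Lagrange type equation for $\cH u$ obtained by differentiating \eqref{harmonicmapeq} along $\cH$.

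\textbf{Step 1 (characterization).} First we establish a characterization analogous to \tref{harmonicmartingale} for $TM$-valued processes. Write $\bJ=(X,V)$ with $X=u(Z)$ and $V=\cH u(Z)$, both viewed in $\mathbb{R}^d\times\mathbb{R}^d$. Using the Itô formula for $\gamma^{\bcl}$ (the Itô calculus on $TM$ to be developed, with $TM\subset\mathbb{R}^{2d}$ properly embedded), $\bJ$ is a $\gamma^{\bcl}$-martingale if and only if two conditions hold:
\begin{enumerate}
\item[(a)] $\int\Pi_{X_-}\,dX$ is a local martingale. This is equivalent to $X$ being a $\gamma$-martingale.
\item[(b)] The process
\[
\Pi_{X_-}\Bigl(V-V_0-\int \II(dX^c,V_-)-\sum\bigl[\II(\gamma(X_{s-},X_s),V_{s-})-A_{\Pi^\perp_{X_{s-}}\Delta X_s}V_{s-}\bigr]\Bigr)
\]
is a local martingale, where everything is understood as the vertical component of $\gamma^{\bcl}$ integrated against $d^{\gamma^{\bcl}}\bJ$.
\end{enumerate}
Concretely, we would test against the 1-forms $d(x^i)$ (lifted) and $d(v^i)$ on $TM$ and expand each Itô integral with the defining property (2), rewriting in terms of the Euclidean coordinates. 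Condition (a) follows from \tref{harmonicmartingale}, since $u$ is $\cE$-harmonic.

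\textbf{Step 2 (fixing the martingale part).} Since $\cH u$ satisfies (A) and (B), the Fukushima decomposition for $\cF^{D}_{loc}$ functions applies to $V^{\tau_{D_1}}$. This gives $V=V_0+M^{[V]}+N^{[V]}$, and $N^{[V]}$ is characterized via the functional $\phi\mapsto\cE(\cH u,\phi)$, exactly as in the proof of \tref{harmonicmartingale}; this also gives the semimartingale property. Condition (b) then reduces to an analytic identity: for all $\phi\in\cC_{D_1}(\mathbb{R}^d)$,
\[
\cE(\cH u,\Pi_u\phi)=\text{(explicit terms involving } \II,\ A,\ \mu^c,\ \sJ\text{)},
\]
namely the weak form of the Jacobi equation. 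Its jump part is
\[
\tfrac12\iint \bigl\langle \Pi_{u(z)}\phi(z),\,\II(\Pi_{u(z)}(u(w)-u(z)),\cH u(z))-A_{\Pi^\perp_{u(z)}(u(w)-u(z))}\cH u(z)\bigr\rangle\,\sJ(dzdw)
\]
plus the analogous continuous part.

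\textbf{Step 3 (deriving the Jacobi equation).} By \pref{ELeq}, harmonicity means $\cL u\perp T_uM$ in the weak sense: $\cE(u,\Pi_u\phi)=0$. Using (C-1) and (C-2), we would replace the test function $\Pi_u\phi$ by $\cH(\Pi_u\phi)$, which is again approximable by admissible test functions via mollification along the flow of $\cH$. The commutation $[\cL,\cH]=0$ together with the antisymmetry of $\cH$ then gives $\cE(\cH u,\psi)=-\cE(u,\cH\psi)$. Expanding $\cH(\Pi_u\phi)=(d\Pi)_u(\cH u)\phi+\Pi_u\cH\phi$, the second term's pairing $\cE(u,\Pi_u\cH\phi)$ vanishes by harmonicity. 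The first term involves $d\Pi$, which is expressed via $\II$ and $A$. Using the Beurling--Deny decomposition \eqref{BDdecom}, one then matches $\cE(u,(d\Pi)_u(\cH u)\phi)$ with the $\II/A$ terms in Step 2. The nontrivial part is the jump part: the symmetrized difference $(u(z)-u(w))((d\Pi)_{u(z)}\cH u(z)\phi(z)-\cdots)$ must be rewritten, using $\Pi_x(y-x)$ and $\Pi_x^\perp(y-x)$, into exactly the $\II$ and $A$ terms in $\gamma^{\bcl}$.

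\textbf{Main obstacle.} We expect Step 3 to be the hardest part. It requires justifying the differentiation of the energy identity under only the regularity $\cH u\in\cF^D_{loc}\cap L^\infty_{loc}$ with (A) and (B), including integrability of the nonlocal tails, since (C-1) needs global $L^1$ conditions and $u$ is only local. We would handle this first by cutting off with $\phi_{D_2}$ and controlling the tail terms with (A). The second difficulty is verifying the algebraic identity that converts $d\Pi$-terms into the $\II/A$ form dictated by $\gamma^{\bcl}$ pointwise on $\sJ$.
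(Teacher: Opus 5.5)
Your Step 3 follows the paper's own argument. \lref{DomGen} controls the nonlocal tails, \lref{E1dense} supplies the mollification along the flow of $\cH$, and \lref{HInvariance} gives $\cE(\cH u,\psi)=-\cE(u,\cH\psi)$. In $(d\Pi)_u(\cH u)\phi=\II(\cH u,\Pi_u\phi)-A_{\Pi_u^{\perp}\phi}\cH u$ the second term is tangent, so \pref{ELeq} leaves only $\cE(\cH u,\Pi_u\phi)=-\cE(u,\II(\Pi_u\phi,\cH u))$; this is the computation in \lref{Nbv}.

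\textbf{The gap: the semimartingale property.} In Step 2 you say the Fukushima decomposition ``also gives the semimartingale property''. It does not. The zero-energy part of the decomposition of $\cH u$ is only a continuous additive functional of zero energy, not of bounded variation in general. Since $\cH u$ is neither $M$-valued nor harmonic, the argument behind \tref{harmonicmartingale} does not apply directly. Bounded variation has to come from the Jacobi identity, and that identity only controls tangential components. The paper closes this as follows:
\begin{enumerate}
\item It uses the decomposition \eqref{HarmDecom} of $\bu=(u,\cH u)$ and Nakao's integral to turn the analytic identity into the additive-functional identity $\int\langle\cX\circ u(Z),dN^{[\bv_2]}\rangle=-\int\langle\II(\cX\circ u,\cH u)(Z),dN^{[v]}\rangle$ on $[0,\tau_{D_1}]$.
\item The right-hand side has bounded variation because $u(Z)^{\tau_{D_1}}$ is a semimartingale.
\item It relates $N^{[\bv_2]}$ to $N^{[\phi\cH u]}$ through the compensator $\bB^{(2)}$.
\item It applies the It\^o formula for Dirichlet processes to $\bar{\bdf}(\phi\bu(Z))$, where $\bar{\bdf}$ is the $\delta^{\bcl}_{\mathbb{R}^d}$-normal extension of $\bdf\in C^{\infty}(TM)$. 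Since $D^{\bcl}\bar{\bdf}$ is tangent to $TM$, it is a combination of complete and vertical lifts. Hence only the tangential part of $N^{[\phi\cH u]}$ and the full $N^{[\phi u]}$ ever appear.
\end{enumerate}
You need this, or an equivalent treatment of the normal component, before Step 1 can be used at all, because $\gamma^{\bcl}$-It\^o integrals presuppose that $\bJ$ is a semimartingale. The order must be: analytic identity, then additive-functional identity, then semimartingale property, then martingale property.

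\textbf{The characterization in Step 1.} Your condition (b) is not correct as written. $\Pi_{X_-}$ annihilates both $\II$-terms, the $A$-jump term has the wrong sign, and the continuous term $-\frac12\int A_{\II(dX^c,dX^c)}V_-$ is missing. By \cref{martveccondition} and \pref{inthorvert}, what must be a local martingale for every $\cX\in\mathfrak{X}(M)$ is
$\int\langle\cX(X_-),dJ\rangle+\frac12\int\langle\II(\cX(X_-),J_-),\II(dX^c,dX^c)\rangle+\sum\langle\II(\cX(X_-),J_{s-}),\gamma^{\perp}(X_{s-},X_s)\rangle$.
The last two terms are exactly the normal drift of the $M$-valued process $X$, so this process equals $\int\langle\cX(X_-),dJ\rangle+\int\langle\II(\cX(X_-),J_-),dX\rangle$. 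Its drift is $\int\langle\cX\circ u(Z),dN^{[\bv_2]}\rangle+\int\langle\II(\cX\circ u,\cH u)(Z),dN^{[v]}\rangle$, which vanishes by the identity above.

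So the pointwise matching of Beurling--Deny jump integrals against $\II$ and $A$, which you name as the main difficulty, is unnecessary: $dN^{[v]}$ already carries both the continuous and the jump normal drift of $u(Z)$. If you do the matching anyway, the jump term is $\iint\langle\II(\Pi_{u(z)}\phi(z),\cH u(z)),\gamma^{\perp}(u(z),u(w))\rangle\,\sJ(dzdw)$, without your factor $\frac12$. It converges absolutely because $|\gamma^{\perp}(x,y)|=O(|y-x|^2)$ locally, with the tails handled by \thetag{A}.
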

\begin{rem}
Under the assumption of \tref{DiffHarmonic}, $\cH u$ satisfies $\cH u=\Pi_u \cH u$, $\bdm$-a.e. on $D$. Hence its quasi-continuous modification satisfies $\cH u=\Pi_u \cH u$ q.e. on $D$. Therefore, we can take a $TM$-valued Borel measurable quasi-continuous modification of $(u,\cH u)$.
\end{rem}
We also introduce stochastic parallel transport of tangent vectors along $M$-valued \cl semimartingales that is compatible with the connection rule $\gamma$. Using this parallel transport, we linearize $\gamma^{\bcl}$-martingales and obtain a non-local mean-value property of the differential of harmonic maps in \tref{MeanValue}.

We further obtain the derivative formula for harmonic maps associated with isotropic L\'evy processes on compact Riemannian manifolds having a nonzero Brownian component in \tref{diffLevy}. This differential formula is a non-local counterpart of Theorem 5.3 in \cite{AT98_2}, which gives a derivative formula of harmonic map heat flows involving the Ricci curvature of the domain manifold and the curvature tensor of the target manifold.
Isotropic L\'evy processes on Riemannian manifolds were constructed in \cite{AppleEst00} and the integration-by-parts formulas for those L\'evy processes were studied in \cites{KaiTake21, KaiTake212}. \tref{diffLevy} in this article requires a nonzero Brownian component and therefore does not cover the pure jump case. In addition, we also restrict the differentiation direction to vector fields in \thetag{C} on the domain. Consequently, our formula is not a direct extension of the corresponding differential formulas for scalar-valued functions. Nevertheless, the contribution of jumps can be written explicitly in terms of the second fundamental form on the target manifold.\\

We give an outline of the paper. In \sref{mtgltan}, we recall some basic notions and facts regarding the differential geometry of tangent bundles. In particular, we will focus on the notions of complete lifts of tensors and connections. We refer to \cite{YanoIshi73} for details. We also construct the connection rule $\gamma^{\bcl}$ and the corresponding It\^o calculus for discontinuous semimartingales on $TM$. In \sref{proof}, we prove \tref{DiffHarmonic}. \sref{Examples} provides some examples satisfying the assumption for \tref{DiffHarmonic}. In \sref{SectionParallel}, we introduce the stochastic parallel transport along discontinuous semimartingales on manifolds with projected jumps. Using the parallel transport, we also obtain the linearized local martingale identity for differentials of harmonic maps and a differential formula of harmonic maps for isotropic L\'evy processes on compact Riemannian manifolds.\\

Throughout this article, for $a, b \in \mathbb{R}$, we abbreviate $\max \{a,\, b\}$ and $\min \{a,\, b \}$ as $a\lor b$ and $a \land b$, respectively. For a stochastic process $H$ and a stopping time $\tau$, we write the stopped process as $H^{\tau}$ defined by
\[
H^{\tau}_t(\omega)=H_{t\land \tau (\omega)}(\omega).
\]
For a \cl process $X$, we denote by $X_-$ the process obtained by taking the left-limit of $X$, namely,
\[
X_{t-}(\omega)=\lim_{s\nearrow t}X_s(\omega).
\]
Given a topological space $M$, we denote by $C_0(M)$ the set of all continuous functions on $M$ with compact support. In the case that $M$ is a manifold, we denote by $C_0^{\infty}(M)$ the set of all $C^{\infty}$ functions with compact support.

We use the following notational conventions:
\begin{itemize}
\item $x,y$ denote elements in $M$, which is a target manifold;
\item $\bU, \bV$ denote elements in $TM$;
\item $\cX, \cY$ denote vector fields on $M$;
\item $\wilde{\bU}, \wilde{\bV}$ denote elements in $TTM$;
\item $z, w$ denote elements in $E$, which is a domain manifold.
\end{itemize}

\section{It\^o calculus for discontinuous semimartingales on tangent bundles}\label{mtgltan}
In this section, we introduce a connection rule on tangent bundles and formulate the It\^o calculus. To begin with, we recall some basic notions regarding tangent bundles. Let $M$ be an $n$-dimensional manifold. We denote the canonical projections of $TM$ and $TTM$ by
\[
\pi_{TM} \colon TM \to M,\ \pi_{TTM} \colon TTM \to TM,
\]
respectively. For any smooth curve $\bU(t)$ in $TM$, we can take a smooth map $(s,t)\mapsto x(s,t)\in M$ in such a way that $\bU(t)$ can be represented as
\[
\bU(t)=\left( \frac{\partial}{\partial s} \right)_{s=0} x(s,t).
\]
Thus every tangent vector in $TTM$ can be written as $\dis \left( \frac{\partial}{\partial t} \right)_{t=0}\left( \frac{\partial}{\partial s} \right)_{s=0} x(s,t)$. Then
\begin{align*}
\pi_{TTM}(\bU'(0))&=\bU(0)\\
&=\left( \frac{d}{d s} \right)_{s=0} x(s,0),\\
d\pi_{TM}\left( \bU'(0) \right) &= \left( \frac{\partial}{\partial t} \right)_{t=0} \pi_{TM} \left( \left( \frac{\partial}{\partial s} \right)_{s=0} x(s,t) \right) \\
&= \left( \frac{d}{d t}\right)_{t=0}x(0,t).
\end{align*}
Note that $d\pi_{TM} \colon TTM \to TM$ is also a vector bundle and if we define $s_M \colon TTM \to TTM$ by
\[
s_M \left( \left( \frac{\partial}{\partial t} \right)_{t=0}\left( \frac{\partial}{\partial s} \right)_{s=0} x(s,t) \right) = \left( \frac{\partial}{\partial s} \right)_{s=0}\left( \frac{\partial}{\partial t} \right)_{t=0}x(s,t),
\]
then $s_M$ is a bundle isomorphism between $\pi_{TTM}\colon TTM \to TM$ and $d\pi_{TM}\colon TTM \to TM$ satisfying
\[
\pi_{TTM} \circ s_M=d\pi_{TM},\ d\pi_{TM}\circ s_M = \pi_{TTM},\ s_M^2=\mathrm{id}_{TTM}.
\]

Next we recall the complete and vertical lifts of tensors. For a function $f\in C^{\infty}(M)$, the complete lift $f^{\bcl}\in C^{\infty}(TM)$ is defined by $f^{\bcl}(\bU)=\langle df, \bU \rangle$ for $\bU \in TM$. On the other hand, the vertical lift of a smooth function $f \in C^{\infty}(M)$ is defined by $f^{\bvl}:=\pi_{TM}^*f$. Then for a vector field $\mathcal{X} \in \mathfrak{X}(M)$, the complete lift $\mathcal{X}^{\bcl} \in \mathfrak{X}(TM)$ is uniquely determined as a vector field satisfying
\[
\mathcal{X}^{\bcl}f^{\bcl}=(\mathcal{X}f)^{\bcl},\ \mathcal{X}^{\bcl}f^{\bvl}=(\cX f)^{\bvl}
\]
for any $f\in C^{\infty}(M)$. The vertical lift of a vector field $\cX \in \mathfrak{X}(M)$ is defined through
\[
\cX^{\bvl}f^{\bcl} = (\cX f)^{\bvl},\ \cX^{\bvl}f^{\bvl} = 0
\]
for any $f\in C^{\infty}(M)$. For a 1-form $\alpha \in \Omega^1(M)$, the complete lift $\alpha^{\bcl} \in \Omega^1(TM)$ is defined through
\[
\langle \alpha^{\bcl}, \mathcal{X}^{\bcl} \rangle=\langle \alpha, \mathcal{X} \rangle^{\bcl},\ \langle \alpha^{\bcl}, \mathcal{X}^{\bvl} \rangle=\langle \alpha, \mathcal{X} \rangle^{\bvl},
\]
for all $\mathcal{X} \in \mathfrak{X}(M)$.
The vertical lift of a 1-form $\alpha \in \Omega^1(M)$ is defined through
\[
\langle \alpha^{\bvl}, \cX^{\bvl} \rangle=0,\ \langle \alpha^{\bvl}, \cX^{\bcl} \rangle = \langle \alpha, \cX \rangle^{\bvl}
\]
for all $\cX \in \mathfrak{X}(M)$.
Then the complete lift of 2-tensors can be defined through
\[
(\alpha \otimes \beta)^{\bcl}=\alpha^{\bcl} \otimes \beta^{\bvl}+ \alpha^{\bvl} \otimes \beta^{\bcl}
\]
for $\alpha, \beta \in \Omega^1(M)$.

Hereafter, we suppose that $(M,g)$ is a Riemannian submanifold of the higher-dimensional Euclidean space $\mathbb{R}^d$ again. Continuing from the previous section, we denote the isometric embedding of $M$ by $\iota \colon M \to \mathbb{R}^d$. Then $TM$ and $TTM$ are regarded as submanifolds of the higher-dimensional Euclidean spaces by the embedding $\iota_* \colon TM \to \mathbb{R}^d\times \mathbb{R}^d$ and $\iota_{**} \colon TTM \to \mathbb{R}^d \times \mathbb{R}^d \times \mathbb{R}^d \times \mathbb{R}^d$, respectively. Here the symbol $*$ stands for the differential. Let $\pi_i \colon \mathbb{R}^d \times \mathbb{R}^d \to \mathbb{R}^d$ ($i=1,2$) be the projection to the $i$-th $\mathbb{R}^d$-component. In the same way, for $i=1,2,3,4$, let $\tilde{\pi}_i \colon \mathbb{R}^d \times \mathbb{R}^d \times \mathbb{R}^d \times \mathbb{R}^d \to \mathbb{R}^d$ be the projection onto the $i$-th $\mathbb{R}^d$-component.
We set the vertical subspace in $T_{\bU}TM$ as $V_{\bU}M:= \mathrm{ker}\, d\pi_{\bU}$. Then $V_{\bU}M$ can be naturally identified with $T_{\pi_{TM}\bU}M$. In general, for a given connection $\nabla$ on $M$, we can determine the horizontal subspace $H_{\bU}M$ in $T_{\bU}TM$ for each $\bU \in TM$ as follows:
\[
H_{\bU}:=\{ \bU'(0) \mid \bU(t)\ \text{is a smooth curve on}\ TM\ \text{with}\ \bU(0)=\bU,\ \nabla_{\frac{d}{dt}}\bU(t)=0 \}.
\]
Then every vector $\wilde{\bU} \in T_uTM$ can be uniquely decomposed as $\wilde{\bU}=\wilde{\bU}^H+\wilde{\bU}^V$, where $\wilde{\bU}^H\in H_{\bU}M$, $\wilde{\bU}^V\in V_{\bU}M$. Define the map $K\colon TTM \to TM$ by $K(\wilde{\bU})=\wilde{\bU}^V$. Here we identify $V_{\bU}M$ with $T_{\pi_{TM}(\bU)}M$. Then for every smooth curve $\bU(t)$ on $TM$, it holds that
\[
K(\bU'(0))=\nabla_{\frac{d}{dt}}\bU(0).
\]
Now since $M$ is embedded in $\mathbb{R}^d$, the horizontal and vertical components of $\wilde{\bU} \in T_{\bU}TM$ can be written as
\begin{align*}
\wilde{\bU}^H&=(x'(0),\II(x'(0),\bU)),\\
\wilde{\bU}^V&=K(\bU'(0))\\
&=(0,\nabla_{\frac{d}{dt}}\bU(0))\\
&=(0,\Pi_x\tilde \pi_4\bU'(0))\\
&=(0,\Pi_x\tilde \pi_4\wilde{\bU}).
\end{align*}
Thus the horizontal-vertical decomposition of $\wilde{\bU}$ can be written as
\begin{align}\label{TTMdecom}
\wilde{\bU}=(x'(0),\II(x'(0),\bU))+(0, \Pi_x\tilde \pi_4\wilde{\bU}).
\end{align}
We denote the vertical and horizontal lifts of tangent vectors by $v_{\bU}, h^{\nabla}_{\bU} \colon T_xM \to T_{\bU}TM$ for $\bU \in T_xM$. Then for a vector field $\cX \in \mathfrak{X}(M)$ and $\bU \in TM$, it holds that
\begin{align}\label{clhor}
v_{\bU}(\cX)&=\cX^{\bvl}(\bU),\nonumber\\
\cX^{\bcl}(\bU)&=h^{\nabla}_{\bU}(\cX)+v_{\bU}(\nabla_{\bU}\cX).
\end{align}

Let $g^{\bcl}$ be the complete lift of the Riemannian metric $g$. Then $g^{\bcl}$ is a pseudo-Riemannian metric on $TM$. Moreover, the complete lift $\nabla^{\bcl}$ of the Levi-Civita connection $\nabla$ is also the Levi-Civita connection associated with $g^{\bcl}$. In addition, the differential map $\iota_* \colon TM \to \mathbb{R}^d \times \mathbb{R}^d$ is an isometric embedding with respect to the metric $\delta_{\mathbb{R}^d}^{\bcl}$, which is the complete lift of the Euclidean metric $\delta_{\mathbb{R}^d}$ on $\mathbb{R}^d$, specified by
\[
\langle (\ba_1,\ba_2), (\bb_1,\bb_2) \rangle_{\delta_{\mathbb{R}^d}^{\bcl}}=\langle \ba_1, \bb_2 \rangle_{\delta_{\mathbb{R}^d}} + \langle \ba_2, \bb_1 \rangle_{\delta_{\mathbb{R}^d}},\ \ba_1,\ba_2,\bb_1,\bb_2 \in \mathbb{R}^d.
\]
Indeed, it holds that
\begin{align*}
(\iota_*)^*\delta_{\mathbb{R}^d}^{\bcl}&=(\iota^*\delta_{\mathbb{R}^d})^{\bcl}=g^{\bcl}.
\end{align*}
In order to define the notion of discontinuous martingales on tangent bundles, we need a connection rule on $TM$. \pref{cpltgamma} below provides us with a connection rule on the tangent bundle which is naturally obtained from the connection rule $\gamma$ on $M$ defined by \eqref{ConneEmbed}.

\begin{prop}\label{cpltgamma}
Define the map $\gamma^{\bcl} \colon TM\times TM \to TTM$ by
\[
\gamma^{\bcl} (\bU,\bV):=s_M(d\gamma (\bU,\bV)).
\]
Then for $\bU \in T_xM,\bV \in T_yM$,
\begin{align}\label{conntan}
\gamma^{\bcl} (\bU,\bV)=(\gamma (x, y), \II(\gamma(x,y),\bU)) + (0,-A_{\Pi_x^{\perp}(y-x)}\bU+ \Pi_x(\bV-\bU)))
\end{align}
and $\gamma^{\bcl}$ is a connection rule on $TM$.
\end{prop}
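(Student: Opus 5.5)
Let me compute $d\gamma$ directly using the embedding. Take $\bU\in T_xM$ and $\bV\in T_yM$, and write $\bU=\partial_s x(s)|_{s=0}$ and $\bV=\partial_s y(s)|_{s=0}$ for curves $x(s),y(s)$ in $M$. Then $\gamma(x(s),y(s))=\Pi_{x(s)}(y(s)-x(s))$ is a curve in $TM\subset\mathbb{R}^d\times\mathbb{R}^d$. Its derivative, viewed as an element of $\mathbb{R}^{4d}$, is
\[
d\gamma(\bU,\bV)=\bigl(x,\ \Pi_x(y-x),\ \bU,\ (D_{\bU}\Pi)(y-x)+\Pi_x(\bV-\bU)\bigr).
\]
The flip $s_M$ exchanges the second and third slots in the embedded picture. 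Indeed, for $x(s,t)$ we have $\iota_{**}$ of $\partial_t\partial_s x$ equal to $(x,\partial_s x,\partial_t x,\partial_t\partial_s x)$, and the mixed second derivative is symmetric. Hence
\[
s_M(d\gamma(\bU,\bV))=\bigl(x,\ \bU,\ \Pi_x(y-x),\ (D_{\bU}\Pi)(y-x)+\Pi_x(\bV-\bU)\bigr),
\]
which is a vector in $T_{\bU}TM$ with base direction $\gamma(x,y)$.

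Next I would decompose this vector using \eqref{TTMdecom}. The horizontal part is $(\gamma(x,y),\II(\gamma(x,y),\bU))$, and the vertical part is $(0,\Pi_x\tilde\pi_4)$. So the remaining task is to compute $\Pi_x(D_{\bU}\Pi)(y-x)$, and for this I split $y-x=\xi^\top+\xi^\perp$ into tangent and normal parts at $x$. Differentiating $\Pi^2=\Pi$ gives $\Pi(D\Pi)\Pi=0$ and $\Pi^\perp(D\Pi)\Pi^\perp=0$. Therefore $\Pi_x(D_{\bU}\Pi)\xi^\top=0$.

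For the normal part, I extend $\xi^\perp$ to a normal field $N$. Since $\Pi N=0$, we get $(D_{\bU}\Pi)N=-\Pi D_{\bU}N$. The Weingarten formula gives $\Pi D_{\bU}N=-A_N\bU$, so $\Pi_x(D_{\bU}\Pi)\xi^\perp=A_{\xi^\perp}\bU$. Now check the sign against the conventions: with $g(A_\xi X,Y)=-\langle\xi,\II(X,Y)\rangle$, we have $\langle D_XN,Y\rangle=-\langle N,D_XY\rangle=-\langle N,\II(X,Y)\rangle=g(A_NX,Y)$, so $\Pi D_XN=+A_NX$. Hence $\Pi_x(D_{\bU}\Pi)\xi^\perp=-A_{\xi^\perp}\bU$. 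This is the sign appearing in \eqref{conntan}: the vertical part is $-A_{\Pi^\perp_x(y-x)}\bU+\Pi_x(\bV-\bU)$. Adding the horizontal and vertical parts gives the decomposition \eqref{conntan}, since the horizontal lift is determined by $\gamma(x,y)$.

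Finally I verify the connection rule axioms. Measurability is clear. Smoothness near the diagonal holds because $\Pi$ is smooth on $M$, so $\gamma$ is smooth on all of $M\times M$ and hence $\gamma^{\bcl}$ is too. For (i), $\gamma^{\bcl}(\bU,\bV)\in T_{\bU}TM$ by construction. For (ii), setting $\bV=\bU$ (so $y=x$) makes every term vanish. For (iii), fix $\bU$ and differentiate $\bV\mapsto\gamma^{\bcl}(\bU,\bV)$ at $\bV=\bU$. Since $\gamma$ is itself a connection rule and the construction is natural, the cleanest route is the following. For $\wilde{\bV}=\partial_t \bV(t)$ with $\bV(0)=\bU$, write $\bV(t)=\partial_s y(s,t)$ with $y(s,0)=x(s)$, where $x(s)$ represents $\bU$. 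Then $\gamma^{\bcl}(\bU,\bV(t))=s_M\,\partial_s\gamma(x(s),y(s,t))|_{s=0}$. Differentiating in $t$ at $0$ and commuting derivatives, condition (iii) for $\gamma$ gives $\partial_t\gamma(x(s),y(s,t))|_{t=0}=\partial_t y(s,t)|_{t=0}$. Here the identification $T_0(T_xM)=T_xM$ is used, and the flip $s_M$ intertwines the resulting canonical identifications, which yields $\wilde{\bV}$.

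The main obstacle is the bookkeeping of identifications in this last step. As a fallback, I would check (iii) directly from \eqref{conntan} in embedded coordinates. Take $\bV(t)$ with $y(t)=y'$, $y'(0)=a\in T_xM$, and $\bV'(0)=(a,b)$ with $\Pi_x^\perp b=\II(a,\bU)$. Differentiating \eqref{conntan} at $t=0$: the horizontal part gives $(a,\II(a,\bU))$, the $A$-term gives $-A_{\Pi^\perp a}\bU=0$ since $a$ is tangent, and the last term gives $(0,\Pi_x b)$. Summing yields $(a,b)$, which is exactly $\wilde{\bV}$ under the vertical identification of $T_{\bU}(TM)$.
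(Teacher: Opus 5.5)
Your proposal is correct and follows the paper's proof. You compute $d\gamma$ in the embedding and express the derivative of $x\mapsto \Pi_x$ through $\II$ and $A$. You get there via $\Pi(D\Pi)\Pi=0$, the Weingarten relation and \eqref{TTMdecom} rather than the paper's parallel orthonormal frame, and you land on the correct sign $-A_{\Pi_x^{\perp}(y-x)}\bU$ for the paper's convention. Your fallback check of (iii), differentiating \eqref{conntan} using $\Pi_x^{\perp}x'(0)=0$ and \eqref{TTMdecom}, is exactly the paper's argument.

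In a final write-up, delete the initial wrong-sign Weingarten line. Also drop or fully carry out the ``naturality'' route for (iii): as written, ``$s_M$ intertwines the identifications'' is only a sketch, and the fallback already settles (iii).
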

\begin{proof}
Let $x(\ep)$ and $y(\ep)$ be curves with $x(0)=x$, $x'(0)=\bU$, $y(0)=y$, $y'(0)=\bV$. It holds that
\begin{align*}
d\gamma (\bU,\bV)&=\left( \frac{d}{d \ep} \right)_{\ep=0}  \Pi_{x(\ep)}(y(\ep)-x(\ep))\\
&=\left( \frac{d}{d \ep} \right)_{\ep=0}  \Pi_{x(\ep)}(y-x)+\Pi_x(\bV-\bU).
\end{align*}
We check that it holds that
\begin{align*}
\left( \frac{d}{d \ep} \right)_{\ep=0}  \Pi_{x(\ep)}\bW&=\II(\bU,\bW)\ \text{for}\ \bW \in T_xM,\\
\left( \frac{d}{d \ep} \right)_{\ep=0}  \Pi_{x(\ep)}\xi&=-A_{\xi}\bU \ \text{for}\ \xi \in T_x^{\perp}M.
\end{align*}
Let $\{e_i\}_{i=1}^n$ be an orthonormal basis on $T_xM$ and $e_i(\ep)$ the parallel displacement of $e_i$ along $x(\ep)$ with respect to the Levi-Civita connection $\nabla$. Then for $\bW \in T_xM$,
\begin{align*}
\left( \frac{d}{d \ep} \right)_{\ep=0}  \Pi_{x(\ep)}\bW&=\left( \frac{d}{d \ep} \right)_{\ep=0}\sum_{i=1}^n g(e_i(\ep),\bW ) e_i(\ep) \\
&=\sum_{i=1}^n\left(\langle \II(e_i,\bU), \bW \rangle_{\delta_{\mathbb{R}^d}} e_i + g( e_i,\bW ) \II(e_i,\bU)  \right)\\
&=\II(\bU,\bW).
\end{align*}
Here we used $\dis \left(\frac{d}{d\ep}\right)_{\ep=0}e_i(\ep)=\II(e_i,\bU)$. In a similar way, for $\xi \in T_x^{\perp}M$, we have
\begin{align*}
\left( \frac{d}{d \ep} \right)_{\ep=0}  \Pi_{x(\ep)}\xi&=\left( \frac{d}{d \ep} \right)_{\ep=0}\sum_{i=1}^n\langle e_i(\ep),\xi \rangle_{\delta_{\mathbb{R}^d}} e_i(\ep) \\
&=\sum_{i=1}^n\langle \II(e_i,\bU), \xi \rangle_{\delta_{\mathbb{R}^d}} e_i \\
&=-\sum_{i=1}^n g(e_i, A_{\xi}\bU) e_i \\
&=-A_{\xi}\bU.
\end{align*}
Therefore, we obtain \eqref{conntan}. Next we check that $\gamma^{\bcl}$ satisfies the conditions in the definition of connection rules. Obviously $\gamma^{\bcl}$ is a smooth map on $TM \times TM$ satisfying (i) and (ii). We will check the condition (iii). Let $\wilde{\bV} \in T_{\bU}TM$ and $\bU(t)$ a smooth curve on $TM$ with $\bU(0)=\bU$, $\bU'(0)=\wilde{\bV}$, $\pi_{TM}\bU(t)=x(t)$. Then
\begin{align*}
d \gamma^{\bcl} (\bU,\cdot)_{\bU} \wilde{\bV} &= \left( \frac{d}{dt}\right)_{t=0} \gamma^{\bcl} (\bU,\bU(t))\\
&=\left( \frac{d}{dt}\right)_{t=0} (\gamma (\pi_{TM} \bU,\pi_{TM} \bU(t)),\\
&\h \II(\gamma(\pi_{TM} \bU,\pi_{TM} \bU(t)),\bU)-A_{\Pi_x^{\perp}(x(t)-x)}\bU+ \Pi_x(\pi_2 \bU(t)-\pi_2 \bU)))\\
&=(x'(0),\II_x (x'(0),\bU)-A_{\Pi_x^{\perp}x'(0)}\bU+\Pi_x\tilde \pi_4\wilde{\bV})\\
&=\wilde{\bV},
\end{align*}
where we used $\Pi_x^{\perp}x'(0)=0$ and \eqref{TTMdecom} in the last equality. Therefore, we have $d\gamma^{\bcl}(\bU,\cdot)_{\bU}=id_{T_{\bU}TM}$.
\end{proof}

\begin{rem}
The torsion-free connection $\widetilde \nabla$ on $TM$ induced by the connection rule $\gamma^{\bcl}$ is the complete lift of the Levi-Civita connection on $M$. In fact, let $x_0\in M$ and $(x^k)$ a local coordinate with $x^k(x_0)=0$. Let $(x^k,u^k)$ be a coordinate associated with the local trivialization of $TM$. Denote the local expression of $\gamma$ and $\gamma^{\bcl}$ by $G(x^k,y^k)$ and $\widetilde G(x^k,y^k,u^k,v^k)$, respectively. Then
\begin{align*}
\widetilde G (x^k,y^k,u^k,v^k)&=s_M\left( \left(x^k,G^k,u^k,\frac{\partial G^k}{\partial x^l}u^l+\frac{\partial G^k}{\partial y^l}v^l\right) \right)\\
&=\left( x^k,u^k,G^k,\frac{\partial G^k}{\partial x^l}u^l+\frac{\partial G^k}{\partial y^l}v^l \right).
\end{align*}
Let $\Gamma^k_{ij}$ be the Christoffel symbols associated with the Levi-Civita connection. Then the relationship between connections and connection rules is written as
\[
\left.\frac{\partial^2G^k}{\partial y^i \partial y^j}(0,y^l)\right|_{y^l=0}=\Gamma^k_{ij}(0)
\]
since the connection rule $\gamma$ induces the Levi-Civita connection. We denote the Christoffel symbols associated with $\widetilde \nabla$ by $\widetilde \Gamma$ which are determined through
\begin{align*}
\widetilde \nabla_{\frac{\partial}{\partial x^i}}\frac{\partial}{\partial x^j}&=\widetilde{\Gamma}^k_{ij}\frac{\partial}{\partial x^k}+\widetilde{\Gamma}^{\bar{k}}_{ij}\frac{\partial}{\partial u^k},\\
\widetilde \nabla_{\frac{\partial}{\partial x^i}}\frac{\partial}{\partial u^j}&=\widetilde{\Gamma}^k_{i\bar{j}}\frac{\partial}{\partial x^k}+\widetilde{\Gamma}^{\bar{k}}_{i\bar{j}}\frac{\partial}{\partial u^k},\\
\widetilde \nabla_{\frac{\partial}{\partial u^i}}\frac{\partial}{\partial x^j}&=\widetilde{\Gamma}^k_{\bar{i}j}\frac{\partial}{\partial x^k}+\widetilde{\Gamma}^{\bar{k}}_{\bar{i}j}\frac{\partial}{\partial u^k},\\
\widetilde \nabla_{\frac{\partial}{\partial u^i}}\frac{\partial}{\partial u^j}&=\widetilde{\Gamma}^k_{\bar{i}\bar{j}}\frac{\partial}{\partial x^k}+\widetilde{\Gamma}^{\bar{k}}_{\bar{i}\bar{j}}\frac{\partial}{\partial u^k},
\end{align*}
where we use the index $(i,j,k)$ for the components of $\frac{\partial}{\partial x^i}, \frac{\partial}{\partial y^i}$ and $(\bar{i}, \bar{j}, \bar{k})$ for those of $\frac{\partial}{\partial u^i}, \frac{\partial}{\partial v^i}$. Then by a simple calculation, we obtain
\begin{align*}
\widetilde{\Gamma}^k_{ij}(0,u^k)&=\frac{\partial^2 G^k}{\partial y^i \partial y^j}(0,0)=\Gamma^k_{ij}\\
\widetilde{\Gamma}^k_{\bar{i}j}(0,u^k)&=\frac{\partial^2 G^k}{\partial v^i \partial y^j}(0,0)=0\\
\widetilde{\Gamma}^k_{\bar{i}\bar{j}}(0,u^k)&=\frac{\partial^2 G^k}{\partial v^i \partial v^j}(0,0)=0\\
\widetilde{\Gamma}^{\bar{k}}_{\bar{i}j}(0,u^k)&=\frac{\partial^2}{\partial v^i \partial y^j}\left(\frac{\partial G^k}{\partial x^l}u^l+\frac{\partial G^k}{\partial y^l}v^l \right)=\frac{\partial^2 G^k}{\partial y^j \partial y^l}(0,0)\delta_i^l=\Gamma^k_{ij},\\
\widetilde{\Gamma}^{\bar{k}}_{\bar{i}\bar{j}}(0,u^k)&=\frac{\partial^2}{\partial v^i \partial v^j}\left(\frac{\partial G^k}{\partial x^l}u^l+\frac{\partial G^k}{\partial y^l}v^l \right)=0.
\end{align*}
Finally, note that it holds that
\begin{align*}
\frac{\partial \Gamma^k_{ij}}{\partial x^l}(0)&= \left(\frac{\partial}{\partial x^l}\right)_{x=0}\left(\frac{\partial^2 G^k}{\partial y^i \partial y^j}(x^k,x^k)\right)\\
&= \frac{\partial^3 G^k}{\partial x^l \partial y^i \partial y^j}(0,0)+\frac{\partial^3 G^k}{\partial y^l \partial y^i \partial y^j}(0,0)
\end{align*}
Therefore, we obtain
\begin{align*}
\widetilde{\Gamma}^{\bar{k}}_{ij}(0,u^k)&=\left(\frac{\partial^2}{\partial y^i \partial y^j}\right)_{y=0} \left(\frac{\partial G^k}{\partial x^l}(0,y^k)u^l+\frac{\partial G^k}{\partial y^l}(0,y^k)u^l \right)=\frac{\partial \Gamma^k_{ij}}{\partial x^l}(0)u^l.
\end{align*}
Those Christoffel symbols coincide with those of the complete lift of the connection.
\end{rem}
Let $\lambda_g \colon TM \to T^*M$ be the bundle isomorphism induced by the Riemannian metric $g$. We also denote the isomorphism from $TTM$ to $T^*TM$ induced by the pseudo-Riemannian metric $g^{\bcl}$ by $\lambda_{g^{\bcl}}$. 
\begin{prop}\label{inthorvert}
Let $\bJ=(X,J)$ be a $TM$-valued semimartingale and $\wilde{\cV}$ a $TTM$-valued adapted \cl process above $\bJ$.
\begin{itemize}
\item[(1)]If $\wilde{\cV}_-\in V_{\bJ_-}M$, then
\[
\int \lambda_{g^{\bcl}}\left( \wilde{\cV}_- \right) d^{\gamma^{\bcl}}\bJ = \int \langle v_{\bJ_-}^{-1}(\wilde{\cV}_-), dX\rangle_{\delta_{\mathbb{R}^d}}.  
\]
\item[(2)]If $\wilde{\cV}_-\in H_{\bJ_-}M$, then
\begin{align*}
\int \lambda_{g^{\bcl}}\left( \wilde{\cV}_- \right) d^{\gamma^{\bcl}}\bJ &= \int \langle (h^{\nabla}_{\bJ_-})^{-1}(\wilde{\cV}_-), dJ \rangle_{\delta_{\mathbb{R}^d}} + \frac{1}{2}\int \langle \II((h^{\nabla}_{\bJ_-})^{-1} (\wilde{\cV}_-), J_-), \II(dX^c, dX^c)\rangle_{\delta_{\mathbb{R}^d}} \\
&\h +\sum_{0<s\leq \cdot} \langle \II( (h^{\nabla}_{\bJ_{s-}})^{-1}(\wilde{\cV}_{s-}),J_{s-}),  \gamma^{\perp}(X_{s-},X_s) \rangle_{\delta_{\mathbb{R}^d}},
\end{align*}
where we set $\gamma^{\perp}(x,y)=\Pi^{\perp}_x(y-x)$ for $x,y\in M$ and denote the continuous local martingale part of an $\mathbb{R}^d$-valued semimartingale $X$ by $X^c$.
\end{itemize}
\end{prop}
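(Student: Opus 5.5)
The plan is to reduce the $\gamma^{\bcl}$-It\^o integral on $TM\subset\mathbb{R}^d\times\mathbb{R}^d$ to ordinary Euclidean stochastic integrals plus explicit correction terms, and then evaluate the corrections for vertical and horizontal $\wilde{\cV}$.

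\textbf{Step 1: a comparison formula.} For $F\in C^\infty(\mathbb{R}^{2d})$, compare the defining property (2) of $\int dF\, d^{\gamma^{\bcl}}\bJ$ with the Euclidean It\^o formula for $F(\iota_*\bJ)$. Subtracting the two gives
\begin{align*}
\int dF(\bJ_-)\, d^{\gamma^{\bcl}}\bJ &= \int DF(\bJ_-)\,d(X,J) + \frac12\int \bigl(D^2F-\widetilde\nabla dF\bigr)(\bJ_-)\, d[\bJ,\bJ]^c \\
&\h + \sum_{0<s\leq\cdot}\bigl\langle DF(\bJ_{s-}),\, \gamma^{\bcl}(\bJ_{s-},\bJ_s)-(\Delta X_s,\Delta J_s)\bigr\rangle .
\end{align*}
By the localization property (1) of the It\^o integral and linearity, the same identity holds for any predictable $T^*TM$-valued integrand of the form $\sum_i K_i\, dF_i(\bJ_-)$. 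The covector $\lambda_{g^{\bcl}}(\wilde{\cV}_-)$ is of this form, because it is the restriction of the ambient covector $\langle\cdot,\cdot\rangle_{\delta^{\bcl}_{\mathbb{R}^d}}$ paired with $\iota_{**}\wilde{\cV}_-$: the Euclidean pairing of $(a_1,a_2)$ with $(b_1,b_2)$ is $\langle a_1,b_2\rangle+\langle a_2,b_1\rangle$. The term $D^2F-\widetilde\nabla dF$ is the ambient covector paired with the second fundamental form of $TM\subset(\mathbb{R}^{2d},\delta^{\bcl})$ along the complete-lift connection. This identification uses the earlier Remark that the connection induced by $\gamma^{\bcl}$ is $\nabla^{\bcl}$, the Levi-Civita connection of $g^{\bcl}=(\iota_*)^*\delta^{\bcl}$.

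\textbf{Step 2: compute the two correction terms.} For the jump term, formula \eqref{conntan} gives
\[
\gamma^{\bcl}(\bU,\bV)-(y-x,\bV-\bU)=\bigl(-\gamma^\perp(x,y),\ \II(\gamma(x,y),\bU)-A_{\gamma^\perp(x,y)}\bU-\Pi^\perp_x(\bV-\bU)\bigr).
\]
For the continuous term, I would show that the ambient second fundamental form of $TM$, evaluated on $(d X^c,dJ^c)$, has first component $\II(dX^c,dX^c)$ and an explicitly computable second component. This follows by differentiating a curve $(x(t),\bU(t))$ twice and subtracting its $\nabla^{\bcl}$-acceleration.

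\textbf{Step 3: pair with each type of $\wilde{\cV}$.}
\begin{itemize}
\item[(1)] If $\wilde{\cV}_-$ is vertical, $(0,w)$ with $w$ tangent, the Euclidean part is $\int\langle w,dX\rangle$. Both corrections only see first components, namely $-\gamma^\perp$ and $\II(dX^c,dX^c)$. These are normal, so their pairing with the tangent vector $w$ vanishes, which gives (1).
\item[(2)] If $\wilde{\cV}_-$ is horizontal, $(a,\II(a,J_-))$ with $a=(h^\nabla_{\bJ_-})^{-1}\wilde{\cV}_-$, the Euclidean part is
\[
\int\langle a,dJ\rangle+\int\langle \II(a,J_-),dX\rangle .
\]
Since $\II(a,J_-)$ is normal, the second integral keeps only the normal part of $dX$. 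By the Euclidean It\^o formula for the embedded $X$, that normal part is $\tfrac12\II(dX^c,dX^c)$ plus the jumps $\gamma^\perp(X_-,X)$. I would then add the corrections and use the Weingarten relation $g(A_\xi a,J)=-\langle\xi,\II(a,J)\rangle$ to rewrite the $A$-term. The claim is that these contributions combine exactly to the stated coefficients, $\tfrac12$ on the $\II(dX^c,dX^c)$ term and $1$ on the jump term.
\end{itemize}

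\textbf{Main obstacle.} The hard part is the bookkeeping in case (2): computing the second component of the ambient second fundamental form of $TM$ and checking that the continuous and jump terms combine with exactly the stated coefficients. Steps 1 and 3(1) are essentially formal. I would first fix a point and use a local orthonormal parallel frame, as in the proof of \pref{cpltgamma}, to compute the Hessian of $TM$ in $\mathbb{R}^{2d}$ concretely.
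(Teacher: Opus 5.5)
Your proposal is correct, but it is organized differently from the paper's proof.

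\textbf{The paper's route.} The paper treats $d^{\gamma^{\bcl}}\bJ$ as a formal differential. It reads off from \eqref{conntan} the splitting
\[
d^{\gamma^{\bcl}}\bJ=h^{\nabla}_{\bJ_-}(d^{\gamma}X)+v_{\bJ_-}(\Pi_{X_-}dJ-A_{\Pi^{\perp}dX}J_-).
\]
Under $\delta_{\mathbb{R}^d}^{\bcl}$, horizontal--horizontal and vertical--vertical pairings vanish, while $\langle h^{\nabla}_{\bU}(a),v_{\bU}(c)\rangle_{\delta_{\mathbb{R}^d}^{\bcl}}=\langle a,c\rangle_{\delta_{\mathbb{R}^d}}$. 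So in case (2) only $\langle a,dJ\rangle-\langle a,A_{\Pi^{\perp}dX}J_-\rangle$ survives. The Weingarten relation turns the $A$-term into $\langle\II(a,J_-),\Pi^{\perp}dX\rangle$, and the It\^o formula for $\bar{\iota}(X)$ identifies $\Pi^{\perp}dX$ with $\frac12\II(dX^c,dX^c)+\gamma^{\perp}(X_-,X)$.

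\textbf{Your route.} You derive everything from the defining property of the $\gamma^{\bcl}$-integral by comparing with the Euclidean It\^o formula, and the roles are swapped. Your $\frac12\II$ and $\gamma^{\perp}$ terms come from pairing the second component $\II(a,J_-)$ of the horizontal lift with $dX$. The $A$-term of \eqref{conntan} instead cancels inside your jump correction. The paper's route is shorter. Yours keeps the second-order It\^o terms explicit rather than hiding them in a formal differential.

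\textbf{The main obstacle disappears.} Both correction terms vanish identically for every $TTM$-valued $\wilde{\cV}$.

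The continuous correction is $-\frac12\int\langle\wilde{\cV}_-,\widetilde{\II}(d\bJ^c,d\bJ^c)\rangle_{\delta_{\mathbb{R}^d}^{\bcl}}$, where $\widetilde{\II}$ is the second fundamental form of $\iota_*(TM)$. $D$ is the Levi-Civita connection of the constant metric $\delta_{\mathbb{R}^d}^{\bcl}$, and $\nabla^{\bcl}$ is that of the nondegenerate induced metric $g^{\bcl}=(\iota_*)^*\delta_{\mathbb{R}^d}^{\bcl}$. The Gauss formula then makes $\widetilde{\II}$ $\delta_{\mathbb{R}^d}^{\bcl}$-normal, so this pairing is zero and you never need the second component you planned to compute.

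The jump correction vanishes because $\gamma^{\bcl}(\bU,\bV)-(\iota_*\bV-\iota_*\bU)$ is $\delta_{\mathbb{R}^d}^{\bcl}$-orthogonal to $T_{\bU}TM$. Against $(0,w)$ it gives $-\langle w,\gamma^{\perp}(x,y)\rangle=0$. Against $(a,\II(a,\bU))$ the two terms cancel by the Weingarten relation, exactly as in your Step 3.

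So $\gamma^{\bcl}$ is the orthogonal-projection connection rule of the pseudo-Riemannian embedding $\iota_*$, and your Step 1 gives
\[
\int\lambda_{g^{\bcl}}(\wilde{\cV}_-)\,d^{\gamma^{\bcl}}\bJ=\int\langle\wilde{\cV}_-,d\bJ\rangle_{\delta_{\mathbb{R}^d}^{\bcl}}
\]
for every $\wilde{\cV}$. Consequently, in Step 3(2) the corrections do not adjust the coefficients; they cancel to zero. The coefficients $\frac12$ and $1$ come entirely from the Euclidean term $\int\langle\II(a,J_-),dX\rangle$.
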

\begin{proof}
First, we assume $\wilde{\cV}_- \in V_{\bJ_-}M$. Then
\begin{align*}
\lambda_{g^{\bcl}}(\wilde{\cV}_-)\, d^{\gamma^{\bcl}}\bJ&=\langle \wilde{\cV}_-, d^{\gamma^{\bcl}}\bJ \rangle_{\delta_{\mathbb{R}^d}^{\bcl}}\\
&=\langle v^{-1}_{\bJ_-}(\wilde{\cV}_-), dX \rangle_{\delta_{\mathbb{R}^d}}.
\end{align*}
Thus we have (1). Next we assume $\wilde{\cV}_- \in H_{\bJ_-}M$. Then we have
\begin{align*}
\lambda_{g^{\bcl}}\left( \wilde{\cV}_- \right) d^{\gamma^{\bcl}}\bJ &=\langle \wilde{\cV}_-,d^{\gamma^{\bcl}}\bJ\rangle_{\delta_{\mathbb{R}^d}^{\bcl}}\\
&=\langle \wilde{\cV}_-, h^{\nabla}_{\bJ_-}\left( d^{\gamma}X \right) \rangle_{\delta_{\mathbb{R}^d}^{\bcl}}+\langle \wilde{\cV}_-, v_{\bJ_-}\left( \Pi_{X_-}dJ \right) \rangle_{\delta_{\mathbb{R}^d}^{\bcl}} - \langle \wilde{\cV}_-, v_{\bJ_-}(A_{\Pi^{\perp}dX}J_-) \rangle_{\delta_{\mathbb{R}^d}^{\bcl}}\\
&=\langle h^{\nabla -1}_{\bJ_-}(\wilde{\cV}_-), dJ \rangle_{\delta_{\mathbb{R}^d}} + \langle  h^{\nabla -1}_{\bJ_-}(\wilde{\cV}_-), -A_{\Pi^{\perp}dX}J_- \rangle_{\delta_{\mathbb{R}^d}}.
\end{align*}
Let $\bi$ be the extension of the embedding $\iota \colon M \to \mathbb{R}^d$ to a tubular neighborhood of $M$, obtained by composing $\iota$ with the normal projection. Then, for $x \in M$,
\[
D \bi (x)=\Pi_x,\ D^2\bi(x)(\bU,\bV)=\II_x(\bU,\bV)
\]
for $\bU,\bV \in T_xM$. Therefore,
\begin{align*}
\langle  (h^{\nabla}_{\bJ_-})^{-1}(\wilde{\cV}_-), -A_{\Pi^{\perp}dX}J_- \rangle_{\delta_{\mathbb{R}^d}} &= \langle \II( (h^{\nabla}_{\bJ_-})^{-1}(\wilde{\cV}_-),J_-), \Pi^{\perp}dX\rangle_{\delta_{\mathbb{R}^d}} \\
&= \langle \II( (h^{\nabla}_{\bJ_-})^{-1}(\wilde{\cV}_-),J_-), d\bi(X)-\langle D\bi(X_-), d\bi(X)\rangle \rangle_{\delta_{\mathbb{R}^d}} \\
&= \frac{1}{2}\langle \II( (h^{\nabla}_{\bJ_-})^{-1}(\wilde{\cV}_-),J_-), \II (dX^c,dX^c) \rangle_{\delta_{\mathbb{R}^d}} \\
&\h + \langle \II( (h^{\nabla}_{\bJ_-})^{-1}(\wilde{\cV}_-),J_-),  \gamma^{\perp}(X_-,X) \rangle_{\delta_{\mathbb{R}^d}}.
\end{align*}
Thus we have (2).
\end{proof}

\begin{cor}\label{martveccondition}
Let $\bJ=(X,J)$ be a $TM$-valued semimartingale. Then the following are equivalent:
\begin{itemize}
\item[(i)]$\bJ$ is a $\gamma^{\bcl}$-martingale;
\item[(ii)]$X$ is a $\gamma$-martingale and for any $\cX \in \mathfrak{X}(M)$,
\begin{align*}
&\int \langle \cX(X_-), dJ \rangle_{\delta_{\mathbb{R}^d}} + \frac{1}{2}\int \langle \II(\cX(X_-),\bJ_-),\II(dX^c,dX^c)\rangle_{\delta_{\mathbb{R}^d}}\\
&+\sum_{0<s\leq \cdot} \langle \II( \cX(X_-) ,\bJ_{s-}), \gamma^{\perp}(X_-,X) \rangle_{\delta_{\mathbb{R}^d}}
\end{align*}
is a local martingale.
\item[(iii)]$X$ is a $\gamma$-martingale and for any $\cX \in \mathfrak{X}(M)$, $\int \lambda_{g^{\bcl}}(\cX^{\bcl}(\bJ_-))\, d^{\gamma^{\bcl}}\bJ_s$ is a local martingale.
\end{itemize}
\end{cor}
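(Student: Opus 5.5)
The plan is to test the $\gamma^{\bcl}$-martingale property on a convenient spanning family of $1$-forms on $TM$. Every $\alpha\in\Omega^1(TM)$ can locally be written as a $C^\infty(TM)$-combination of forms of the type $\lambda_{g^{\bcl}}(\cX^{\bvl})$ and $\lambda_{g^{\bcl}}(\cX^{\bcl})$, $\cX\in\mathfrak{X}(M)$. This holds because $g^{\bcl}$ is nondegenerate, and at each $\bU$ the vectors $\cX^{\bvl}(\bU)$ and $\cX^{\bcl}(\bU)$, with $\cX$ ranging over vector fields, span $T_{\bU}TM$ by \eqref{clhor}. Since $M$ is embedded, we can even take global generators: the projections $\cX_i(x)=\Pi_x e_i$, $i=1,\dots,d$, of the standard basis. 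Then every $1$-form can be written as $\alpha=\sum_i (a_i\,\lambda_{g^{\bcl}}(\cX_i^{\bvl})+b_i\,\lambda_{g^{\bcl}}(\cX_i^{\bcl}))$ with smooth coefficients $a_i,b_i$.

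By property 1 of the It\^o integral (predictable integrands pull out), $\int\alpha(\bJ_-)\,d^{\gamma^{\bcl}}\bJ=\sum_i\int a_i(\bJ_-)\,d(\int\lambda_{g^{\bcl}}(\cX_i^{\bvl}(\bJ_-))d^{\gamma^{\bcl}}\bJ)+\cdots$. A stochastic integral of a locally bounded predictable process against a local martingale is again a local martingale. Hence (i) holds if and only if all the integrals $\int\lambda_{g^{\bcl}}(\cX^{\bvl}(\bJ_-))d^{\gamma^{\bcl}}\bJ$ and $\int\lambda_{g^{\bcl}}(\cX^{\bcl}(\bJ_-))d^{\gamma^{\bcl}}\bJ$ are local martingales. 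Note that $\cX_i^{\bvl}$ takes values in the vertical bundle, and $v^{-1}_{\bU}\cX^{\bvl}(\bU)=\cX(\pi_{TM}\bU)$. So \pref{inthorvert}(1) gives
\[
\int\lambda_{g^{\bcl}}(\cX^{\bvl}(\bJ_-))\,d^{\gamma^{\bcl}}\bJ=\int\langle\cX(X_-),dX\rangle_{\delta_{\mathbb{R}^d}}.
\]
Because $\gamma(x,y)=\Pi_x(y-x)$, this is exactly the $\gamma$-It\^o integral $\int\lambda_g(\cX)(X_-)\,d^{\gamma}X$. To justify this, I compare the It\^o formula for $\langle\cX,\cdot\rangle$ with the defining property 2, or I simply use that $\int\alpha\,d^\gamma X=\int\langle\alpha^\sharp(X_-),dX\rangle$ for the embedded connection rule. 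Since the $\cX$ generate $T^*M$ in the same sense, these integrals being local martingales for all $\cX$ is equivalent to $X$ being a $\gamma$-martingale. This gives the first half of (ii) and (iii).

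For the complete-lift forms I use \eqref{clhor}: $\cX^{\bcl}(\bU)=h^{\nabla}_{\bU}(\cX)+v_{\bU}(\nabla_{\bU}\cX)$. By linearity,
\[
\int\lambda_{g^{\bcl}}(\cX^{\bcl}(\bJ_-))\,d^{\gamma^{\bcl}}\bJ=\int\lambda_{g^{\bcl}}(h^{\nabla}_{\bJ_-}\cX(X_-))\,d^{\gamma^{\bcl}}\bJ+\int\langle\nabla_{J_-}\cX(X_-),dX\rangle_{\delta_{\mathbb{R}^d}}.
\]
The last term is $\int\lambda_g(\nabla_{J_-}\cX)\,d^\gamma X$, an integral of a locally bounded predictable $T^*M$-valued process along $X$. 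It is therefore a local martingale whenever $X$ is a $\gamma$-martingale; to see this, expand in the generators $\cX_i$ with predictable coefficients. Granting that $X$ is a $\gamma$-martingale, the complete-lift integral is thus a local martingale if and only if the horizontal integral is. By \pref{inthorvert}(2) with $(h^{\nabla}_{\bJ_-})^{-1}(\wilde\cV_-)=\cX(X_-)$, the horizontal integral is precisely the expression displayed in (ii). This yields (ii)$\Leftrightarrow$(iii). Together with the first step, (i) is equivalent to ($X$ is a $\gamma$-martingale, and both the vertical and the complete-lift integrals are local martingales), which is (iii).

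The main obstacle is the spanning argument that reduces (i) to the two families of test forms. Specifically, I must check that the coefficients $a_i,b_i$ can be chosen smooth, or at least locally bounded, on all of $TM$, so that the integrals $\int a_i(\bJ_-)\,d(\cdot)$ preserve the local martingale property. I would handle this with the global frame $\cX_i=\Pi e_i$ and the nondegenerate pairing $g^{\bcl}$. The coefficients then come from a smooth pseudo-inverse of the Gram-type matrix, which has constant rank $2n$. If that is awkward, I would instead localize by stopping times to compact sets, using the local characterization of $\gamma$-martingales.
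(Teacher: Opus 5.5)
Your proposal is correct and is essentially the paper's argument: both test the $\gamma^{\bcl}$-integral on a spanning family of lifts of vector fields, evaluate these integrals with \pref{inthorvert}, and pass between horizontal and complete lifts with \eqref{clhor}. The only difference is the choice of spanning family. You use vertical and complete lifts and prove (i)$\Leftrightarrow$(iii) first, whereas the paper uses vertical and horizontal lifts and proves (i)$\Leftrightarrow$(ii) first. Along the way you make explicit what the paper calls obvious: the correction term $\int\langle\nabla_{J_-}\cX(X_-),dX\rangle_{\delta_{\mathbb{R}^d}}$ is a local martingale once $X$ is a $\gamma$-martingale, and the expansion coefficients are smooth.
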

\begin{proof}
If $\bJ$ is a $\gamma^{\bcl}$-martingale, then for any $\cX \in \mathfrak{X}(M)$, both $\int \langle v_{\bJ_-}(\cX(X_-)), d^{\gamma^{\bcl}}\bJ \rangle_{\delta_{\mathbb{R}^d}^{\bcl}}$ and $\int \langle h_{\bJ_-}^{\nabla}(\cX(X_-)), d^{\gamma^{\bcl}}\bJ \rangle_{\delta_{\mathbb{R}^d}^c}$ are local martingales. Thus by \pref{inthorvert}, $\bJ$ satisfies (ii). Conversely, assume that $\bJ$ satisfies (ii). Let $\wilde{\cV}$ be a $TTM$-valued \cl adapted process above $\bJ$. Then its vertical and horizontal parts $\wilde{\cV}^V$, $\wilde{\cV}^H$ are written as
\begin{align*}
\wilde{\cV}_t^V&=C^i_tv_{\bJ_t}(\cX_i(X_t)),\\
\wilde{\cV}_t^H&=D^i_th^{\nabla}_{\bJ_t}(\cY_i(X_t))
\end{align*}
for some $\mathbb{R}$-valued \cl adapted processes $C^i$ and $D^i$ and vector fields $\cX_i, \cY_i \in \mathfrak{X}(M)$. Therefore, we have
\begin{align*}
\int \lambda_{g^{\bcl}}(\wilde{\cV}_-)\, d^{\gamma^{\bcl}}\bJ &= \int C^i_- d\left(\int \lambda_{g^{\bcl}}(v_{\bJ_-}(\cX_i(X_-)))\, d^{\gamma^{\bcl}}\bJ \right)\\
&\h + \int D^i_- d\left( \int \lambda_{g^{\bcl}}(h_{\bJ_-}(\cY_i(X_-)))\, d^{\gamma^{\bcl}}\bJ \right)
\end{align*}
and the right-hand side is a local martingale by the assumption and \pref{inthorvert}. Finally, the equivalence of (ii) and (iii) is obvious by the relation \eqref{clhor}.
\end{proof}

\section{Proof of \tref{DiffHarmonic}}\label{proof}
Let $(E,h)$ be a second-countable Riemannian manifold and $\bdm$ the associated Riemannian volume measure. Let $(\cE,\cF)$ be a regular Dirichlet form on $L^2(E,\bdm)$. To begin with, we recall several consequences of conditions (A) and (B).
Hereafter, we fix a L\'evy system $(N,H)$ of $Z$, that is, $N$ is a kernel from $E$ to $E_{\varDelta}$, $H$ is a positive continuous additive functional (PCAF) of $Z$ satisfying
\begin{align}\label{LevySys}
\Ex_z \left[ \sum_{0<s\leq t}Y_sF(Z_{s-},Z_s) \right]=\Ex_z \left[ \int_0^tY_s \int_{E_{\varDelta}}F(Z_{s},w)\, N(Z_s,dw)dH_s \right]
\end{align}
for every nonnegative predictable process $Y$ and nonnegative Borel function $F$ on $E_{\varDelta}\times E_{\varDelta}$ satisfying $F(w,w)=0$. Let $\mu_H$ be the Revuz measure of $H$. Then for the jump measure $\sJ$ and killing measure $\sk$ used in \eqref{BDdecom}, we have
\begin{align*}
\sJ(dzdw)&=N(z,dw)\mu_H(dz),\\
\sk(dz)&=N(z,\{\varDelta\})\mu_H(dz).
\end{align*}
\pref{exttest} below is a modification of Lemma 6.7.8 of \cite{ChenFuku} and the proof is completely the same. 
\begin{prop}[cf. \cites{Chen09, ChenFuku}]\label{exttest}
Let $u \in \cF_{loc}^D\cap L^{\infty}_{loc}(D)$ be a Borel measurable function satisfying \thetag{A}. Then for each relatively compact open subset $D_1$ with $\overline{D_1}\subset D$ and $\psi \in \cF^{D_1}\cap L^{\infty}(E)$,
\begin{align*}
\frac{1}{2}\mu_{\langle u,\psi \rangle}^c(D)+\frac{1}{2}\int_{E \times E \backslash \mathrm{diag}(E)}(u(z)-u(w))(\psi(z)-\psi(w))\, \sJ(dzdw)+\int_D u\psi \, d\sk
\end{align*}
is well-defined and finite. We continue to denote the value by $\cE(u,\psi)$. For a relatively compact open set $D_2$ with $\overline{D_1} \subset D_2 \subset \overline{D_2} \subset D$ and $\phi \in C_0(D)\cap \cF$ with $\phi=1$ on $\overline{D_2}$, it also holds that
\[
\cE(u,\psi)=\cE(\phi u, \psi)-\int_{D_1 \times (E \backslash D_2)}\psi(z) \left((1-\phi)u \right)(w)\, \sJ(dzdw).
\]
\end{prop}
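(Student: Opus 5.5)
The plan follows the proof of Lemma 6.7.8 in \cite{ChenFuku}, applied with the cut-off $\phi$ fixed in the statement. Since $\phi\in C_0(D)\cap\cF$ with $\phi=1$ on $\overline{D_2}$, and $u\in \cF^D_{loc}\cap L^\infty_{loc}(D)$, I will first check that $\phi u\in \cF\cap L^\infty(E)$. Indeed, $u$ coincides on a relatively compact neighbourhood of $\mathrm{supp}[\phi]$ with some $u_0\in\cF^D$, which may be truncated to be bounded. Then $\phi u=\phi u_0$ is a product of bounded elements of $\cF$, hence lies in $\cF$. Consequently $\cE(\phi u,\psi)$ is a genuine finite value given by the Beurling--Deny formula \eqref{BDdecom}.

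The core step is to split $u=\phi u+(1-\phi)u$ inside each of the three terms of the proposed expression.

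\emph{Continuous part.} Strong locality of $\mu^c$ gives $\mu^c_{\langle u,\psi\rangle}(D)=\mu^c_{\langle \phi u,\psi\rangle}(E)$. The reason is that $\mu^c_{\langle\cdot,\psi\rangle}$ charges only $\mathrm{supp}[\psi]\subset\overline{D_1}$, and there $u=\phi u$ on the neighbourhood $D_2$.

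\emph{Killing part.} Here $\int_D u\psi\,d\sk=\int_E\phi u\,\psi\,d\sk$, because $\psi=0$ q.e. off $D_1$.

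\emph{Jump part.} Set $v=(1-\phi)u$, which vanishes on $D_2$. Then
\[
(u(z)-u(w))(\psi(z)-\psi(w))=(\phi u(z)-\phi u(w))(\psi(z)-\psi(w))+(v(z)-v(w))(\psi(z)-\psi(w)).
\]
The first product is $\sJ$-integrable since $\phi u,\psi\in\cF$. For the second, $\psi$ vanishes off $D_1$ and $v$ vanishes on $D_2$, so the product is nonzero only when one point lies in $D_1$ and the other in $E\backslash D_2$. On $D_1\times(E\backslash D_2)$ the product equals $-\psi(z)v(w)$, and by symmetry of $\sJ$ the mirror set contributes the same amount. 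Its absolute value is bounded by $\|\psi\|_\infty|u(w)|$, which is integrable by \thetag{A}. Summing the three pieces then gives both the finiteness and the displayed identity, with the factor $\frac12\cdot 2$ producing exactly $-\int_{D_1\times(E\backslash D_2)}\psi(z)((1-\phi)u)(w)\,\sJ(dzdw)$.

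I expect the main obstacle to be this jump-term bookkeeping. One has to use quasi-continuous versions so that ``$\psi=0$ q.e. off $D_1$'' also holds $\sJ$-a.e.; this follows because $\sJ$ charges no set of zero capacity in either coordinate, since its marginal $\mu_H$ is smooth. One also has to separate out the near-diagonal region $D_2\times D_2$, where only $\phi u$ contributes. A final check is that the value does not depend on the choice of $D_2$ and $\phi$: the right-hand side equals the intrinsic left-hand expression, which involves neither, so independence is automatic once the identity is proved.
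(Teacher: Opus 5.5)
Your proposal is correct and is essentially the argument the paper relies on. The paper gives no separate proof and refers to Lemma 6.7.8 of \cite{ChenFuku}, which likewise splits $u=\phi u+(1-\phi)u$, uses strong locality (with the buffer $D_2$) for the continuous part, uses smoothness of $\sk$ and $\mu_H$ to pass to quasi-continuous versions, and uses the symmetry of $\sJ$ together with \thetag{A} for the cross jump term.

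The only slip is cosmetic. The statement does not assume $0\le\phi\le 1$, so on $D_1\times(E\backslash D_2)$ the integrand should be bounded by $\|\psi\|_\infty(1+\|\phi\|_\infty)|u(w)|$ rather than $\|\psi\|_\infty|u(w)|$; this does not affect integrability.
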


\begin{lem}{(\cites{Chen09, ChenFuku}, Lemma 3.5 of \cite{Oka24})}\label{hAB}
Let $D$ be an open set. Let $D_1,D_2$ be relatively compact open sets in $E$ satisfying \eqref{opensets} and $u\in \mathcal{F}^D_{loc}\cap L^{\infty}_{loc}(D)$ a Borel measurable function satisfying \thetag{A} and \thetag{B}. Let $\phi= \phi_{D_2} \in \cF \cap C_0(D)$ be a function satisfying the conditions in \thetag{B}. Then the process
\[
A_t:= \mathbf{1}_{\{t\geq \tau_{D_1}\}}\{(1-\phi (Z_{\tau_{D_1}}))u(Z_{\tau_{D_1}})-(1-\phi (Z_{\tau_{D_1}-}))u(Z_{\tau_{D_1}-})\}
\]
is a process of $\mathbb{P}_z$-integrable variation for q.e. $z\in E$ and the dual predictable projection of $A_t$ is
\[
B_t=\int_0^{t\land \tau_{D_1}}\int_{E\, \backslash \, D_2}(1-\phi)u(z)\, N(Z_s,dz)dH_s.
\]
We further set
\begin{align*}
h_1(z)&:=\mathbb{E}_z\left[ (\phi u)(Z_{\tau_{D_1}}) \right],\\
h_2(z)&:=\mathbb{E}_z \left[((1-\phi )u)(Z_{\tau_{D_1}}) \right].
\end{align*}
Then it holds that $\phi u \in \mathcal{F}^D,\ h_1 \in \mathcal{F}_e,\ \phi u-h_1\in \mathcal{F}_e^{D_1}$, and $\mathbf{1}_{D_1} h_2\in \mathcal{F}_e^{D_1}$. Moreover $h_2=\mathbf{1}_{D_1}h_2+(1-\phi)u$ satisfies \thetag{A} and it holds that for any $\psi \in \cF^{D_1}_e$,
\begin{align}
\mathcal{E}(\mathbf{1}_{D_1}h_2,\psi)&=\int_{D_1 \times E\backslash D_2}\psi(z)\left((1-\phi)u\right)(w)\, \sJ(dzdw)\label{Eh21}
\end{align}
and for any $\psi \in \cF^{D_1}\cap L^{\infty}(D)$,
\begin{align}
\mathcal{E}((1-\phi)u,\psi)&=-\int_{D_1 \times E\backslash D_2}\psi(z)\left((1-\phi)u\right)(w)\, \sJ(dzdw).\label{Eh22}
\end{align}
\end{lem}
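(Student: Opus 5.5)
The statement to prove is \lref{hAB}, which the paper attributes to \cites{Chen09, ChenFuku} and Lemma 3.5 of \cite{Oka24}. I would follow the Chen--Fukushima argument for harmonic functions with jumps.

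\textbf{Step 1: the jump process $A$ and its compensator.} $A$ has a single jump, at $\tau_{D_1}$. Since $\phi=1$ on $D_2\supset \overline{D_1}$ and $Z_{\tau_{D_1}-}\in\overline{D_1}$, the term $(1-\phi(Z_{\tau_{D_1}-}))u(Z_{\tau_{D_1}-})$ vanishes. Hence
\[
A_t=\mathbf 1_{\{t\ge\tau_{D_1}\}}\bigl((1-\phi)u\bigr)(Z_{\tau_{D_1}}),
\]
which is nonzero only when the jump lands in $E\backslash D_2$.

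\textbf{Step 2: applying the L\'evy system.} Take $F(z,w)=\mathbf 1_{D_1}(z)\,((1-\phi)|u|)(w)$ and the predictable process $Y_s=\mathbf 1_{\{s\le\tau_{D_1}\}}$ in \eqref{LevySys}. This gives
\[
\Ex_z\bigl[|A|_\infty\bigr]=\Ex_z\Bigl[\int_0^{\tau_{D_1}}\int_{E\backslash D_2}(1-\phi)|u|(w)\,N(Z_s,dw)\,dH_s\Bigr].
\]
The right-hand side is the $0$-order potential, for the part process on $D_1$, of the measure $\nu=\int_{E\backslash D_2}(1-\phi)|u|(w)\,\sJ(\cdot\,dw)$ restricted to $D_1$. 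By \thetag{A}, $\nu$ is a finite measure.

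\textbf{Step 3: integrability q.e.} The same potential equals $f_u$ from \thetag{B}. Since $f_u\in\cF_e^{D_1}$, it is quasi-continuous and finite q.e. This gives integrable variation for q.e.\ $z$. Repeating Step 2 with a general predictable $Y$ and the signed $u$ identifies $B$ as the dual predictable projection of $A$.

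\textbf{Step 4: the statements about $h_1$.} $\phi u\in\cF^D$ because $u$ is locally in $\cF$, bounded on $\operatorname{supp}\phi$, and $\phi\in\cF\cap C_0$ (the product of bounded $\cF$ functions lies in $\cF$). Then $h_1=\mathbf H_{E\backslash D_1}(\phi u)$ is the hitting (harmonic) projection. Standard theory (\cite{ChenFuku}, Thm.~3.4.8) gives $h_1\in\cF_e$ and $\phi u-h_1\in\cF_e^{D_1}$.

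\textbf{Step 5: the statements about $h_2$.} On $D_1$ we have $h_2=f$ with the signed integrand, and the argument of Step 3 gives $\mathbf 1_{D_1}h_2\in\cF_e^{D_1}$. Outside $D_1$, $h_2=(1-\phi)u$. Condition \thetag{A} for $h_2$ follows from \thetag{A} for $u$ together with $\mathbf 1_{D_1}h_2\in\cF_e$.

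\textbf{Step 6: formula \eqref{Eh21}.} Use that $\mathbf 1_{D_1}h_2$ is the potential of the signed measure $\nu$ for the part form $(\cE^{D_1},\cF^{D_1}_e)$. Then $\cE(\mathbf 1_{D_1}h_2,\psi)=\int\tilde\psi\,d\nu$ for $\psi\in\cF_e^{D_1}$, which is exactly \eqref{Eh21}. I would establish this first for $|u|$, where $\nu$ is of finite energy by \thetag{B}, and then pass to the signed case by splitting $u=u^+-u^-$.

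\textbf{Step 7: formula \eqref{Eh22}.} For $\psi\in\cF^{D_1}\cap L^\infty$, the functions $\psi$ and $(1-\phi)u$ have disjoint "supports": $\psi$ vanishes q.e.\ off $D_1$, and $(1-\phi)u$ vanishes on $D_2$. So in the Beurling--Deny expression of \pref{exttest} the strongly local part and the killing part vanish. Only the cross terms of the jump integral survive, namely $-2\cdot\frac12\int_{D_1\times(E\backslash D_2)}\psi(z)\,((1-\phi)u)(w)\,\sJ(dzdw)$, using the symmetry of $\sJ$. This gives \eqref{Eh22}.

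\textbf{Main obstacle.} The hardest step is Step 6: identifying $\mathbf 1_{D_1}h_2$ as an energy potential in the extended space, where only transience of the part process is available and $\nu$ is a signed measure. My first approach is to approximate with $u_n=(u\wedge n)\vee(-n)$, restricted to $\{w: d(w,D_2)>1/n\}$, and pass to the limit. The approximations are controlled in $\cE$-norm by the bound $f_u\in\cF_e^{D_1}$, which dominates them monotonically.
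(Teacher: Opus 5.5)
The paper gives no proof of this lemma; it imports it from the references in its header. Your outline is the standard argument behind those references. Write $\nu_f(dz):=\mathbf 1_{D_1}(z)\int_{E\backslash D_2}((1-\phi)f)(w)\,\sJ(dz,dw)$. The argument then consists of:
\begin{itemize}
\item the L\'evy system for $A$ and $B$;
\item $\mathbf 1_{D_1}h_2=G^{D_1}\nu_u$ and $f_u=G^{D_1}\nu_{|u|}$, where $G^{D_1}$ is the Green operator of the part process;
\item the hitting-projection theorem for $h_1$;
\item the disjoint-support computation for \eqref{Eh22}.
\end{itemize}
Steps 2--4, 6 and 7 are fine. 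Your approximation in the last paragraph also works: $G\nu_n\le f_u$ gives $\cE(G\nu_n,G\nu_n)=\int G\nu_n\,d\nu_n\le\int\tilde f_u\,d\nu_n=\cE(G\nu_n,f_u)$, hence $\cE(G\nu_n,G\nu_n)\le\cE(f_u,f_u)$.

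Two small repairs are needed.
\begin{itemize}
\item The simplified formula for $A$ in Step 1 holds only on $\{\tau_{D_1}>0\}$. For $z\notin\overline{D_1}$ one has $\tau_{D_1}=0$ and $Z_{0-}=Z_0$, so $A\equiv 0$, whereas your formula gives $((1-\phi)u)(z)$.
\item In Step 5, membership of the \emph{signed} potential in $\cF^{D_1}_e$ comes from the domination $\nu_{u^{\pm}}\le\nu_{|u|}$ used in Step 6, not from Step 3.
\end{itemize}

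\textbf{The gap.} The step that fails as written is ``(A) for $h_2$ follows from (A) for $u$ together with $\mathbf 1_{D_1}h_2\in\cF_e$.''

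Condition (A) must hold for every pair $U,V$ with $\overline U\subset V\subset\overline V\subset D$. For $g:=\mathbf 1_{D_1}h_2$ the relevant region $U\times((E\backslash V)\cap D_1)$ is nonempty whenever $V\not\supset D_1$. Membership in $\cF_e$ only controls differences. Write $|g(w)|\le|g(w)-g(z)|+|g(z)|$. The first part is handled by Cauchy--Schwarz and $\sJ(U\times(E\backslash V))<\infty$. The second part is $\int_U|\tilde g|\,\sJ(\cdot,E\backslash V)$, and the finite measure $\mathbf 1_U\sJ(\cdot,E\backslash V)$ need not be of finite energy integral.

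Unbounded elements of $\cF$ can in fact violate (A). Take:
\begin{itemize}
\item Brownian motion on $\mathbb R^2$ plus paired jumps $z\leftrightarrow Tz$, with $T$ a diffeomorphism of a disc $K$ onto a distant disc;
\item jump rate $\rho(z)=|z-z_0|^{-2}\bigl(\log(1/|z-z_0|)\bigr)^{-1}\bigl(\log\log(1/|z-z_0|)\bigr)^{-2}\in L^1(K)$;
\item a compactly supported $H^1$ function with matching $\log\log$ singularities at $z_0$ and $Tz_0$.
\end{itemize}
This function lies in $\cF$ and has finite jump energy, but $\int_K|g|\rho\,dz=\infty$.

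So you need an estimate on $\mathbf 1_{D_1}h_2$ itself.
\begin{itemize}
\item If it is bounded on $D_1$, (A) is immediate from $\sJ(U\times(E\backslash V))<\infty$.
\item Otherwise, the natural route is $|\mathbf 1_{D_1}h_2|\le f_u=G^{D_1}\nu_{|u|}$ together with symmetry of $G^{D_1}$. This turns the integral into $\int G^{D_1}\mu_{U,V}\,d\nu_{|u|}$, where $\mu_{U,V}(dw)=\mathbf 1_{D_1\backslash V}(w)\,\sJ(U,dw)$. That requires a bound on the expected number of jumps from $D_1\backslash V$ into $U$ before $\tau_{D_1}$, which the hypotheses do not obviously provide.
\end{itemize}

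What the later sections use is only that $\cE(h_2,\psi)$ makes sense for $\psi\in\cF^{D_1}\cap L^\infty(E)$. For that, the splitting $h_2=\mathbf 1_{D_1}h_2+(1-\phi)u$ already suffices: the first summand lies in $\cF^{D_1}_e$ and the second satisfies (A). That weaker statement is what your argument actually establishes.
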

\begin{rem}\label{testfunc}
Test functions $\psi$ in \eqref{Eh22} can be taken from $\cF^{D_1} \cap L^{\infty}(E)$ due to \pref{exttest}.
\end{rem}
Under the assumptions of \lref{hAB}, we set $h:=h_1+h_2$, $v:=u-h$. Then by \lref{hAB} and \rref{testfunc}, we have the decomposition
\begin{align}\label{HarmDecom}
u=v + h,
\end{align}
where $v\in \cF_e^{D_1}$ and $h$ satisfies (A) and
\begin{align}\label{heq}
\cE(h, \psi)=0\ \text{for all}\ \psi \in \cF^{D_1}\cap L^{\infty}(E).
\end{align}
We recall the notation for the Fukushima decomposition. For $u\in \cF_e$, there exists a martingale additive functional (MAF) of finite energy $M^{[u]}$ and a continuous additive functional (CAF) of zero energy $N^{[u]}$ such that
\[
\tilde u(Z_t)-\tilde u(Z_0)=M^{[u]}_t + N^{[u]}_t,\ t\geq 0,\ \Prob_z \text{-a.s. for q.e.}\ z\in E
\]
by Theorem 5.2.2 of \cite{FOT}. For a vector valued function $u$, these notations are understood componentwise. We denote the Revuz measure associated with the PCAF $\langle M^{[u]},M^{[u]} \rangle$ by $\mu_{\langle u \rangle}$. Let $D_1$ be a relatively compact open set with $\overline{D_1}\subset D$. For $u \in \cF_e$ and $v\in \cF_e \cap L^2(E;\mu_{\langle u \rangle})$, the stochastic integral $\dis \int v(Z)\, dN^{[u]}$ was defined in \cite{Nakao} and its stopped process at $\tau_{D_1}$ is characterized as a CAF of zero energy satisfying
\[
\lim_{t \to 0} \frac{1}{t}\Ex_{l\bdm}\left[ \int_0^{t\land \tau_{D_1}} v(Z_s)\, dN^{[u]}_s \right]=-\cE(lv, u)
\]
for each $l \in \cF^{D_1}\cap L^{\infty}(E)$ due to Lemma 5.4.4 of \cite{FOT} and Theorem 2.2 of \cite{Nakao}.
\begin{prop}\label{ELeq}
Let $D$ be an open set. Let $D_1,D_2$ be relatively compact open sets in $E$ satisfying \eqref{opensets}.
Let $u\in \cF^D_{loc}(M)\cap L^{\infty}_{loc}(D)$ be an $\cE$-harmonic map. Let $u=v+h$ be the decomposition given by \eqref{HarmDecom}. Then for any $\psi \in \cF^{D_1}(u^*TM)\cap L^{\infty}(E)$, $\cE(v,\psi)=0$.
\end{prop}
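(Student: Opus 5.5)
Since $u=v+h$, we have $\cE(v,\psi)=\cE(u,\psi)-\cE(h,\psi)$. By \eqref{heq}, $\cE(h,\psi)=0$ for every $\psi\in\cF^{D_1}\cap L^\infty(E)$, applied componentwise. The claim therefore reduces to showing $\cE(u,\psi)=0$ for every $\psi\in\cF^{D_1}(u^*TM)\cap L^\infty(E)$. Here $\cE(u,\psi)$ is understood in the sense of \pref{exttest}, which applies because $u$ satisfies \thetag{A}. The additivity $\cE(u,\psi)=\cE(v,\psi)+\cE(h,\psi)$ holds because each of the three terms in the Beurling--Deny expression of \pref{exttest} is bilinear and finite for $v\in\cF^{D_1}_e$ and for $h$, which satisfies \thetag{A}. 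The task is thus to extend the harmonic map equation \eqref{harmonicmapeq} from the test class $\Pi_u\cC_D(\bbR^d)$ to all bounded sections $\psi\in\cF^{D_1}(u^*TM)$.

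I will approximate $\psi$ by elements of $\Pi_u\cC_D(\bbR^d)$. Fix such a $\psi$. Since $\cC$ is a special standard core, I can choose $\phi_k\in\cC_D(\bbR^d)$ with the following properties: $\phi_k\to\psi$ in $\cE_1$; $\phi_k\to\psi$ $\bdm$-a.e.; $\sup_k\|\phi_k\|_\infty<\infty$; and $\mathrm{supp}\,\phi_k\subset D_1'$ for a fixed relatively compact open $D_1'$ with $\overline{D_1}\subset D_1'\subset\overline{D_1'}\subset D_2$. The uniform bound comes from truncating by a smooth normal contraction, and the support condition from multiplying by a cutoff in $\cC$ equal to $1$ on $\overline{D_1}$. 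Set $\psi_k:=\Pi_u\phi_k\in\Pi_u\cC_D(\bbR^d)$, so that $\cE(u,\psi_k)=0$. Because $\Pi_u\psi=\psi$ a.e., we get $\psi_k-\psi=\Pi_u(\phi_k-\psi)$, and it suffices to show $\cE(u,\Pi_u(\phi_k-\psi))\to0$.

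The main obstacle is this convergence, because $\Pi_u$ is only as regular as $u\in\cF^D_{loc}\cap L^\infty_{loc}$, and $\Pi_u(\phi_k-\psi)$ need not converge to $0$ in $\cE_1$. To handle it, I will use the second part of \pref{exttest} with a cutoff $\phi_{D_2}$ (equal to $1$ on $\overline{D_2}$) in order to replace $u$ by $\bar u:=\phi_{D_2}u\in\cF$; the correction term is an integral against $\sJ$ over $D_1'\times(E\setminus D_2)$, which converges by dominated convergence using \thetag{A}. For $\cE(\bar u,\Pi_u f_k)$ with $f_k=\phi_k-\psi$, I will split into the strongly local part and the jump part.

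\textbf{Local part.} The derivation property of energy measures gives $d\mu^c_{\langle \bar u,(\Pi_u)f_k\rangle}=\Pi_u\,d\mu^c_{\langle\bar u,f_k\rangle}+f_k\,d\mu^c_{\langle \bar u,\Pi_u\rangle}$. The first term is controlled by Cauchy--Schwarz and $\cE(f_k)\to0$. The second tends to $0$ by dominated convergence, since $f_k$ is uniformly bounded and converges to $0$ q.e. along a subsequence, and the total-variation measure $|\mu^c_{\langle\bar u,\Pi_u\rangle}|$ is finite on $D_1'$.

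\textbf{Jump part.} I write
\[
\Pi_{u(z)}f_k(z)-\Pi_{u(w)}f_k(w)=\Pi_{u(z)}(f_k(z)-f_k(w))+(\Pi_{u(z)}-\Pi_{u(w)})f_k(w).
\]
The first piece is bounded by $\cE(\bar u)^{1/2}\cE(f_k)^{1/2}$. For the second, I use the Lipschitz continuity of $\Pi$ on a compact neighbourhood of the essential range of $u$ on $D_2$ (which exists since $M$ is properly embedded), so that it is bounded by $C|u(z)-u(w)|^2|f_k(w)|$ on $D_2\times D_2$. This is dominated by $C'|u(z)-u(w)|^2$, which is $\sJ$-integrable, and it converges to $0$ a.e., so dominated convergence applies. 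Off $D_2\times D_2$, $f_k(w)$ vanishes unless $w\in D_1'$, and \thetag{A} together with boundedness handles the remaining region. Combining these limits gives $\cE(u,\psi)=\lim_k\cE(u,\psi_k)=0$, and hence $\cE(v,\psi)=0$.
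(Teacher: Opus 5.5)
Your proof is correct, but it takes a more hands-on route than the paper.

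\emph{The paper's route.} The paper never returns from $v$ to $u$. It writes $\psi=\Pi_u\psi=\sum_{i=1}^d\psi_iP_i$ with the fixed bounded frame $P_i:=\Pi_u\ep_i$, and approximates only the scalar coefficients $\psi_i\in\cF^{D_1}\cap L^\infty(E)$ by uniformly bounded $l_k\in\cC_{D_1}$ converging in $\cE_1$ and q.e. It then observes that $l_kP_i=\Pi_u(l_k\ep_i)$ is itself an admissible test function, so $\cE(v,l_kP_i)=0$ by \eqref{harmonicmapeq} and \eqref{heq}. Finally it gets $\cE(v,\psi_iP_i)=0$ from $l_kP_i\to\psi_iP_i$ in $\cE_1$ and $v\in\cF^{D_1}_e$, and sums over $i$. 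Because the limit is taken in $\cE(v,\cdot)$, with $v$ a genuine element of the extended Dirichlet space, no cutoff of $u$ is needed. Nor does the \thetag{A}-controlled tail term from \pref{exttest} have to be passed to the limit.

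\emph{Your route.} You approximate $\psi$ as a vector and reduce to $\cE(u,\cdot)$. This forces you to localize to $\phi_{D_2}u$ and to control the $\sJ$-tail over $D_1'\times(E\setminus D_2)$. In exchange, your estimates spell out exactly what the paper compresses into ``due to the Beurling--Deny decomposition'': the derivation property for the local part, the Lipschitz bound $|\Pi_{u(z)}-\Pi_{u(w)}|\le C|u(z)-u(w)|$ on the compact part of $M$ containing $u(D_2)$, and $\sJ(D_1'\times(E\setminus D_2))<\infty$.

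\emph{Two remarks.} First, your assertion that $\Pi_u(\phi_k-\psi)$ need not tend to $0$ in $\cE_1$ is too pessimistic. With the uniform bound, common compact support and q.e.\ convergence you arranged, the same estimates applied with $\Pi_uf_k$ in both slots give $\Pi_uf_k\to0$ in $\cE_1$. That would let you pair directly with $v\in\cF^{D_1}_e$ and drop the detour through $u$. Second, in the jump part you need q.e.\ convergence of quasi-continuous versions, not merely $\bdm$-a.e.\ convergence. The marginal $\mu_H$ of $\sJ$ charges no set of zero capacity, but it need not be absolutely continuous with respect to $\bdm$.
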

\begin{proof}
Let $\{\ep_i \}_{i=1}^d$ be the canonical basis on $\mathbb{R}^d$ and set $P_i(z):=\Pi_{u(z)}\ep_i$. Let $l \in \cF^{D_1}\cap L^{\infty}(E)$. Then there exists a uniformly bounded sequence $l_k \in \cC_{D_1}$ such that
\[
l_k \to l\ \text{in}\ \cE_1,\ \tilde{l}_k \to \tilde{l}\ \text{q.e.}
\]
by Theorem 4.4.3 of \cite{FOT}.
For each $k$, $l_k P_i=\Pi_{u}l_k\ep_i \in \Pi_u\cC_{D_1}$. Hence by \eqref{harmonicmapeq} and \eqref{heq}, we have
\[
\cE(v,l_kP_i)=0.
\]
Therefore, by letting $k \to \infty$, we have
\begin{align}\label{EvlP}
\cE(v,lP_i)=0
\end{align}
since $l_kP_i \to l P_i$ in $\cE_1$ due to the Beurling-Deny decomposition.
Let $\psi \in \cF^{D_1}(u^*TM) \cap L^{\infty}(E)$. Then
\[
\psi = \Pi_u \psi=\sum_{i=1}^d\psi_iP_i.
\]
Since $\psi_i \in \cF^{D_1}\cap L^{\infty}(E)$ for each $i$, we have
\[
\cE(v,\psi)=\sum_{i=1}\cE(v,\psi_iP_i)=0
\]
by \eqref{EvlP}.
\end{proof}
We denote the generator associated with $(\cE,\cF)$ on $L^2(E;\bdm)$ by $\cL$ with its domain $\cD(\cL)$. In general, $\cL$ is a non-positive self-adjoint operator on $L^2(E;\bdm)$.
\begin{lem}\label{DomGen}
Let $D \subset E$ be an open set. Let $u \in \cF_{loc}^D \cap L^{\infty}_{loc}(D)$ be a Borel measurable function satisfying (A). Then for each $v \in C_0(D)\cap \cD(\cL)$, it holds that
\[
\cE(u,v)=\langle u, v_0-v \rangle,
\]
where $v_0=(I-\cL)v$.
\end{lem}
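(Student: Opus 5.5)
The identity $\cE(u,v)=\langle u,(I-\cL)v\rangle-\langle u,v\rangle$ is clear when $u\in\cF$, since then $\cE(u,v)=-\langle u,\cL v\rangle$ for $v\in\cD(\cL)$. The plan is to reduce to this case by cutting $u$ off near the support of $v$ and controlling the tail with \pref{exttest} and condition \thetag{A}.

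\textbf{Setup.} Fix $v\in C_0(D)\cap\cD(\cL)$. Choose relatively compact open sets $D_1,D_2$ as in \eqref{opensets} with $\mathrm{supp}[v]\subset D_1$. Since $v\in\cD(\cL)\subset\cF$ and $\tilde v=v=0$ off $D_1$, we have $v\in\cF^{D_1}\cap L^\infty(E)$. Take $\phi\in\cF\cap C_0(D)$ with $0\le\phi\le1$ and $\phi=1$ on $\overline{D_2}$.

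\textbf{Step 1 (splitting).} By \pref{exttest},
\[
\cE(u,v)=\cE(\phi u,v)-\int_{D_1\times(E\backslash D_2)}v(z)\,((1-\phi)u)(w)\,\sJ(dzdw).
\]
Here $\phi u\in\cF$: it is compactly supported in $D$, $u$ is locally bounded, and $u=u_{D'}$ near $\mathrm{supp}\,\phi$ with $u_{D'}\in\cF^D$. Since $v\in\cD(\cL)$, this gives
\[
\cE(\phi u,v)=\langle \phi u,-\cL v\rangle=\langle\phi u,v_0-v\rangle .
\]

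\textbf{Step 2 (tail term).} It remains to show
\[
\langle(1-\phi)u,\,v_0-v\rangle=-\int_{D_1\times(E\backslash D_2)}v(z)\,((1-\phi)u)(w)\,\sJ(dzdw).
\]
Note that $(1-\phi)u$ vanishes on $D_2$, while $v$ is supported in $D_1$, so $\langle(1-\phi)u,v\rangle=0$. Thus the task is to identify $\langle(1-\phi)u,-\cL v\rangle$ with the jump integral. This is the main obstacle, because $(1-\phi)u$ is not in $\cF$ and need not be integrable.

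\textbf{Handling the obstacle.} Approximate $w:=(1-\phi)u$ by truncations $w_n:=w\,\mathbf{1}_{K_n}$ (or smooth cutoffs $w\,\chi_n$ with $\chi_n\in C_0(E)$), where $K_n\uparrow E\backslash D_2$ are compact.
\begin{itemize}
\item For each $n$, the function $w_n$ is bounded with compact support disjoint from $\overline{D_2}$, and $v$ is supported in $D_1$. Using $\cL v=\lim_{t\to0}(p_tv-v)/t$ in $L^2$, we get
\[
\langle w_n,\cL v\rangle=\lim_{t\to0}\frac1t\langle w_n,p_tv\rangle .
\]
\item By the Beurling--Deny/Lévy system identification of $\sJ$ (the standard fact that $\frac1t\langle f,p_tg\rangle\to\int f(w)g(z)\,\sJ(dzdw)$ for $f,g$ with disjoint compact supports, for $f,g\in\cF$ bounded, e.g.\ via $\cE(f,g)=-\int f(w)g(z)\,\sJ(dzdw)$ when the supports are disjoint), this equals $\int v(z)w_n(w)\,\sJ(dzdw)$. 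To place $w_n$ in $\cF$, first mollify it to a $C_0$ function in $\cF$ using the regularity of the form, then pass to the limit in $L^2$ and in $\sJ$-integrals.
\item Finally let $n\to\infty$. On the jump side, \thetag{A} gives integrability of $|u(w)|$ over $D_1\times(E\backslash D_2)$, so dominated convergence applies. On the $L^2$ side, the pairing is finite because $v_0-v=-\cL v$ vanishes off $\mathrm{supp}[v]$ in the appropriate sense: $\langle w_n,\cL v\rangle$ equals a jump integral bounded by $\|v\|_\infty\int_{D_1\times(E\backslash D_2)}|u|\,d\sJ$. Hence $\cL v\cdot w$ is interpreted through this limit, and the two sides agree.
\end{itemize}

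Combining Steps 1 and 2 yields $\cE(u,v)=\langle u,v_0-v\rangle$.
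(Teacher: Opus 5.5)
Your route is the paper's: cut off with $\phi$, handle $\phi u$ through $\cE(\phi u,v)=-\langle \phi u,\cL v\rangle$, and identify the tail by matching $(v_0-v)\,\bdm$ with the measure $-\int v(z)\,\sJ(dz\,dw)$ away from $\mathrm{supp}[v]$, testing against elements of $\cF$ supported there. The paper does this in one step, as an identity of Radon measures on $E\backslash G$ tested against $l\in\cF$ with $l=0$ on $G$. Your truncation and approximation in Step 2 is a hands-on version of the same argument.

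The last bullet, however, does not close the argument as written.

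First, $v_0-v=-\cL v$ does not vanish off $\mathrm{supp}[v]$ in any sense for a non-local form. There it is precisely the density, with respect to $\bdm(dw)$, of the generally nonzero measure $-\int v(z)\,\sJ(dz\,dw)$.

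Second, a bound on $|\langle w_n,\cL v\rangle|$ along one approximating sequence does not show $(1-\phi)u\,(v_0-v)\in L^1(\bdm)$. ``Interpreting'' $\langle w,\cL v\rangle$ as a limit changes the statement. The lemma asserts an honest Lebesgue pairing, and \lref{HInvariance} later uses its finiteness in order to apply \thetag{C-1}.

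The repair uses what you already have:
\begin{enumerate}
\item Your approximation argument gives $\langle f,\cL v\rangle=\int v(z)f(w)\,\sJ(dzdw)$ for every bounded Borel $f$ with compact support in $E\backslash\overline{D_2}$.
\item This is an equality of two signed Radon measures on $E\backslash\overline{D_2}$, hence also of their total variations. So $|\cL v|(w)\,\bdm(dw)\le\|v\|_\infty\,\sJ(D_1\times dw)$ there.
\item Combined with \thetag{A}, this gives $\int_E|(1-\phi)u|\,|v_0-v|\,d\bdm\le\|v\|_\infty\int_{D_1\times(E\backslash D_2)}|u(w)|\,\sJ(dzdw)<\infty$.
\item Dominated convergence then applies on the $L^2$ side as well.
\end{enumerate}

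Also, you must truncate $w_n$ in value and not only in space. The function $u$ is only locally bounded inside $D$, while $(1-\phi)u$ is supported largely outside $D$, so your claim that $w_n$ is bounded fails without this.
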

\begin{proof}
Let $G \subset D$ be a relatively compact open subset with $\mathrm{supp}[v] \subset G \subset \overline{G} \subset D$.
Let $\phi \in C_0(D) \cap \cF$ satisfying
\[
0 \leq \phi \leq 1,\ \phi=1\ \text{on}\ G.
\]
Then under the assumption for $v$, we have
\begin{align}
\cE(u,v)&=\frac{1}{2}\mu_{\langle u,v \rangle}^c(D)+\frac{1}{2}\int_{E \times E}(u(z)-u(w))(v(z)-v(w))\, \sJ(dzdw) + \int_D u(z) v(z)\, \sk(dz) \nonumber\\
&=\frac{1}{2}\mu_{\langle \phi u,v \rangle}^c(D)+\frac{1}{2}\int_{E \times E}(\phi u(z)-\phi u(w))(v(z)-v(w))\, \sJ(dzdw)+\int_D \phi u(z) v(z)\, \sk(dz) \nonumber\\
&\h - \int_{E \backslash G} (1-\phi)u(z) \int_G v(w)\, \sJ(dzdw) \nonumber\\
&= \cE(\phi u,v)-\int_E (1-\phi)u(z) \int_G v(w)\, \sJ(dzdw).\label{decEuv}
\end{align}
Since both $\phi u$ and $v$ are in $\cF^D$, it holds that
\begin{align}\label{phiuv}
\cE(\phi u,v)=\langle \phi u, v_0-v \rangle.
\end{align}
On the other hand, for each $l \in \cF$ with $l=0$ on $G$,
\begin{align*}
-\int_E l(z) \int_G v(w)\, \sJ(dzdw)&= \cE(l,v)\\
&=\langle l, v_0-v \rangle.
\end{align*}
Therefore, $-\int_G v(w)\, J(dzdw)=(v_0(z)-v(z))\, \bdm(dz)$ as Radon measures on $E \backslash G$. Since $(1-\phi)u$ is integrable with respect to those measures due to \thetag{A}, we have
\begin{align}\label{(1-phi)uv}
- \int_E (1-\phi)u(z) \int_G v(w)\, J(dzdw) =\int_E (1-\phi)u (v_0-v)\, d\bdm.
\end{align}
Therefore, by \eqref{decEuv}, \eqref{phiuv} and \eqref{(1-phi)uv}, we complete the proof.
\end{proof}
The following Lemma concerns some properties of Dirichlet forms satisfying condition \thetag{C}.
\begin{lem}\label{E1dense}
Let $(\cE,\cF)$ be a regular Dirichlet form on $L^2(E;\bdm)$ satisfying \thetag{C} for some vector field $\cH \in \mathfrak{X}(E)$. Let $\Phi_s$ be the local flow generated by $\cH$. Let $D$ be an open set and $D_1$ a relatively compact open set with $\overline{D_1}\subset D$. Fix $\delta>0$ and a compact set $K$ with $\overline{D_1} \subset K \subset D$ in such a way that
\[
\Phi_s(\overline{D_1})\subset K \ \text{for all}\ s\in [-\delta, \delta].
\]
Then the following hold.
\begin{enumerate}
\item For $s\in [-\delta, \delta]$, $\Phi_s^* \colon C_0^{\infty} (D_1) \to C_0^{\infty}(D)$ extends to an $\cE_1$-isometry from $\cF^{D_1}$ to $\cF^D$ and $s \mapsto \Phi_s^*\phi$ is $\cE_1$-continuous for $\phi \in \cF^{D_1}$ on $[-\frac{\delta}{2},\frac{\delta}{2}]$.
\item For $\phi \in \cF^{D_1}$ with $\cH \phi \in \cF^{D_1}$, there exists a sequence $\phi_k \in C_0^{\infty}(D)$ such that $\phi_k \to \phi$ and $\cH \phi_k \to \cH \phi$ as $k \to \infty$ in $\cE_1$.
\end{enumerate}
\end{lem}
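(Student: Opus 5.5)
Part 1 rests on the fact that $\Phi_s^*$ commutes with the semigroup. Condition (C-1) says that $\cH$ is skew-adjoint with respect to $\bdm$, so $\Phi_s$ preserves $\bdm$. Hence $\Phi_s^*$ is an $L^2$-isometry on functions supported where the flow is defined.

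Next I will show that $\Phi_s^*$ preserves $\cE$ on $C_0^\infty(D_1)$. Fix $\phi\in C_0^\infty(D_1)$ and set $F(s)=\cE(\Phi_s^*\phi,\Phi_s^*\phi)=-\langle \cL\Phi_s^*\phi,\Phi_s^*\phi\rangle$. Here I use that $C_0^\infty(E)$ is an operator core, so $\cL$ acts on $\Phi_s^*\phi\in C_0^\infty(D)$. Differentiating gives
\[
F'(s)=-\langle \cL\cH\Phi_s^*\phi,\Phi_s^*\phi\rangle-\langle \cL\Phi_s^*\phi,\cH\Phi_s^*\phi\rangle ,
\]
since $\frac{d}{ds}\Phi_s^*\phi=\cH\Phi_s^*\phi$. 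By (C-2) the first term equals $-\langle \cH\cL\Phi_s^*\phi,\Phi_s^*\phi\rangle$. By (C-1) this becomes $\langle \cL\Phi_s^*\phi,\cH\Phi_s^*\phi\rangle$, which cancels the second term, so $F'\equiv0$. The integrability hypotheses of (C-1) hold because $\Phi_s^*\phi$ has compact support and $\cH\cL\phi\in L^2$.

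The main obstacle is justifying this differentiation in $s$ rigorously. I would first try to avoid it: show instead that $\Phi_s^*$ commutes with the resolvent $(I-\cL)^{-1}$, or prove $\frac{d}{ds}\langle \cL\Phi_s^*\phi,\Phi_s^*\psi\rangle=0$ in weak form using only $C_0^\infty$ test functions. The isometry on $C_0^\infty(D_1)$ then extends by density to $\cF^{D_1}$, since $C_0^\infty(D_1)$ is $\cE_1$-dense there by the core assumption. Continuity of $s\mapsto\Phi_s^*\phi$ follows by the $\varepsilon/3$ argument: use uniform boundedness of the isometries, approximate $\phi$ by smooth functions, and note that for smooth $\phi$, $\Phi_s^*\phi\to\Phi_{s_0}^*\phi$ in $C_0^\infty$. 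For the $\cE$-part of that last convergence, write $\cE(f,f)=-\langle \cL f,f\rangle$. Then $\cL\Phi_s^*\phi=\Phi_s^*\cL\phi$, obtained by integrating the commutation in (C-2) along the flow, and this converges in $L^2$ by continuity of translation under a measure-preserving flow. The restriction to $[-\delta/2,\delta/2]$ leaves room so that the supports of all functions involved stay inside $K$.

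For part 2, I mollify along the flow and then smooth in space. Take $\rho_\varepsilon$ to be a mollifier on $\mathbb{R}$ supported in $[-\varepsilon,\varepsilon]$ with $\varepsilon<\delta/2$, and set $\phi^\varepsilon=\int\rho_\varepsilon(s)\Phi_s^*\phi\,ds$. This is a Bochner integral in $\cF^D$, which makes sense by part 1. Then $\cH\phi^\varepsilon=-\int\rho_\varepsilon'(s)\Phi_s^*\phi\,ds=\int\rho_\varepsilon(s)\Phi_s^*\cH\phi\,ds$, where the second identity uses (C-1) distributionally. Since $\cH\phi\in\cF^{D_1}$, part 1 gives $\phi^\varepsilon\to\phi$ and $\cH\phi^\varepsilon\to\cH\phi$ in $\cE_1$. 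It remains to approximate a fixed $\phi^\varepsilon$. Choose $\psi_k\in C_0^\infty(D_1)$ with $\psi_k\to\phi$ in $\cE_1$, and set $\psi_k^\varepsilon=\int\rho_\varepsilon\Phi_s^*\psi_k\,ds\in C_0^\infty(D)$. Then
\[
\psi_k^\varepsilon\to\phi^\varepsilon,\qquad \cH\psi_k^\varepsilon=-\int\rho_\varepsilon'\Phi_s^*\psi_k\,ds\to\cH\phi^\varepsilon ,
\]
both in $\cE_1$, because each $\Phi_s^*$ is an isometry and the kernels $\rho_\varepsilon,\rho_\varepsilon'$ are integrable. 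A diagonal choice over $\varepsilon$ and $k$ yields the required sequence $\phi_k$.
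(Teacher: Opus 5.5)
Your architecture matches the paper's. In part 1 you show $s\mapsto\cE_1(\Phi_s^*\phi,\Phi_s^*\phi)$ is constant via \thetag{C-1} and \thetag{C-2}, extend by density, and prove continuity by an $\ep/3$ argument. Part 2 is exactly the paper's flow mollification, with the two formulas for $\cH\phi_\ep$, the bound by $\|\rho_\ep'\|_{L^1}\|\psi_k-\phi\|_{\cE_1}$, and a diagonal choice; it is fine.

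The gap is the step you flag yourself in part 1, and it stays open. Your formula for $F'(s)$ needs regularity of $s\mapsto\cL\Phi_s^*\phi$ in $L^2(E;\bdm)$. You need either the differentiability you wrote down, or at least local boundedness of $\|\cL\Phi_s^*\phi\|_{L^2}$ so that a difference-quotient argument with self-adjointness goes through. But $\cL$ is an abstract closed operator, and \thetag{C} only gives $C_0^\infty(E)\subset\cD(\cL)$ and the identity $\cH\cL=\cL\cH$ on $C_0^\infty(E)$.

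Your workarounds do not escape this:
\begin{itemize}
\item Commuting $\Phi_s^*$ with $(I-\cL)^{-1}$ amounts to $\cL\Phi_s^*\phi=\Phi_s^*\cL\phi$ on a core. Deriving that by integrating \thetag{C-2} along the flow again requires differentiating $s\mapsto\cL\Phi_s^*\phi$. The same circularity affects your continuity argument, which uses this commutation.
\item A weak-form argument against $\eta\in\cD(\cL)$ needs precisely the missing local boundedness. Duality only bounds $\langle\cL\Phi_s^*\phi,\eta\rangle=\langle\Phi_s^*\phi,\cL\eta\rangle$ by a graph norm of $\eta$, not by $\|\eta\|_{L^2}$.
\end{itemize}

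The missing ingredient, which is how the paper proceeds, is the closed graph theorem for Fr\'echet spaces. Give $C_K^\infty(D)$, the smooth functions supported in $K$, its Fr\'echet topology. Suppose $\phi_k\to\phi$ in $C_K^\infty(D)$ and $\cL\phi_k\to f$ in $L^2$. Then $\phi_k\to\phi$ in $L^2$, so $\cL\phi=f$ because $\cL$ is closed. Hence $\cL\colon C_K^\infty(D)\to L^2(E;\bdm)$ has closed graph and is continuous.

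For $\phi\in C_0^\infty(D_1)$, the curve $s\mapsto\Phi_s^*\phi$ is differentiable in $C_K^\infty(D)$ with derivative $\cH\Phi_s^*\phi$; the supports stay in $K$ by the choice of $\delta$. Therefore $\frac{d}{ds}\cL\Phi_s^*\phi=\cL\cH\Phi_s^*\phi$ in $L^2$, which justifies your computation of $F'$ and its polarized version.

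The same continuity also repairs your continuity argument without any commutation identity. You already note $\Phi_s^*\phi\to\Phi_{s_0}^*\phi$ in $C_0^\infty$, hence $\cL\Phi_s^*\phi\to\cL\Phi_{s_0}^*\phi$ in $L^2$. The paper takes an even lighter route there. Once the isometry is known, $\|\Phi_s^*\phi-\Phi_{s_0}^*\phi\|_{\cE_1}^2=2\langle\phi-\Phi_{s-s_0}^*\phi,(I-\cL)\phi\rangle_{L^2(E;\bdm)}$, which tends to $0$ using only the $L^2$-continuity of $r\mapsto\Phi_r^*\phi$.
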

\begin{proof}
\begin{enumerate}
\item
We set
\[
C_K^{\infty}(D):=\{ \phi \in C_0^{\infty}(D)\mid \mathrm{supp}[\phi] \subset K\}.
\]
We show that $\cL \colon C^{\infty}_K(D)\to L^2(E;\bdm)$ is continuous, where we regard $C^{\infty}_K(D)$ as a Fr\'echet space. Let $\{\phi_k\}_{k \in \mathbb{N}}$ be a sequence in $C_K^{\infty}(D)$ and assume that $\phi_k \to \phi$ in $C_K^{\infty}(D)$ and $\cL \phi_k \to f$ in $L^2(E;\bdm)$. Then $\phi_k \to \phi$ in $L^2(E;\bdm)$. Since $\cL$ is closed as an operator on $L^2(E;\bdm)$, we have $\cL \phi=f$. Therefore, $\cL \colon C^{\infty}_K(D)\to L^2(E;\bdm)$ is closed. By the closed graph theorem for Fr\'echet spaces, $\cL$ is continuous.
We take $\phi,\psi \in C_0^{\infty}(D_1)$. Then by the argument above,
\[
\frac{d}{ds}\cL \Phi_s^*\phi=\cL \cH \Phi_s^*\phi\ \text{in}\ L^2(E;\bdm).
\]
Therefore, it holds that
\begin{align*}
\frac{d}{ds}\cE(\Phi_s^*\phi, \Phi_s^*\psi)&=-\frac{d}{ds} \langle \Phi_s^* \phi, \cL \Phi_s^* \psi \rangle_{L^2(E;\bdm)}\\
&=- \langle \cH \Phi_s^*\phi, \cL \Phi_s^*\psi \rangle_{L^2(E;\bdm)}- \langle \Phi_s^*\phi, \cL \cH \Phi_s^*\psi \rangle_{L^2(E;\bdm)}\\
&=0.
\end{align*}
Here in the last equality, we used \thetag{C-1} and \thetag{C-2}. We also have
\[
\frac{d}{ds}\langle \Phi_s^*\phi, \Phi_s^*\psi \rangle_{L^2(E;\bdm)}=\langle \cH \Phi_s^*\phi, \Phi_s^*\psi \rangle_{L^2(E;\bdm)}+\langle \Phi_s^*\phi, \cH \Phi_s^*\psi \rangle_{L^2(E;\bdm)}=0.
\]
Therefore, $s \to \cE_1(\Phi_s^*\phi, \Phi_s^*\psi)$ is constant. In particular, it holds that $\cE_1(\Phi_s^*\phi, \Phi_s^*\psi)=\cE_1(\phi,\psi)$ for $s$ with $|s|\leq \delta$. By condition \thetag{C}, $C_0^{\infty}(E)$ is a special standard core of $(\cE,\cF)$. Thus, by Theorem 4.4.3 of \cite{FOT}, $C_0^{\infty}(D_1)$ is a core of the part form of $(\cE,\cF)$ on $D_1$, which means that $C_0^{\infty}(D_1)$ is $\cE_1$-dense in $\cF^{D_1}$. Therefore, $\Phi_s^*$ can be extended to an isometry from $\cF^{D_1}$ to $\cF^D$.
Moreover, for $\phi \in C_0^{\infty}(D_1)$ and sufficiently small $s,s_0 \in [-\frac{\delta}{2},\frac{\delta}{2}]$,
\begin{align*}
\| \Phi_s^*\phi-\Phi_{s_0}^*\phi \|^2_{\cE_1}&=\cE_1(\Phi_s^*\phi, \Phi_s^*\phi)-2 \cE_1(\Phi_s^*\phi, \Phi_{s_0}^*\phi)+\cE_1(\Phi_{s_0}^*\phi,\Phi_{s_0}^*\phi)\\
&=2(\cE_1(\phi,\phi)-\cE_1(\Phi_{s-s_0}^*\phi, \phi))\\
&=2\langle \phi- \Phi_{s-s_0}^*\phi, \phi - \cL \phi \rangle_{L^2(E;\bdm)}\\
&\to 0\ \text{as}\ s \to s_0,
\end{align*}
where we used the $L^2$-continuity of $s \mapsto \Phi_s^*\phi$ in the third equality. Therefore, $s \mapsto \Phi_s^*\phi$ is $\cE_1$-continuous. Next, for $\phi \in \cF^{D_1}$, we take a sequence $\phi_k \in C_0^{\infty}(D_1)$ with $\phi_k \to \phi$ in $\cE_1$. Then for sufficiently small $s,s_0 \in [-\frac{\delta}{2},\frac{\delta}{2}]$,
\begin{align*}
\|\Phi_s^*\phi-\Phi_{s_0}^*\phi\|_{\cE_1} &\leq \|\Phi_s^*\phi-\Phi_s^*\phi_k\|_{\cE_1} + \|\Phi_s^*\phi_k-\Phi_{s_0}^*\phi_k\|_{\cE_1} + \|\Phi_{s_0}^*\phi_k-\Phi_{s_0}^*\phi\|_{\cE_1}\\
&= 2\|\phi-\phi_k\|_{\cE_1} + \|\Phi_s^*\phi_k-\Phi_{s_0}^*\phi_k\|_{\cE_1}.
\end{align*}
By the argument above, letting $s\to s_0$, we have
\[
\lim_{s \to s_0}\|\Phi_s^*\phi-\Phi_{s_0}^*\phi\|_{\cE_1} \leq 2\|\phi-\phi_k\|_{\cE_1}
\]
for each $k$. Then letting $k \to \infty$, we have
\[
\lim_{s \to s_0}\|\Phi_s^*\phi-\Phi_{s_0}^*\phi\|_{\cE_1}=0.
\]
Therefore, $s\mapsto \Phi_s^*\phi$ is $\cE_1$-continuous for each $\phi \in \cF^{D_1}$.
\item Let $\rho \in C^{\infty}_0(\mathbb{R})$ be a mollifier with
\[
\rho \geq 0,\ \mathrm{supp}[\rho]\subset (-1,1),\ \int_{\mathbb{R}} \rho (t)\, dt =1
\]
and set $\dis \rho_{\ep}(t):= \frac{1}{\ep}\rho \left(\frac{t}{\ep}\right)$.
We fix $\phi \in \cF^{D_1}$ with $\cH \phi \in \cF^{D_1}$. Since $s \to \rho_{\ep}(s)\Phi_s^*\phi$ is $\cE_1$-continuous on $[-\frac{\delta}{2},\frac{\delta}{2}]$ by (1), we can define $\phi_{\ep} \in \cF^{D}$ for $\ep \in (0, \delta)$ by the Bochner integral
\begin{align}\label{Bochner}
\phi_{\ep}:=\int_{\mathbb{R}} \rho_{\ep}(s)\Phi_s^*\phi \, ds
\end{align}
in $\cF^{D}$. Then by (1),
\begin{align*}
\| \phi_{\ep} - \phi \|_{\cE_1} &\leq \int_{\mathbb{R}} \rho_{\ep}(s) \| \Phi_s^*\phi - \phi \|_{\cE_1}\, ds\\
&\to 0\ \text{as}\ \ep \to 0.
\end{align*}
Next, we compute the distributional derivative $\cH \phi_{\ep}$. For each $\eta \in C_0^{\infty}(D)$, $\langle \Phi_s^*\phi_{\ep}, \eta \rangle_{L^2(E;\bdm)}$ is differentiable at $s=0$ and
\begin{align}\label{diffphiep}
\left(\frac{d}{ds}\right)_{s=0} \langle \Phi_s^*\phi_{\ep}, \eta \rangle_{L^2(E;\bdm)}&= \left(\frac{d}{ds}\right)_{s=0}\langle \phi_{\ep}, \Phi_{-s}^*\eta \rangle_{L^2(E;\bdm)} \nonumber \\
&=-\langle \phi_{\ep}, \cH \eta \rangle_{L^2(E;\bdm)}\nonumber \\
&=-\int_{\mathbb{R}}\rho_{\ep}(s)\langle \Phi_s^*\phi, \cH \eta \rangle_{L^2(E;\bdm)}\, ds \nonumber \\
&=-\int_{\mathbb{R}} \rho_{\ep}(s)\langle \phi, \Phi_{-s}^* \cH \eta \rangle_{L^2(E;\bdm)}\, ds \nonumber \\
&=-\int_{\mathbb{R}} \rho_{\ep}(s)\langle \phi, \cH \Phi_{-s}^* \eta \rangle_{L^2(E;\bdm)}\, ds\\
&=\int_{\mathbb{R}} \rho_{\ep}(s)\langle \Phi_s^*\cH \phi, \eta \rangle_{L^2(E;\bdm)}\, ds.\nonumber
\end{align}
Therefore, we have
\[
\cH \phi_{\ep}=\int_{\mathbb{R}}\rho_{\ep}(s)\Phi_s^* \cH \phi \, ds.
\]
In particular, $\cH \phi_{\ep}\in \cF^{D}$ and
\begin{align*}
\| \cH \phi_{\ep}- \cH \phi \|_{\cE_1} &\leq \int_{\mathbb{R}} \rho_{\ep}(s) \| \Phi_s^* \cH \phi-\cH \phi \|_{\cE_1}\, ds\\
&\to 0\ \text{as}\ \ep \to 0
\end{align*}
by (1). On the other hand, from \eqref{diffphiep}, we also have
\begin{align*}
\langle \cH \phi_{\ep}, \eta \rangle_{L^2(E;\bdm)}&=\left\langle \phi, \int_{\mathbb{R}} \rho_{\ep}(s) \frac{d}{ds}\Phi_{-s}^* \eta \, ds \right\rangle_{L^2(E;\bdm)}\\
&=-\left\langle \phi, \int_{\mathbb{R}} \rho'_{\ep}(s) \Phi_{-s}^* \eta \, ds \right\rangle_{L^2(E;\bdm)}\\
&=-\left\langle \int_{\mathbb{R}}\rho'_{\ep}(s)\Phi_s^* \phi \, ds, \eta \right\rangle_{L^2(E;\bdm)}.
\end{align*}
Therefore, we have another representation
\[
\cH \phi_{\ep}=-\int_{\mathbb{R}}\rho'_{\ep}(s)\Phi_s^* \phi \, ds.
\]
Take a sequence $\{\psi_k\}_{k\in \mathbb{N}} \subset C_0^{\infty}(D_1)$ such that $\psi_k \to \phi$ in $\cE_1$. Then
\begin{align*}
\| \psi_{k,\ep}-\phi_{\ep} \|_{\cE_1}&\leq \int_{\mathbb{R}}\rho_{\ep}(t)\| \Phi_t^*(\psi_k-\phi) \|_{\cE_1} \, dt\\
&=\| \psi_k-\phi \|_{\cE_1},
\end{align*}
where $\psi_{k,\ep}$ is defined in the same way as \eqref{Bochner} for $\psi_k$ and we used the consequence of (1). In the same way, we also have
\begin{align*}
\| \cH \psi_{k,\ep}- \cH \phi_{\ep} \|_{\cE_1} &\leq \int_{\mathbb{R}}|\rho'_{\ep}(t)|\| \Phi_t^*(\psi_k-\phi) \|_{\cE_1}\, dt\\
&=\| \rho'_{\ep} \|_{L^1(\mathbb{R})} \| \psi_k-\phi \|_{\cE_1},
\end{align*}
For each $\theta >0$, we choose $\ep>0$ so small that
\[
\|\phi_{\ep}-\phi \|_{\cE_1}+\| \cH \phi_{\ep} - \cH \phi \|_{\cE_1} < \frac{\theta}{2}.
\]
For such $\ep$, we take sufficiently large $k \in \mathbb{N}$ such that
\[
\| \psi_{k,\ep}-\phi_{\ep} \|_{\cE_1}+\| \cH \psi_{k,\ep}-\cH \phi_{\ep} \|_{\cE_1} < \frac{\theta}{2}.
\]
Then $\psi_{k,\ep} \in C_0^{\infty}(D)$ satisfies
\[
\|\psi_{k,\ep}-\phi \|_{\cE_1}+\| \cH \psi_{k,\ep} - \cH \phi\|_{\cE_1} < \theta.
\]
This completes the proof of (2).
\end{enumerate}
\end{proof}

\begin{lem}\label{HInvariance}
Let $D$ be an open set. Let $D_1,D_2$ be relatively compact open sets in $E$ satisfying \eqref{opensets}. Let $u \in \cF^{D}_{loc}\cap L^{\infty}_{loc}(D)$ satisfy \thetag{A} and \thetag{B}. We assume that the Dirichlet form $(\cE,\cF)$ satisfies \thetag{C} with a vector field $\cH \in \mathfrak{X}(E)$.
We further suppose that $\cH u$ is in $\cF^{D}_{loc}\cap L^{\infty}_{loc}(D)$ and satisfies \thetag{A} and \thetag{B}. We set $\bu=(u,\cH u)$. Let $\bu=\bv+\bh$ be the decomposition given by \eqref{HarmDecom} and set $v=\pi_1(\bv)$, $\bv_2=\pi_2(\bv)$. Then for each relatively compact open set $D_0$ with $\overline{D_0} \subset D_1$ and for all $\phi \in \cF^{D_0}$ with $\cH \phi \in \cF^{D_0}$,
\[
\cE(\bv_2,\phi)=-\cE(v,\cH \phi).
\]
\end{lem}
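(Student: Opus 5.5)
The plan is to reduce to smooth test functions with Lemma~\ref{E1dense}(2), strip off the harmonic parts with \eqref{heq}, and then move $\cH$ across $\cE$ with Lemma~\ref{DomGen} and conditions \thetag{C-1}, \thetag{C-2}.

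\textbf{Reduction to smooth test functions.} Write $\bh=(h,\bh_2)$, so that $v=u-h$ and $\bv_2=\cH u-\bh_2$, where $h,\bh_2$ are the harmonic parts in \eqref{HarmDecom} built from $D_1,D_2$. Both $v$ and $\bv_2$ lie in $\cF_e^{D_1}$. Hence $\phi\mapsto \cE(\bv_2,\phi)$ and $\psi\mapsto\cE(v,\psi)$ are $\cE_1$-continuous on $\cF^{D_1}$. Given $\phi\in\cF^{D_0}$ with $\cH\phi\in\cF^{D_0}$, I apply Lemma~\ref{E1dense}(2) with $D_0$ in place of $D_1$. Since $\overline{D_0}\subset D_1$, I choose $\delta$ small enough that $\Phi_s(\overline{D_0})\subset D_1$ for $|s|\le\delta$. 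The mollified approximants $\psi_{k,\ep}$ in that proof are supported in $\bigcup_{|s|\le\ep}\Phi_{-s}(\mathrm{supp}\,\psi_k)$, so for $\ep<\delta$ they lie in $C_0^\infty(D_1)$. They satisfy $\psi_{k,\ep}\to\phi$ and $\cH\psi_{k,\ep}\to\cH\phi$ in $\cE_1$. It therefore suffices to prove the identity for $\phi\in C_0^\infty(D_1)$.

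\textbf{Removing the harmonic parts.} For $\phi\in C_0^\infty(D_1)$, both $\phi$ and $\cH\phi$ are bounded elements of $\cF^{D_1}$. By \eqref{heq},
\[
\cE(\bv_2,\phi)=\cE(\cH u,\phi),\qquad \cE(v,\cH\phi)=\cE(u,\cH\phi).
\]
By \thetag{C}, $C_0^\infty(E)\subset\cD(\cL)$, so $\phi$ and $\cH\phi$ lie in $C_0(D)\cap\cD(\cL)$. Lemma~\ref{DomGen} applies to $u$ and to $\cH u$, both of which satisfy \thetag{A}, and gives
\[
\cE(\cH u,\phi)=-\langle \cH u,\cL\phi\rangle,\qquad \cE(u,\cH\phi)=-\langle u,\cL\cH\phi\rangle.
\]
Using \thetag{C-1} and then \thetag{C-2},
\[
-\langle \cH u,\cL\phi\rangle=\langle u,\cH\cL\phi\rangle=\langle u,\cL\cH\phi\rangle=-\cE(u,\cH\phi).
\]
Chaining these equalities gives $\cE(\bv_2,\phi)=-\cE(v,\cH\phi)$.

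\textbf{Main obstacle: justifying the integration by parts.} The delicate step is checking the integrability hypotheses of \thetag{C-1}: $u\,\cL\phi$, $(\cH u)\cL\phi$ and $u\,\cH\cL\phi$ must lie in $L^1(E;\bdm)$, even though $u$ is only locally bounded and $\cL\phi$ need not have compact support.
\begin{itemize}
\item I first take $\chi\in C_0^\infty(D)$ with $\chi=1$ on a neighborhood $G$ of $\Phi_s(\overline{D_1})$ and split $u=\chi u+(1-\chi)u$.
\item For the localized part $\chi u$, all the products are integrable because $\chi u$ and $\cH(\chi u)$ are bounded with compact support and $\cL\phi,\cH\cL\phi\in L^2$.
\item For the far part, the proof of Lemma~\ref{DomGen} identifies $\cL\phi\,d\bdm$ on $E\setminus G$ with $\int_G\phi(w)\,\sJ(\cdot\,dw)$; the same holds for $\cL\cH\phi=\cH\cL\phi$. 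Condition \thetag{A} then gives $\int(1-\chi)|u|\,|\cL\phi|\,d\bdm<\infty$, and the same for $\cH u$.
\item The pairing $\langle(1-\chi)u,\cH\cL\phi\rangle$ with $\cH$ moved across must be handled separately. On $E\setminus G$ I would verify it directly from the jump-kernel representation: there $\cH\cL\phi=\cL\cH\phi$ is again given by a jump integral against $\cH\phi$, which reduces the identity to the symmetry of $\sJ$ under the flow $\Phi_s$. That symmetry follows from the isometry property in Lemma~\ref{E1dense}(1).
\end{itemize}
If this last point proves awkward, I would instead apply \thetag{C-1} only to the compactly supported part $\chi u$ and show that the two far-part contributions agree through Lemma~\ref{DomGen} and the $\cE$-invariance of $\Phi_s^*$.
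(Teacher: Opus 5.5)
Your proposal is correct and is essentially the paper's proof: the paper likewise proves the identity for $\phi\in C_0^\infty(D_1)$ by chaining \lref{DomGen}, \thetag{C-1}, \thetag{C-2} and \eqref{heq}, and then extends it to general $\phi$ via \lref{E1dense}(2), using $v,\bv_2\in\cF_e^{D_1}$ (your check that the mollified approximants stay supported in $D_1$ is a detail the paper leaves implicit). Your last bullet is unnecessary: once \thetag{A} and local boundedness give integrability of $u\,\cL\phi$, $(\cH u)\,\cL\phi$ and $u\,\cL\cH\phi=u\,\cH\cL\phi$, you can apply \thetag{C-1} directly to $u$ and $\cL\phi$, with no need to split off $(1-\chi)u$ or to invoke flow-invariance of $\sJ$.
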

\begin{proof}
Let $\phi \in C_0^{\infty}(D_1)$.
Then by \lref{DomGen},
\begin{align*}
\cE(\cH u, \phi)&= -\langle \cH u, \cL \phi \rangle \\
&= \langle u, \cH \cL \phi \rangle \\
&= \langle u, \cL \cH \phi \rangle \\
&= -\cE(u, \cH \phi),
\end{align*}
where all the pairings above are finite by \lref{DomGen}. Thus by \eqref{heq}, we obtain
\[
\cE(\bv_2, \phi)=-\cE(v,\cH \phi).
\]
Next we take $\phi \in \cF^{D_0}$ with $\cH \phi \in \cF^{D_0}$. By \lref{E1dense}, we can take a sequence $\phi_k \in C_0^{\infty}(D_1)$ such that $\phi_k \to \phi$ and $\cH \phi_k \to \cH \phi$ in $\cE_1$. For each $k \in \mathbb{N}$, we have
\[
\cE(\bv_2, \phi_k)=-\cE (v, \cH \phi_k)
\]
by the argument above. Since $v, \bv_2 \in \cF^{D_1}_e$, by letting $k \to \infty$, we get the desired equality.
\end{proof}

\begin{lem}\label{Nbv}
Let $D$ be an open set. Let $D_1,D_2$ be relatively compact open sets in $E$ satisfying \eqref{opensets}. Let $u \in \cF^D_{loc}(M) \cap L^{\infty}_{loc}(D)$ be an $\cE$-harmonic map on $D$.
Assume that the Dirichlet form $(\cE,\cF)$ satisfies \thetag{C} with a vector field  $\cH$ on $E$.
In addition, assume that $\cH u \in \cF^{D}_{loc}\cap L^{\infty}_{loc}(D)$ and satisfies \thetag{A} and \thetag{B}. Set $\bu:=(u,\cH u)$.
Let
\[
\bu = \bv + \bh
\]
be the componentwise decomposition defined by \eqref{HarmDecom}. Let $v=\pi_1\circ \bv, \bv_2=\pi_2 \circ \bv$. 
Then it holds that for any $\mathcal{X}\in \mathfrak{X}(M)$,
\begin{align*} 
\int_0^{t\land \tau_{D_1}} \langle \mathcal{X}(u(Z_-)), dN^{[\bv_2]}\rangle_{\delta_{\mathbb{R}^d}} = -\int_0^{t\land \tau_{D_1}} \langle \II(\mathcal{X}\circ u, \cH u), dN^{[v]}\rangle_{\delta_{\mathbb{R}^d}}
\end{align*}
for $t\geq 0,\ \Prob_z\text{-a.s. for q.e.}\ z\in E$.
\end{lem}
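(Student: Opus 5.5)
Both sides are continuous additive functionals of zero energy on $[0,\tau_{D_1}]$. By the Nakao characterization recalled before \pref{ELeq}, such a functional is determined by the limits
\[
\lim_{t\to 0}\frac{1}{t}\Ex_{l\bdm}\left[\int_0^{t\land\tau_{D_1}}\cdot\right],\qquad l\in\cF^{D_1}\cap L^\infty(E).
\]
So I would compute these limits for the two sides and show they agree for every such $l$. By the same characterization, the left-hand side gives $-\cE(l\,\cX\circ u,\bv_2)$ and the right-hand side gives $\cE(l\,\II(\cX\circ u,\cH u),v)$, pairing componentwise in $\mathbb{R}^d$. The zero-energy parts of both functionals depend only on the a.e. values of the integrands, so I may use $u(Z)$ in place of $u(Z_-)$. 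The integrability conditions ($v\in L^2(\mu_{\langle\cdot\rangle})$, etc.) hold because $u$ and $\cH u$ are locally bounded and $M$ is properly embedded, so $\cX\circ u$ and $\II(\cX\circ u,\cH u)$ are bounded on a neighbourhood of $\overline{D_1}$. It therefore suffices to prove
\begin{align*}
\cE(\bv_2,\,l\,\cX\circ u)=-\cE(v,\,l\,\II(\cX\circ u,\cH u))\quad\text{for } l\in\cF^{D_1}\cap L^\infty(E).
\end{align*}

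\textbf{Step 1: reduce to smooth $l$ with compact support.} By density and Theorem~4.4.3 of \cite{FOT}, I would first take $l\in C_0^\infty(D_0)$ with $\overline{D_0}\subset D_1$. Both sides should be $\cE_1$-continuous in $l$ along uniformly bounded approximating sequences, exactly as in the proof of \pref{ELeq}, so the general case follows by passing to the limit.

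\textbf{Step 2: apply \lref{HInvariance} to the test function $\phi:=l\,\cX(u)$.} This requires $\phi\in\cF^{D_0}$ and $\cH\phi\in\cF^{D_0}$. The first holds since $u\in\cF^D_{loc}$ is bounded near $\overline{D_0}$ and $\cX$ is smooth. For the second, the chain rule gives
\[
\cH\phi=(\cH l)\,\cX(u)+l\,D\cX(u)[\cH u],
\]
which lies in $\cF^{D_0}$ because $\cH u\in\cF^D_{loc}\cap L^\infty_{loc}$. \lref{HInvariance} then gives
\[
\cE(\bv_2,\phi)=-\cE(v,\cH\phi)=-\cE\big(v,(\cH l)\cX(u)\big)-\cE\big(v,\,l\,D\cX(u)[\cH u]\big).
\]

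\textbf{Step 3: split $D\cX(u)[\cH u]$ into tangential and normal parts.} Since $\cH u\in T_uM$,
\[
D_{\cH u}\cX=\nabla_{\cH u}\cX+\II(\cH u,\cX).
\]
The terms $(\cH l)\cX(u)$ and $l\,\nabla_{\cH u}\cX(u)$ are bounded sections of $u^*TM$ lying in $\cF^{D_1}$, so by \pref{ELeq} their pairings with $v$ vanish. What remains is
\[
\cE(\bv_2,l\,\cX(u))=-\cE\big(v,\,l\,\II(\cX(u),\cH u)\big),
\]
which is exactly the required identity, using the symmetry of $\II$.

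\textbf{Main obstacle.} The hardest step is justifying the chain rule for $\cH(\cX\circ u)$ and placing $\cH\phi$ in $\cF^{D_0}$, since $u$ is only a Dirichlet-space function and $\cH u$ is a distributional derivative. I would handle this by extending $\cX$ smoothly to $\mathbb{R}^d$ with compact support on the bounded range of $u$ over $\overline{D_0}$. I would then mollify $u$ along the flow $\Phi_s$ as in \lref{E1dense}(2), using the $\cE_1$-continuity of $\Phi_s^*$ together with the normal contraction property to pass to the limit in the composition.
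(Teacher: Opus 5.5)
Your proposal is correct and follows essentially the paper's proof. It uses the same decomposition $\cH(l\,\cX\circ u)=(\cH l)\,\cX\circ u+l\,\nabla_{\cH u}\cX(u)+l\,\II(\cH u,\cX\circ u)$, applies \lref{HInvariance} together with \pref{ELeq} to remove the tangential terms, extends from smooth $l$ to $l\in\cF^{D_1}\cap L^\infty(E)$ by bounded approximation, and identifies the two zero-energy additive functionals through the characterization from Nakao and Lemma 5.4.4 of \cite{FOT}. The only difference is that you explicitly flag the chain rule for the distributional derivative $\cH(\cX\circ u)$ and sketch a justification of it, whereas the paper simply asserts that identity.
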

\begin{proof}
For any $\cX \in \mathfrak{X}(M)$ and $l \in C_0^2(D_1)$, it holds that
\[
\cH (l \cX \circ u)(z) = (\cH l)(z) \cX \circ u(z)+ l(z) \nabla_{\cH u} \cX (u(z))+ l(z) \II (\cH u, \cX\circ u(z))
\]
and the first two terms on the right-hand side are in $\cF^{D_1}(u^*TM)$. Therefore, by \pref{ELeq} and \lref{HInvariance}, we have
\begin{align*}
- \mathcal{E}(l \mathcal{X} \circ u, \bv_2)&= \mathcal{E}(\cH \left(l \mathcal{X}\circ u\right), v)\\
&= \mathcal{E}(l \II(\mathcal{X}\circ u, \cH u),v).
\end{align*}
Next, let $l \in \cF^{D_1}\cap L^{\infty}(E)$. Since $C_0^{\infty}(E)$ is a special standard core of $(\cE,\cF)$ by \thetag{C}, we can take a uniformly bounded sequence $l_k \in C_0^{\infty}(D_1)$ such that $\l_k \to l$ in $\cE_1$ and $\tilde{l}_k\to \tilde{l}$ q.e. by Theorem 4.4.3 of \cite{FOT}.
By the argument above, it holds that
\[
- \mathcal{E}(l_k \mathcal{X} \circ u, \bv_2)= \mathcal{E}(l_k \II(\mathcal{X}\circ u, \cH u),v)
\]
for each $k$. By letting $k \to \infty$, we have
\[
- \mathcal{E}(l \mathcal{X} \circ u, \bv_2)= \mathcal{E}(l \II(\mathcal{X}\circ u, \cH u),v)
\]
since $l_k w \to l w$ in $\cE_1$ for any $w \in \cF \cap L^{\infty}(E)$ due to the Beurling-Deny decomposition.
Therefore,
\begin{align*}
\lim_{t \to 0}\frac{1}{t}\Ex_{l\bdm}\left[\int \langle \mathcal{X} \circ u(Z), dN^{[\bv_2]} \rangle_{\delta_{\mathbb{R}^d}} \right] &=- \mathcal{E}(l \mathcal{X} \circ u, \bv_2)\\
&= \mathcal{E}(l \II(\mathcal{X}\circ u, \cH u),v).
\end{align*}
This yields
\[
\int_0^t \langle \mathcal{X} \circ u(Z), dN^{[\bv_2]} \rangle_{\delta_{\mathbb{R}^d}} = -\int_0^t \langle \II(\mathcal{X}\circ u(Z), \cH u(Z)), dN^{[v]} \rangle_{\delta_{\mathbb{R}^d}}\ \text{for}\ t\leq \tau_{D_1}
\]
by Lemma 5.4.4 of \cite{FOT} and the proof of Theorem 2.2 of \cite{Nakao}.
\end{proof}
Next, we prepare an extension of a function $\bdf \in C^{\infty}(TM)$ to the whole space $\mathbb{R}^d \times \mathbb{R}^d$.
Since $(TM,g^{\bcl})$ is a pseudo-Riemannian submanifold, we have the decomposition
\[
T(\mathbb{R}^d\times \mathbb{R}^d)=TTM \oplus TTM^{\perp}
\]
with respect to the metric $\delta_{\mathbb{R}^d}^{\bcl}$ and we have the normal exponential map $\exp^{TTM^{\perp}} \colon \mathcal{U}_{TTM^{\perp}} \to \mathbb{R}^d \times \mathbb{R}^d$, where $\mathcal{U}_{TTM^{\perp}}$ is a neighborhood of $o(TTM)=\{ 0 \in T_{\bV}TM \mid \bV \in TM\}$ such that $\exp^{TTM^{\perp}}$ is a diffeomorphism into its image. We define $\bar{\bdf}\colon \exp^{TTM^{\perp}}(\mathcal{U}_{TTM^{\perp}})\to \mathbb{R}$ by
\begin{align}\label{extTM}
\bar{\bdf}=\bdf \circ \pi_{TTM^{\perp}}\circ \left( \exp^{TTM^{\perp}} \right)^{-1}.
\end{align}
We need the following lemma to obtain a further extension of $\bar{\bdf}$ to the whole space $\mathbb{R}^d \times \mathbb{R}^d$.
\begin{lem}\label{properTM}
Assume that the embedding $\iota \colon M \to \mathbb{R}^d$ is proper. Then the embedding $\iota_* \colon TM \to \mathbb{R}^d \times \mathbb{R}^d$ is also proper.
\end{lem}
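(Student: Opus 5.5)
Since $\iota_*$ is the restriction of a continuous map, the task is to show that the preimage of every compact set in $\mathbb{R}^d\times\mathbb{R}^d$ is compact in $TM$. I plan to check this via sequences. Recall that $\iota_*(\bU)=(\iota(x),\iota_*\bU)$ for $\bU\in T_xM$, where the second component is the vector $\bU$ regarded in $\mathbb{R}^d$. Let $K\subset\mathbb{R}^d\times\mathbb{R}^d$ be compact and let $\{\bU_k\}_{k\in\mathbb{N}}\subset \iota_*^{-1}(K)$ with $\bU_k\in T_{x_k}M$. I will produce a subsequence that converges in $TM$.

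First, the base points satisfy $\iota(x_k)\in\pi_1(K)$, which is compact. Properness of $\iota$ then gives $x_k\in\iota^{-1}(\pi_1(K))$, a compact subset of $M$. Passing to a subsequence, $x_k\to x$ in $M$. Second, the vectors $\iota_*\bU_k\in\pi_2(K)$ are bounded in $\mathbb{R}^d$, so after a further subsequence $\iota_*\bU_k\to\ba$ for some $\ba\in\mathbb{R}^d$. Because $\Pi$ is continuous on $M$, we have $\Pi_{x_k}\iota_*\bU_k=\iota_*\bU_k$, and letting $k\to\infty$ yields $\Pi_x\ba=\ba$. Hence $\ba=\iota_*\bU$ for a unique $\bU\in T_xM$.

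The remaining step is to show that $\bU_k\to\bU$ in the manifold topology of $TM$, not merely after applying $\iota_*$. I expect this to be the only point needing care, and I intend to handle it with a local trivialization. Take a chart $(x^i)$ around $x$. On the chart domain, $\iota_*\partial_i$ are continuous and linearly independent, so the coordinate map $\bU\mapsto(x^i,u^i)$ can be recovered from $\iota_*\bU$ by a continuous left inverse. Concretely, $u^i=\big(G^{-1}\big)^{ij}\langle\iota_*\partial_j,\iota_*\bU\rangle$, where $G=(g_{ij})$ is the metric matrix. It follows that the coordinates $u^i_k$ of $\bU_k$ converge to those of $\bU$, and therefore $\bU_k\to\bU$ in $TM$.

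Since $\iota_*^{-1}(K)$ is closed by continuity of $\iota_*$, this sequential compactness in the metrizable space $TM$ shows that $\iota_*^{-1}(K)$ is compact, which proves that $\iota_*$ is proper.
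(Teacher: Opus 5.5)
Your argument is correct and rests on the same two ingredients as the paper's proof. Properness of $\iota$ confines the base points to the compact set $\iota^{-1}(\pi_1(K))$, and compactness of $K$ bounds the fibre components. The paper packages this as the inclusion of the closed set $\iota_*^{-1}(K)$ in the closed disc bundle $\{\bV\in TM \mid \pi_{TM}\bV\in\iota^{-1}(\pi_1(K)),\ |\bV|\le R\}$ with $R=\max_{(a,b)\in K}|b|$, taking the compactness of that bundle for granted. Your sequential argument, with the chart-wise left inverse $u^i=(G^{-1})^{ij}\langle\iota_*\partial_j,\iota_*\bU\rangle$, verifies that compactness by hand.
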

\begin{proof}
Let $K \subset \mathbb{R}^d \times \mathbb{R}^d$ be compact. Then $\pi_1(K)\subset \mathbb{R}^d$ is compact. Since $\iota$ is proper by assumption, $\iota^{-1}(\pi_1(K))$ is also compact. Let
\[
R:=\max_{(a,b)\in K}|b|.
\]
Then we have
\[
\iota_*^{-1}(K)\subset \{ \bV \in TM \mid \pi_{TM}\bV \in \iota^{-1}(\pi_1(K)),\ |\bV| \leq R \}
\]
and the right-hand side is compact. Thus $\iota_*^{-1}(K)$ is also compact.
\end{proof}
\begin{rem}\label{extTM2}
Under the assumption that the embedding $\iota \colon M \to \mathbb{R}^d$ is proper, $\iota_*(TM)$ is closed in $\mathbb{R}^d \times \mathbb{R}^d$ by \lref{properTM}. Thus by taking a cut-off function on $\mathbb{R}^d \times \mathbb{R}^d$, we can extend $\bar{\bdf}$ to a function in $C^{\infty}(\mathbb{R}^d\times \mathbb{R}^d)$, which we continue to denote by $\bar{\bdf}$.
\end{rem}
The following lemma states the relation between the gradient on $TM$ with respect to $g^{\bcl}$ and that on $\mathbb{R}^d\times \mathbb{R}^d$ with respect to $\delta_{\mathbb{R}^d}^{\bcl}$. 
\begin{lem}
Assume that the embedding $\iota \colon M \to \mathbb{R}^d$ is proper.
Let $\bdf \in C^{\infty}(TM)$ and let $\bar{\bdf} \in C^{\infty}(\mathbb{R}^d \times \mathbb{R}^d)$ be the extension of $\bdf$ given by \eqref{extTM}. Then it holds that
\begin{align}
\nabla^{\bcl}\bdf = D^{\bcl}\bar{\bdf}\ \text{on}\ TM,
\end{align}
where $D^{\bcl}$ is the gradient operator on $\mathbb{R}^d \times \mathbb{R}^d$ with respect to the metric $\delta_{\mathbb{R}^d}^{\bcl}$.
\end{lem}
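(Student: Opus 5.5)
The identity will follow by comparing the two gradients through the metrics that define them. By definition, $\nabla^{\bcl}\bdf(\bU)\in T_{\bU}TM$ is the unique vector with $\langle \nabla^{\bcl}\bdf(\bU), \wilde{\bV}\rangle_{g^{\bcl}} = d\bdf_{\bU}(\wilde{\bV})$ for all $\wilde{\bV}\in T_{\bU}TM$. Similarly, $D^{\bcl}\bar{\bdf}(\bU)\in T_{\iota_*\bU}(\mathbb{R}^d\times\mathbb{R}^d)$ is characterized by $\langle D^{\bcl}\bar{\bdf}, \wilde{\bW}\rangle_{\delta_{\mathbb{R}^d}^{\bcl}} = d\bar{\bdf}(\wilde{\bW})$ for all $\wilde{\bW}$. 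Both metrics are nondegenerate: $\delta^{\bcl}_{\mathbb{R}^d}$ has signature $(d,d)$, and $g^{\bcl}$ is the pullback under $\iota_*$, which is nondegenerate on $TTM$. Hence we have the orthogonal splitting $T_{\iota_*\bU}(\mathbb{R}^d\times\mathbb{R}^d)=T_{\bU}TM\oplus T_{\bU}TM^{\perp}$ used to define $\exp^{TTM^\perp}$. It therefore suffices to show two facts: first, that $D^{\bcl}\bar{\bdf}$ is tangent to $TM$, i.e. has no component in $TTM^{\perp}$; second, that its pairing with tangent vectors agrees with $d\bdf$.

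The second fact is immediate from $\bar{\bdf}|_{TM}=\bdf$, since $\pi_{TTM^\perp}\circ(\exp^{TTM^\perp})^{-1}$ is the identity on the zero section. For $\wilde{\bV}\in T_{\bU}TM$ this gives $\langle D^{\bcl}\bar{\bdf},\iota_{**}\wilde{\bV}\rangle_{\delta^{\bcl}_{\mathbb{R}^d}}=d\bar{\bdf}(\iota_{**}\wilde{\bV})=d\bdf(\wilde{\bV})$. Because $\iota_*$ is isometric, $(\iota_*)^*\delta^{\bcl}_{\mathbb{R}^d}=g^{\bcl}$, so the tangential component of $D^{\bcl}\bar{\bdf}$ satisfies the defining relation of $\nabla^{\bcl}\bdf$.

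The first fact is the main step. For $\xi\in T_{\bU}TM^{\perp}$, consider the curve $t\mapsto\exp^{TTM^\perp}(t\xi)$ for small $t$. By construction $\bar{\bdf}$ is constant along it, equal to $\bdf(\bU)$, and its initial velocity is $\xi$ because $d\exp^{TTM^\perp}$ at the zero section restricted to the fiber is the identity. Hence $d\bar{\bdf}(\xi)=0$, i.e. $\langle D^{\bcl}\bar{\bdf},\xi\rangle_{\delta^{\bcl}_{\mathbb{R}^d}}=0$ for every normal $\xi$. By nondegeneracy and the orthogonal splitting, the normal component of $D^{\bcl}\bar{\bdf}$ vanishes. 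Combined with the previous paragraph, this gives $D^{\bcl}\bar{\bdf}=\nabla^{\bcl}\bdf$ on $TM$.

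One caveat concerns the cut-off in \rref{extTM2}. It must be chosen equal to $1$ on a neighborhood of $\iota_*(TM)$ inside the tubular neighborhood, which is possible because $\iota_*(TM)$ is closed by \lref{properTM}. The extension then agrees with \eqref{extTM} near $TM$, so the gradients on $TM$ are unaffected. The only point needing genuine care is that the pseudo-Riemannian normal exponential map is defined and has the stated derivative. Since $\delta^{\bcl}_{\mathbb{R}^d}$ is flat, $\exp^{TTM^\perp}(\xi)=\iota_*\bU+\xi$, which makes both properties evident.
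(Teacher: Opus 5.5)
Your proof is correct and follows the paper's argument. The paper likewise differentiates $t\mapsto \bar{\bdf}(\exp^{TTM^{\perp}}t\wilde{\bU})=\bdf(\bV)$ to see that $D^{\bcl}\bar{\bdf}$ is $\delta_{\mathbb{R}^d}^{\bcl}$-orthogonal to the normal space, and hence tangent to $TM$, and then identifies it with $\nabla^{\bcl}\bdf$. You add only details the paper leaves implicit: nondegeneracy of $g^{\bcl}$ and of the splitting, the tangential identification via $(\iota_*)^*\delta^{\bcl}_{\mathbb{R}^d}=g^{\bcl}$, the choice of cut-off, and the affine form of the flat normal exponential.
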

\begin{proof}
For any $\bV \in TM$ and $\wilde{\bU} \in T_{\bV}TM^{\perp}$,
\[
\bar{\bdf}(\exp^{TTM^{\perp}}t\wilde{\bU})=\bdf(\bV)
\]
for sufficiently small $t\in \mathbb{R}$. Thus by differentiating both sides, we have
\[
\langle D^{\bcl}\bar{\bdf}(\bV), \wilde{\bU}\rangle_{\delta_{\mathbb{R}^d}^{\bcl}}=0,
\]
which means $D^{\bcl}\bar{\bdf}(\bV) \in T_{\bV}TM$ and equals $\nabla^{\bcl} \bdf(\bV)$. 
\end{proof}

\begin{proof}[Proof of \tref{DiffHarmonic}]
To begin with, we show that $\bu(Z)^{\tau_{D_1}}$ is a $\Prob_z$-semimartingale for q.e. $z\in E$.
Let $D_2$ be a relatively compact open subset with $\overline{D_1} \subset D_2$, $\overline{D_2}\subset D$. Let $\phi \in C^{\infty}_0(D)$ be a nonnegative smooth function satisfying
\begin{align*}
\begin{cases}
&\phi = 1\ \text{on}\ \overline{D_2},\\
&0\leq \phi \leq 1\ \text{on}\ E.
\end{cases}
\end{align*}
Then $\phi \bu \in \cF^{D}(\mathbb{R}^d\times \mathbb{R}^d)$. For $\bdf \in C^{\infty}(TM)$, let $\bar{\bdf} \in C^{\infty}(\mathbb{R}^d \times \mathbb{R}^d)$ be the extension of $\bdf$ given by \eqref{extTM}. Since it holds that
\begin{align*}
\bdf(\bu(Z_{t\land \tau_{D_1}})) &=\bar{\bdf}(\bu(Z_{t\land \tau_{D_1}}))\\
& = \bar{\bdf}(\phi \bu(Z_{t\land \tau_{D_1}})) + \mathbf{1}_{\{ t\geq \tau_{D_1} \}}\{\bar{\bdf}(\bu(Z_{t\land \tau_{D_1}})) - \bar{\bdf}(\phi \bu(Z_{t\land \tau_{D_1}}))\},
\end{align*}
it suffices to show that $\bar{\bdf}(\phi \bu(Z))$ is a $\Prob_z$-semimartingale for q.e. $z\in E$. Here we can take the embedding $d\iota$ as a function $\bdf$ to verify the semimartingale property and hence, we do not need to care for the dependence of q.e. set on $\bdf$. We note that the $\cE$-harmonicity of $u$ yields that $u(Z)^{\tau_{D_1}}$ is a $\Prob_z$-semimartingale for q.e. $z \in E$ by Theorem 3.7 of \cite{Oka24}. In particular, both $N^{[\phi u],\tau_{D_1}}$ and $N^{[v],\tau_{D_1}}$ are processes of locally bounded variation.
By It\^o's formula for Dirichlet processes $\phi \bu(Z)$ on $\mathbb{R}^d\times \mathbb{R}^d$ (\cites{Nakao, Kuwae}), it holds that
\begin{align*}
\bar{\bdf} \circ (\phi \bu)(Z)-\bar{\bdf} \circ (\phi \bu)(Z_0)&=\int \langle D\bar{\bdf} \circ (\phi \bu)(Z), dM^{[\phi \bu]}\rangle_{\delta_{\mathbb{R}^d} \oplus \delta_{\mathbb{R}^d}}\\
&\h + \int \langle D\bar{\bdf} \circ (\phi \bu)(Z), dN^{[\phi \bu]}\rangle_{\delta_{\mathbb{R}^d} \oplus \delta_{\mathbb{R}^d}}\\
&\h + (\text{a process of locally bounded variation}),
\end{align*}
where $D\bar{\bdf}$ stands for the gradient of $\bar{\bdf}$ for the Euclidean metric $\delta_{\mathbb{R}^d} \oplus \delta_{\mathbb{R}^d}$ on $\mathbb{R}^d \times \mathbb{R}^d$.
Note that it holds that
\[
\int_0^{t\land \tau_{D_1}}\langle D\bar{\bdf} \circ (\phi \bu) (Z), dN^{[\phi \bu]}\rangle_{\delta_{\mathbb{R}^d} \oplus \delta_{\mathbb{R}^d}} = \int_0^{t\land \tau_{D_1}} \langle (\phi D^{\bcl}\bar{\bdf}\circ \bu)(Z), dN^{[\phi \bu]}\rangle_{\delta_{\mathbb{R}^d}^{\bcl}}
\]
by
\[
D \bar{\bdf} \circ (\phi \bu) = \phi D \bar{\bdf} \circ \bu \ \text{on}\ D_1
\]
and
\[
\langle D\bar{\bdf}, (\ba,\bb) \rangle_{\delta_{\mathbb{R}^d} \oplus \delta_{\mathbb{R}^d}} = \langle D^{\bcl}\bar{\bdf}, (\ba,\bb) \rangle_{\delta_{\mathbb{R}^d}^{\bcl}},\ \text{for}\ \ba,\bb \in \mathbb{R}^d.
\]
For $\mathcal{X}\in \mathfrak{X}(M)$, it holds that
\begin{align*}
\int^{\cdot \land \tau_{D_1}} \langle v_{\bu(Z)}(\mathcal{X}), dN^{[\phi \bu]}\rangle_{\delta_{\mathbb{R}^d}^{\bcl}}&=\int^{\cdot \land \tau_{D_1}} \langle \mathcal{X}\circ u, dN^{[\phi u]} \rangle_{\delta_{\mathbb{R}^d}}.
\end{align*}
Therefore, $\dis \int_0^{\cdot \land \tau_{D_1}} \langle v_{\bu(Z)}(\mathcal{X}), dN^{[\phi \bu]}\rangle_{\delta_{\mathbb{R}^d}^{\bcl}}$ is a  process of bounded variation. We also have
\begin{align*}
\int_0^{\cdot \land \tau_{D_1}} \langle \mathcal{X}^{\bcl}\circ \bu(Z), dN^{[\phi \bu]}\rangle_{\delta_{\mathbb{R}^d}^{\bcl}}&=\int_0^{\cdot \land \tau_{D_1}} \langle \mathcal{X}\circ u(Z), dN^{[\phi \bu_2]}\rangle_{\delta_{\mathbb{R}^d}}\\
&\h + \int_0^{\cdot \land \tau_{D_1}} \langle \II(\cH u, \mathcal{X}\circ u)(Z), dN^{[\phi u]} \rangle_{\delta_{\mathbb{R}^d}} \\
&\h + \int_0^{\cdot \land \tau_{D_1}} \langle \nabla_{\cH u(Z)}\mathcal{X} (u(Z)), dN^{[\phi u]} \rangle_{\delta_{\mathbb{R}^d}},
\end{align*}
where $\bu_2=\pi_2(\bu)$.
The last two terms are processes of bounded variation. We set
\begin{align*}
\bA^{(2)}_t&:=\mathbf{1}_{\{t\geq \tau_{D_1}\}}\{(1-\phi (Z_{\tau_{D_1}}))\bu_2(Z_{\tau_{D_1}})-(1-\phi (Z_{\tau_{D_1}-}))\bu_2(Z_{\tau_{D_1}-})\},\\
\bB^{(2)}_t&:=\int_0^{t\land \tau_{D_1}}\int_{E\, \backslash \, D_2}(1-\phi)\bu_2(z)\, N(Z_s,dz)dH_s.
\end{align*}
Then by \lref{hAB}, the process $\bA^{(2)}$ is of $\Prob_z$-integrable variation and $\bB^{(2)}$ is the dual predictable projection for q.e. $z\in E$. In addition, we have
\begin{align}
N^{[\bv_2]}_t=N^{[\phi \bu_2]}_t+\bB^{(2)}_t, \ t\leq \tau_{D_1},\ \Prob_z\text{-a.s. for q.e.}\ z\in E,\label{Nphiu}
\end{align}
where $\bv_2=\pi_2(\bv)$.
Indeed, by the Fukushima decomposition for $\phi \bu$ and $\bv$, we have
\begin{align*}
\phi \bu(Z)-\phi \bu(Z_0)&= M^{[\phi \bu]} + N^{[\phi \bu]},\\
\bu(Z)-\bu(Z_0)&=\bh(Z)-\bh(Z_0) + M^{[\bv]}+N^{[\bv]}.
\end{align*}
Since $\bu(Z_t)=\phi \bu(Z_t)$ for $t<\tau_{D_1}$, we have
\[
M^{[\phi \bu_2]}_t+N^{[\phi \bu_2]}_t = \bH^{(2)}_t + M^{[\bv_2]}_t+N^{[\bv_2]}_t
\]
for $t<\tau_{D_1}$, $\Prob_z$-a.s. for q.e. $z\in E$, where
\begin{align*}
\bH^{(2)}_t&=\bh_2(Z_t)-\bh_2(Z_0),\\
\bh_2&=\pi_2(\bh).
\end{align*}
By incorporating the jump at $t=\tau_{D_1}$, we have
\[
M^{[\phi \bu_2]}_t+N^{[\phi \bu_2]}_t + \bA^{(2)}_t = \bH^{(2)}_t + M^{[\bv_2]}_t+N^{[\bv_2]}_t
\]
for $t\leq \tau_{D_1}$, $\Prob_z$-a.s. for q.e. $z\in E$. Therefore, we have
\begin{align}
M^{[\phi \bu_2]}_{t\land \tau_{D_1}} - M^{[\bv_2]}_{t\land \tau_{D_1}}+ \bA^{(2)}_{t\land \tau_{D_1}}-\bB^{(2)}_{t\land \tau_{D_1}} -\bH^{(2)}_{t\land \tau_{D_1}} = N^{[\bv_2]}_{t\land \tau_{D_1}}-N^{[\phi \bu_2]}_{t\land \tau_{D_1}}-\bB^{(2)}_{t\land \tau_{D_1}}.\label{martzeroen}
\end{align}
Here the left-hand side of \eqref{martzeroen} is a martingale. In addition, since the right-hand side of \eqref{martzeroen} is continuous, the left-hand side is a continuous martingale. Therefore, by taking the continuous local martingale part, we obtain
\[
M^{[\phi \bu_2],c}_{t\land \tau_{D_1}} - M^{[\bv_2],c}_{t\land \tau_{D_1}} - \bH^{(2),c}_{t\land \tau_{D_1}} = N^{[\bv_2]}_{t\land \tau_{D_1}}-N^{[\phi \bu_2]}_{t\land \tau_{D_1}}-\bB^{(2)}_{t\land \tau_{D_1}}.
\]
The right-hand side is a CAF of zero quadratic variation for the part process on $D_1$. Hence it is $\bdm$-equivalent to zero.
Moreover, in the same way as Remark 2.6 of \cite{Oka24}, it is indistinguishable from zero under $\Prob_z$-a.s. for q.e. $z\in E$ by the uniqueness of CAF's. Consequently, the left-hand side is identically zero under $\Prob_z$-a.s. for q.e. $z \in E$.
Therefore, we obtain \eqref{Nphiu}.
Thus by \lref{Nbv}, we have
\begin{align*}
\int_0^{\cdot \land \tau_{D_1}} \langle \mathcal{X}\circ u(Z), dN^{[\phi \bu_2]}\rangle_{\delta_{\mathbb{R}^d}} &= \int_0^{\cdot \land \tau_{D_1}} \langle \mathcal{X}\circ u(Z), dN^{[\bv_2]}\rangle_{\delta_{\mathbb{R}^d}} - \int_0^{\cdot \land \tau_{D_1}} \langle \mathcal{X}\circ u(Z), d\bB^{(2)} \rangle_{\delta_{\mathbb{R}^d}} \\
&=-\int_0^{\cdot \land \tau_{D_1}} \langle \II(\mathcal{X}\circ u, \cH u)(Z), dN^{[v]}\rangle_{\delta_{\mathbb{R}^d}}\\
&\h - \int_0^{\cdot \land \tau_{D_1}} \langle \mathcal{X}\circ u(Z), d\bB^{(2)} \rangle_{\delta_{\mathbb{R}^d}}.
\end{align*}
Therefore, $\dis \int_0^{\cdot \land \tau_{D_1}} \langle \mathcal{X}^{\bcl}\circ \bu(Z), dN^{[\phi \bu]}\rangle_{\delta_{\mathbb{R}^d}^{\bcl}}$ is a process of bounded variation for each $\cX \in \mathfrak{X}(M)$. Since complete lifts and vertical lifts of vector fields span each tangent space of $TM$, we deduce that $\bar{\bdf}(\phi \bu(Z))^{\tau_{D_1}}$ is a $\Prob_z$-semimartingale for q.e. $z\in E$. This yields that $\bJ^{\tau_{D_1}}$ is a $TM$-valued $\Prob_z$-semimartingale for q.e. $z\in E$. In particular, the process $N^{[\bv], \tau_{D_1}}$ is locally of bounded variation.

Next, we show that $\bJ^{\tau_{D_1}}$ is a $\Prob_z$-$\gamma^{\bcl}$-martingale for q.e. $z$. Since $u(Z)^{\tau_{D_1}}$ is a $\Prob_z$-$\gamma$-martingale for q.e. $z\in E$, \cref{martveccondition} shows that it suffices to verify the condition for complete lifts of vector fields. For each $\cX \in \mathfrak{X}(M)$, we have
\begin{align*}
\int_0^{\cdot \land \tau_{D_1}} \langle \mathcal{X}^{\bcl}\circ \bu(Z), dN^{[\bv]}\rangle_{\delta_{\mathbb{R}^d}^{\bcl}}&=\int_0^{\cdot \land \tau_{D_1}} \langle \mathcal{X}\circ u(Z), dN^{[\bv_2]}\rangle_{\delta_{\mathbb{R}^d}}\\
&\h + \int_0^{\cdot \land \tau_{D_1}} \langle \II(\mathcal{X}\circ u, \cH u)(Z), dN^{[v]} \rangle_{\delta_{\mathbb{R}^d}} \\
&\h + \int_0^{\cdot \land \tau_{D_1}} \langle \left(\nabla_{\cH u(Z)}\mathcal{X}\right) (u(Z)), dN^{[v]} \rangle_{\delta_{\mathbb{R}^d}} \\
&=0,
\end{align*}
where the first two terms vanish by \lref{Nbv} and the last term also vanishes since the integrand process is $TM$-valued and $u(Z)^{\tau_{D_1}}$ is a $\gamma$-martingale.
Therefore, $\bJ^{\tau_{D_1}}$ is a $\Prob_z$-$\gamma^{\bcl}$-martingale for q.e. $z\in E$.
\end{proof}

\section{Examples}\label{Examples}
In this section, we show some examples of Dirichlet forms and vector fields satisfying condition \thetag{C}.
\subsection{Symmetric L\'evy processes on Euclidean space}
Let $\{ \nu_t\}_{t\geq 0}$ be a convolution semigroup of probability measures on $\mathbb{R}^m$ and define the translation-invariant transition function $\{p_t\}_{t\geq 0}$ by 
\begin{align*}
p_tu(z):=\int_{\mathbb{R}^m}u(z+w)\, \nu_t(dw).
\end{align*}
Then $\{p_t\}_{t\geq 0}$ is the transition function of a L\'evy process. In addition, if $\{p_t\}_{t\geq 0}$ is symmetric, then the associated Dirichlet form $(\cE,\cF)$ is symmetric and regular. This Dirichlet form $(\cE,\cF)$ satisfies \thetag{C} for vector fields $\cH=\partial_i$ ($i=1,\dots,m$). In particular, if the corresponding L\'evy process is a symmetric $\alpha$-stable process for some $\alpha \in (0,2)$, the notion of $\cE$-harmonic maps defined in \dref{defharmonic} agrees with that of $\frac{\alpha}{2}$-fractional harmonic maps in \cites{MPS21}. In this case, provided $u$ is locally bounded on an open set $D$, conditions \thetag{A} and \thetag{B} are equivalent to
\[
\left( 1 \land |z|^{-(m+\alpha)} \right)|u| \in L^1(\mathbb{R}^m)
\]
by Example 2.12 in \cite{Chen09}.
Moreover, the partial regularity result for sphere-valued $\frac{\alpha}{2}$-harmonic maps obtained in \cite{MPS21} shows that under some conditions, there exists a closed polar set $\mathbf{N}$ such that $u \in C^{\infty}(D \backslash \mathbf{N})$. In such cases, if the differential $\partial_i u$ is realized as a locally integrable function on the whole $\mathbb{R}^m$ and satisfies
\[
\left( 1 \land |z|^{-(m+\alpha)} \right)|\partial_i u| \in L^1(\mathbb{R}^m),
\]
then the harmonic map satisfies the assumptions of \tref{DiffHarmonic} with $D$ replaced by $D \backslash \mathbf{N}$.

\subsection{Subordinated Brownian motions on Riemannian manifolds}
Let $(E,h)$ be a complete connected Riemannian manifold and denote the Laplace-Beltrami operator on $(E,h)$ by $\Delta_E$. Let $\bdm$ be the Riemannian volume measure on $(E,h)$. Let $(\cE,\cF)$ be a Dirichlet form on $L^2(E;\bdm)$ defined by
\begin{align*}
\begin{cases}
\cE(u,v)&=\frac{1}{2}\int_E h(\nabla^E u, \nabla^E v)\, d\bdm,\\
\cF&=\overline{C_0^{\infty}(E)}^{\cE_1},
\end{cases}
\end{align*}
where $\nabla^E$ is the gradient operator on $(E,h)$.
Then the Markov semigroup $\{T_t\}_{t\geq 0}$ on $L^2(E;\bdm)$ corresponding to $(\cE,\cF)$ is the heat semigroup and the associated Markov process is Brownian motion. We assume that Brownian motion on $(E,h)$ is conservative. In addition, we suppose that the generator $\frac{1}{2}\Delta$ of $\{T_t\}_{t\geq 0}$ on $L^2(E;\bdm)$ has $C_0^{\infty}(E)$ as its operator core.
For $\beta \in (0,1)$, the Bernstein function is given by
\begin{align*}
f^{(\beta)}(t)&=\int_0^{\infty}(1-e^{-ts})\, \mu^{(\beta)}(ds),\\
\mu^{(\beta)}(ds)&=\frac{\beta}{\Gamma(1-\beta)}\frac{ds}{s^{1+\beta}}.
\end{align*}
The subordinator associated with $f^{(\beta)}$ is defined by a convolution semigroup $\{\nu_t^{(\beta)}\}_{t\geq 0}$ of probability measures on $[0,\infty)$ characterized by the Laplace transform
\[
\int_0^{\infty}e^{-sx}\, \nu_t^{(\beta)}(ds)=e^{-tf^{(\beta)}(x)}.
\]
Let $\alpha:=2\beta$. The $\alpha$-subordinated semigroup $\{T_t^{(\alpha)}\}_{t\geq 0}$ on $L^2(E;\bdm)$ is defined by
\begin{align}\label{subsemi}
T^{(\alpha)}_tu=\int_0^{\infty} T_su \, \nu_t^{\left(\frac{\alpha}{2}\right)}(ds).
\end{align}
Then $\{T^{(\alpha)}_t\}_{t\geq 0}$ is a symmetric Markov semigroup on $L^2(E;\bdm)$ and its corresponding Dirichlet form $(\cE^{(\alpha)},\cF^{(\alpha)})$ is regular by \cite{AR05} and conservative by \eqref{subsemi}. Moreover, under the assumption that $C_0^{\infty}(E)$ is an operator core of $\frac{1}{2}\Delta$, $C_0^{\infty}(E)$ is also an operator core of the generator $\cL^{(\alpha)}$ of $\{T^{(\alpha)}_t\}_{t\geq 0}$ (e.g. \cite{AR05}).
The Dirichlet form $(\cE^{(\alpha)},\cF^{(\alpha)})$ satisfies \thetag{C} for each Killing vector field. Indeed, by the representation \eqref{subsemi}, we have
\[
[T_t^{(\alpha)},\cH]\phi=0
\]
for $\phi \in C_0^{\infty}(E)$. Consequently, we have
\[
[\cL^{(\alpha)},\cH]\phi=0.
\]

\subsection{Isotropic L\'evy processes on Riemannian manifolds}\label{Levymfd}
Let $(E,h)$ be an $m$-dimensional compact connected Riemannian manifold. Let $\nu$ be a rotationally invariant measure on $\mathbb{R}_0^m:=\mathbb{R}^m \backslash \{0\}$ satisfying
\[
\int_{\mathbb{R}^m_0}1 \land |a|^2 \, d\nu(a)<\infty.
\]
For each $z\in E$, we define the measure $\nu_z$ on $T_zE$ by
\[
\nu_z=\be_{z\sharp}\nu,
\]
where $\be_z \colon \mathbb{R}^m \to T_zE$ is a linear isometry and $\sharp$ stands for the push-forward measure. Then since $\nu$ is rotationally invariant, $\nu_z$ does not depend on the choice of $\be_z$. We consider the linear operator
\[
\cL u(z):=\frac{\sigma}{2} \Delta_E u(z)+\int_{T_zE} \left\{ u(\exp_z \bV)-u(z)-\mathbf{1}_{\{|\bV| <1 \}}h(\nabla u(z), \bV) \right\}\, d\nu_z(\bV)
\]
for $u \in C^{\infty}(E)$ and $\sigma \geq 0$. Then by Theorem 3.1 in \cite{AppleEst00}, there exists a Feller process $\{Z_t\}$ on $E$ whose generator is given by $\cL$. The process $Z$ is called an isotropic L\'evy process on $E$. We denote the transition function of $Z$ by $\{p_t\}_{t \geq 0}$.
Then by Theorem 4.2 of \cite{Apple21}, $\{p_t\}_{t\geq 0}$ extends to the strongly continuous self-adjoint Markov semigroup on $L^2(E;\bdm)$.
In particular, if we let $(\cE,\cF)$ be a Dirichlet form associated with $\{Z_t\}_{t\geq 0}$, then $(\cE,\cF)$ is regular. Moreover, the proposition below shows that this Dirichlet form $(\cE,\cF)$ satisfies condition \thetag{C-2} for every Killing vector field.
\begin{prop}
For any $u \in C^{\infty}(E)$ and any Killing vector field $\cH$ on $E$, it holds that
\[
[\cL,\cH]u=0.
\] 
\end{prop}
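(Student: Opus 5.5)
The plan is to use the flow $\Phi_s$ of the Killing field $\cH$, which is a one-parameter group of isometries of $(E,h)$ because $E$ is compact. I will show that $\cL$ commutes with $\Phi_s^*$ for every $s$, i.e.
\[
\cL(u\circ\Phi_s)=(\cL u)\circ \Phi_s,\quad u\in C^{\infty}(E),
\]
and then differentiate in $s$ at $s=0$. Since $u\circ\Phi_s$ depends smoothly on $s$ in the $C^2$ topology, and $\cL$ is continuous from $C^2(E)$ to $C(E)$ (the Lévy integral is controlled by the $C^2$-norm because $\int 1\land|a|^2\,d\nu<\infty$), the derivative of the left-hand side is $\cL\cH u$. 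The derivative of the right-hand side is $\cH\cL u$, which gives $[\cL,\cH]u=0$.

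I treat the diffusion part and the jump part separately. For the diffusion part, $\Delta_E$ commutes with isometries, so $\Delta_E(u\circ\Phi_s)=(\Delta_E u)\circ\Phi_s$. For the jump part, fix $z$ and write $\psi=\Phi_s$. The key geometric facts are:
\begin{itemize}
\item the isometry property $\psi(\exp_z \bV)=\exp_{\psi(z)}(d\psi_z\bV)$;
\item the gradient identity $h(\nabla(u\circ\psi)(z),\bV)=h(\nabla u(\psi(z)),d\psi_z\bV)$;
\item the norm identity $|d\psi_z\bV|=|\bV|$.
\end{itemize}
With these, the integrand for $u\circ\psi$ at $z$ evaluated at $\bV$ equals the integrand for $u$ at $\psi(z)$ evaluated at $d\psi_z\bV$. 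It then remains to check that $(d\psi_z)_\sharp\nu_z=\nu_{\psi(z)}$. This holds because $d\psi_z\circ\be_z$ is a linear isometry $\mathbb{R}^m\to T_{\psi(z)}E$, and $\nu_{\psi(z)}$ does not depend on the choice of isometry, by the rotational invariance of $\nu$. A change of variables then gives the commutation identity for the integral part.

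The main obstacle is justifying the differentiation of the integral term in $s$, since $\nu$ may be infinite near $0$. I will handle it directly. On $\{|\bV|\ge1\}$ the integrand is bounded, and so is its $s$-derivative, uniformly in $s$, so dominated convergence applies. On $\{|\bV|<1\}$ I use Taylor's formula along geodesics on the compact manifold. The $s$-derivative of
\[
u\circ\Phi_s(\exp_z\bV)-u\circ\Phi_s(z)-h(\nabla(u\circ\Phi_s)(z),\bV)
\]
is bounded by $C|\bV|^2$, with $C$ depending on $\sup_{|s|\le1}\|u\circ\Phi_s\|_{C^3}$. This supremum is finite by smoothness of the flow and compactness. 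Dominated convergence then applies, and the interchange of $d/ds$ with the integral is justified.

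Alternatively, one can bypass the differentiation issue by computing $\cL\cH u$ and $\cH\cL u$ directly via differentiation under the integral. However, the commutation-with-flow route is cleaner, and I will follow it.
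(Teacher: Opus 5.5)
Your proposal is correct and follows essentially the paper's argument: you prove $\cL(\Phi_s^*u)=\Phi_s^*(\cL u)$ from the isometry of the flow, the gradient identity and $(d\Phi_s)_\sharp\nu_z=\nu_{\Phi_s(z)}$, and then differentiate at $s=0$ using the continuity of $\cL\colon C^2(E)\to C(E)$. Your extra dominated-convergence justification is a valid but redundant alternative to the paper's use of $C^2$-convergence of the difference quotients $(\Phi_s^*u-u)/s\to\cH u$.
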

\begin{proof}
Since $\cH$ is Killing, $[\Delta, \cH]u=0$ for all $u \in C^{\infty}(E)$. Let $\Phi_{\ep} \colon E \to E$ be the flow generated by $\cH$. Then $\Phi_{\ep}$ is an isometry on $E$. Thus we have
\begin{align*}
\Phi_{\ep}(\exp_z \bV)&=\exp_{\Phi_{\ep}(z)}\Phi_{\ep *}(\bV),\\
\nabla (u\circ \Phi_{\ep})(z)&=\Phi_{-\ep *}(\nabla u (\Phi_{\ep}(z))).
\end{align*}
Therefore, the jump part of $\cL(\Phi_{\ep}^*u)$ is given by
\begin{align*}
&\int_{T_zE}\left\{ u\circ \Phi_{\ep}(\exp_z \bV)- u\circ \Phi_{\ep}(z)-\mathbf{1}_{\{ |\bV|<1 \}}h_z(\nabla (u\circ \Phi_{\ep})(z), \bV) \right\}\, d\nu_z(\bV)\\
&=\int_{T_zE}\left\{ u(\exp_{\Phi_{\ep}(z)} \Phi_{\ep *}\bV)- u\circ \Phi_{\ep}(z)-\mathbf{1}_{\{ |\bV|<1 \}}h_{\Phi_{\ep}(z)}((\nabla u)(\Phi_{\ep}(z)), \Phi_{\ep *}\bV) \right\}\, d\nu_z(\bV)\\
&=\int_{T_{\Phi_{\ep}(z)}E}\left\{ u(\exp_{\Phi_{\ep}(z)} \bV)- u\circ \Phi_{\ep}(z)-\mathbf{1}_{\{ |\bV|<1 \}}h_{\Phi_{\ep}(z)}((\nabla u)(\Phi_{\ep}(z)), \bV) \right\}\, d\nu_{\Phi_{\ep}(z)}(\bV).
\end{align*}
Here we used $\Phi_{\ep * \sharp}\nu_z=\nu_{\Phi_{\ep}(z)}$ in the third equality. Thus we get
\begin{align}
\cL (\Phi_{\ep}^*u)=\Phi_{\ep}^*(\cL u).\label{commuflow}
\end{align}
We can easily check that $\cL \colon C^2(E) \to C(E)$ is continuous. Since
\[
\frac{\Phi_{\ep}^*u-u}{\ep} \to \cH u\ \text{in}\ C^2(E)
\]
for $u\in C^{\infty}(E)$, the limit
\[
\lim_{\ep \to 0}\frac{\cL (\Phi_{\ep}^*u) - \cL u}{\ep}
\]
exists and equals $\cL \cH u$. Therefore, by \eqref{commuflow}, the directional derivative
\[
\cH \cL u=\lim_{\ep \to 0}\frac{\Phi_{\ep}^*(\cL u)-\cL u}{\ep}
\]
also exists and equals $\cL \cH u$.
\end{proof}
The next proposition shows that the generator of the form $(\cE,\cF)$ has $C^{\infty}(E)$ as its operator core if it has nonzero Brownian component. Consequently, $(\cE,\cF)$ satisfies condition \thetag{C} in this case.
\begin{prop}
Assume $\sigma>0$. Then $C^{\infty}(E)$ is an operator core of $\cL$ on $L^2(E;\bdm)$.
\end{prop}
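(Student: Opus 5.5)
We need to show that $C^{\infty}(E)$ is an operator core of $\cL$ on $L^2(E;\bdm)$ when $\sigma>0$. The plan is to use the standard core criterion. Let $\cL_2$ denote the $L^2$-generator of the self-adjoint semigroup $\{p_t\}$; it is non-positive self-adjoint. By the proposition above, $\cL$ commutes with Killing fields, but what we really need is that $\cL_2$ coincides with $\cL$ on $C^{\infty}(E)$. Granting this, it suffices to show that $\cL_2|_{C^\infty(E)}$ is essentially self-adjoint. Equivalently, for $\lambda>0$ the range $(\lambda-\cL)(C^{\infty}(E))$ is dense in $L^2(E;\bdm)$, i.e. any $f\in L^2$ with $\langle f,(\lambda-\cL)\phi\rangle=0$ for all $\phi\in C^\infty(E)$ must vanish.

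\textbf{Step 1: $\cL_2=\cL$ on $C^{\infty}(E)$.} Since $E$ is compact, Theorem 3.1 of \cite{AppleEst00} gives that $C^\infty(E)\subset C^2(E)$ lies in the domain of the Feller generator, with action given by the formula for $\cL$. Uniform convergence of $(p_t\phi-\phi)/t$ on a compact manifold with finite volume implies $L^2$ convergence. Hence $C^\infty(E)\subset\cD(\cL_2)$ and the two generators agree there.

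\textbf{Step 2: symmetry and elliptic regularity.} Suppose $f\in L^2$ satisfies $\langle f,(\lambda-\cL)\phi\rangle=0$ for all $\phi\in C^\infty(E)$. Write $\cL=\frac{\sigma}{2}\Delta_E+\cL^J$. Because $\cL$ is symmetric on $C^\infty(E)$ (it is the restriction of a self-adjoint operator), $f$ is a distributional solution of
\[
\left(\lambda-\tfrac{\sigma}{2}\Delta_E\right)f=\cL^{J*}f
\]
in a suitable weak sense. The key point is that $\cL^J$ is an operator of order at most $2$, but with a "small" principal part. Split $\nu=\nu\mathbf{1}_{\{|a|<\ep\}}+\nu\mathbf{1}_{\{|a|\ge\ep\}}$. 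The large-jump part is bounded on $L^2$, since isometric pushforwards make it a Markov-type averaging operator minus a multiple of the identity, with the drift term bounded by $\|\nabla\phi\|$. The small-jump part satisfies, by Taylor expansion along geodesics,
\[
\|\cL^J_{<\ep}\phi\|_{H^{s-2}}\le C\,\eta(\ep)\|\phi\|_{H^{s}},\qquad \eta(\ep)=\int_{|a|<\ep}|a|^2\,d\nu\to0,
\]
uniformly on the compact manifold. We then run a bootstrap in Sobolev scales. Choose $\ep$ so small that $\frac{\sigma}{2}\Delta_E+\cL^J_{<\ep}$ remains elliptic as a perturbation. Testing against $\phi=(\lambda'-\frac{\sigma}{2}\Delta_E)^{-1}\psi$ shows $f\in H^{s}$ implies $f\in H^{s+\min(1,\cdot)}$. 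The large-jump bounded part costs nothing, and the drift cutoff term costs one derivative, which $\Delta$ gains back as two. Iterating gives $f\in C^\infty(E)$.

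\textbf{Step 3: conclusion.} Once $f\in C^\infty(E)$, set $\phi=f$ to get
\[
0=\langle f,(\lambda-\cL)f\rangle=\lambda\|f\|^2+\cE(f,f)\ge\lambda\|f\|^2,
\]
so $f=0$ and the range is dense.

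The main obstacle is Step 2: a rigorous elliptic regularity for the integro-differential operator. The first thing to try is to avoid full pseudodifferential machinery by using commutator-free estimates. The heat semigroup $e^{t\Delta_E}$ commutes with nothing in general, but $\cL^J_{<\ep}$ is a small perturbation relative to $\Delta_E$ in the $H^{s}\to H^{s-2}$ norm. A Neumann-series argument for $(\lambda-\frac{\sigma}{2}\Delta_E-\cL^J_{<\ep})^{-1}$ on the scale $H^s$ should then yield the smoothing, with constants depending on the geometry only through compactness. This is exactly where $\sigma>0$ is used; without it, no smoothing is available.
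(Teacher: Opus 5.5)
Your reduction is sound: Step 1 holds, and $C^\infty(E)$ is a core iff $(\lambda-\cL)(C^\infty(E))$ is dense. Step 3 also works, but only \emph{if} $f$ is smooth.

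\textbf{The gap: the bootstrap in Step 2.} To go beyond the first step you need the large-jump part $\cL^J_{\ge\ep}\phi(z)=\int_{|\bV|\ge\ep}\{\phi(\exp_z\bV)-\phi(z)\}\,d\nu_z(\bV)$ (or its adjoint) to be bounded on $H^s$ for some $s>0$. Nothing in the hypotheses gives that. Its $L^2$-boundedness comes from measure invariance: either invariance of the Liouville measure under $\mathrm{Exp}(a^k\wilde{\cA}_k)$, or symmetry plus the $L^\infty$ bound. Differentiating $z\mapsto\phi(\exp_z\bV)$, however, produces Jacobi fields along geodesics of length $|\bV|$. These can grow exponentially in $|\bV|$, for instance in negative curvature, while $\nu$ only satisfies $\nu(\{|a|\ge\ep\})<\infty$ with no moment condition.

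So ``the large-jump part costs nothing'' holds only at the $L^2$ level. Your iteration stops at best at $f\in H^2$, and the claim $f\in C^\infty(E)$, on which Step 3 relies, is unsupported. A minor point: the truncated drift term $\int_{\ep\le|\bV|<1}h(\nabla\phi,\bV)\,d\nu_z(\bV)$ vanishes by rotational invariance, so no derivative is lost there.

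\textbf{The repair: no regularity is needed.} You already have two estimates:
\begin{itemize}
\item[(a)] the Taylor bound $\|\cL^J_{<\ep}\phi\|_{L^2}\le C\eta(\ep)\|\phi\|_{H^2}$, combined with $\|\phi\|_{H^2}\le C(\|\Delta_E\phi\|_{L^2}+\|\phi\|_{L^2})$;
\item[(b)] the $L^2$-boundedness of $\cL^J_{\ge\ep}$.
\end{itemize}
Together they give $\|\cL^J\phi\|\le\delta\|\tfrac{\sigma}{2}\Delta_E\phi\|+C_\delta\|\phi\|$ with $\delta$ as small as you like; this is where $\sigma>0$ enters. Put $T:=\cL^J(\lambda-\tfrac{\sigma}{2}\Delta_E)^{-1}$. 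Then $\|T\|\le\delta+C_\delta/\lambda<1$ for $\lambda$ large. Testing your orthogonality relation with $\phi=(\lambda-\tfrac{\sigma}{2}\Delta_E)^{-1}\psi$ gives $f=T^*f$, hence $f=0$.

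This is exactly the content of the Kato--Rellich theorem, which is how the paper argues. It proves the relative bound of $\cL_J$ with respect to $\cL_0=\tfrac{\sigma}{2}\Delta_E$:
\begin{itemize}
\item large jumps via invariance of the Liouville measure on the orthonormal frame bundle;
\item small jumps via Taylor's formula plus the Bochner formula to control $\|\mathrm{Hess}\,u\|$ by $\|\Delta_E u\|+\|u\|$.
\end{itemize}
It then applies Kato--Rellich, with $C^\infty(E)$ a core of $\cL_0$.

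Alternatively, you can keep your route but stop at $f\in H^2$. Then replace Step 3 by approximating $f$ in $H^2$ by smooth $\phi_n$, using that the same relative bound makes $\cL\colon H^2\to L^2$ continuous. This gives $\lambda\|f\|^2\le\lim_n\langle\phi_n,(\lambda-\cL)\phi_n\rangle=\lim_n\langle f,(\lambda-\cL)\phi_n\rangle=0$.
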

\begin{proof}
By Theorem 4.2 of \cite{Apple21}, $\cL$ is self-adjoint. We decompose $\cL$ as $\cL=\cL_0+\cL_J$,
\begin{align*}
\cL_0 u(z)&:=\frac{\sigma}{2} \Delta_E u(z),\\
\cL_J u(z)&:=\int_{T_zE} \left\{ u(\exp_z \bV)-u(z)-\mathbf{1}_{\{|\bV| <1 \}}h(\nabla u(z), \bV) \right\}\, d\nu_z(\bV).
\end{align*}
Since $E$ is compact, $C^{\infty}(E)$ is an operator core of $\cL_0$ on $L^2(E;\bdm)$ and $\cL_0$ is self-adjoint.
We apply the Kato-Rellich theorem (Chapter V, Theorem 4.3, 4.4 in \cite{Kato}) to $\cL_0+\cL_J$.
In order to do that, we need to get the $L^2$ estimate
\begin{align}
\| \cL_Ju \|_{L^2(E;\bdm)} \leq c_1 \| \cL_0 u \|_{L^2(E;\bdm)} + c_2 \| u \|_{L^2(E;\bdm)}\label{L0bound}
\end{align}
for any $u \in C^{\infty}(E)$ and some $c_1 \in [0,1)$ and $c_2 \geq 0$ independent of $u$. Let $r>0$ and set
\begin{align*}
\cL^{(1)}_J u(z)&:=\int_{|\bV|\geq r} \left\{ u(\exp_z \bV)-u(z) \right\}\, d\nu_z(\bV),\\
\cL^{(2)}_J u(z)&:=\int_{0< |\bV| <r} \left\{ u(\exp_z \bV)-u(z)- h(\nabla u(z), \bV) \right\}\, d\nu_z(\bV).
\end{align*}
Then since $\nu$ is rotationally invariant and $\nu(\{a\mid r\leq |a| <1\})$ is finite, $\cL_J=\cL_J^{(1)}+\cL_J^{(2)}$.
Let $\pi_{\cO(E)}\colon \cO(E) \to E$ be an orthonormal frame bundle and denote the Liouville measure on $\cO(E)$ by $\wilde{\bdm}$. Suppose that $\wilde{\bdm}$ is normalized in such a way that $\pi_{\cO(E)\sharp}\wilde{\bdm}=\bdm$. Let $\wilde{\cA}_k$ ($k=1,\dots,m$) be canonical horizontal vector fields on $\cO(E)$.
Then for any $a\in \mathbb{R}^m$, the horizontal vector field $a^k\wilde{\cA}_k$ is complete and $\wilde{\bdm}$ is invariant with respect to the flow $\mathrm{Exp}(a^k \wilde{\cA}_k)$ (c.f. \cite{Apple21}).
For $\xi \in \cO_z(E)$, it holds that
\begin{align*}
\pi_{\cO(E)}^*(\cL_J^{(1)}u)(\xi)&= \int_{|\bV|\geq r} \left\{ u(\exp_{\pi_{\cO(E)}\xi} \bV) - u(\pi_{\cO(E)}\xi) \right\}\, d\nu_{\pi_{\cO(E)}\xi}(\bV)\\
&=\int_{|\bV|\geq r} \left\{\pi_{\cO(E)}^*u(\mathrm{Exp}(\xi^{-1}\bV)^{k}\wilde{\cA}_k(\xi)) -\pi_{\cO(E)}^*u(\xi) \right\}\, d\xi_{\sharp}\nu (\bV)\\
&=\int_{|a|\geq r} \left\{\pi_{\cO(E)}^*u(\mathrm{Exp}(a^{k}\wilde{\cA}_k)(\xi)) -\pi_{\cO(E)}^*u(\xi) \right\}\, d\nu (a).
\end{align*}
Therefore,
\begin{align*}
\int_E &\left| \cL_J^{(1)}u(z) \right|^2 \, \bdm(dz)=\int_{\cO(E)} \left| \pi_{\cO(E)}^*(\cL_J^{(1)}u)(\xi) \right|^2 \, \wilde{\bdm}(d\xi)\\
&\leq \nu(\{ |a|\geq r \}) \int_{\cO(E)} \int_{|a|\geq r}\left| \pi_{\cO(E)}^*u(\mathrm{Exp}(a^{k}\wilde{\cA}_k)(\xi)) -\pi_{\cO(E)}^*u(\xi) \right|^2\, d\nu(a) \, \wilde{\bdm}(d\xi)\\
&\leq 4\nu(\{ |a|\geq r \})^2\int_{\cO(E)} |\pi_{\cO(E)}^*u(\xi)|^2 \, \wilde{\bdm}(d\xi)\\
&=4\nu(\{ |a|\geq r \})^2\int_{E} |u(z)|^2 \, \bdm(dz),
\end{align*}
where we used the invariance of $\wilde{\bdm}$.
In the same way, it holds that
\begin{align*}
&\pi_{\cO(E)}^*(\cL_J^{(2)}u)(\xi)\\
&=\int_{\{0<|a|< r\}} \left\{\pi_{\cO(E)}^*u(\mathrm{Exp}(a^{k}\wilde{\cA}_k)(\xi)) -\pi_{\cO(E)}^*u(\xi) - a^k\wilde{\cA}_k \pi_{\cO(E)}^*u(\xi) \right\}\, d\nu (a)\\
&=\int_{\{0<|a|< r\}} \int_0^1 (1-s)a^k a^l\wilde{\cA}_k \wilde{\cA}_l \pi_{\cO(E)}^*u(\mathrm{Exp}(sa^i\wilde{\cA}_i)(\xi)) \, ds\, d\nu (a)
\end{align*}
by Taylor's theorem.
Therefore, we have
\begin{align*}
&\int_E \left| \cL_J^{(2)}u(z) \right|^2 \, \bdm(dz)\\
&\leq \frac{1}{2} \left(\int_{0<|a|<r}|a|^2\, \nu(da) \right) \int_0^1(1-s) ds \int_{\cO(E)} |a^ka^l\wilde{\cA}_k \wilde{\cA}_l \pi_{\cO(E)}^*u(\xi)|^2 \, \wilde{\bdm}(\xi)\, \frac{1}{|a|^2}\nu(da)\\
&\leq \frac{1}{4}\left(\int_{0<|a|<r}|a|^2\, \nu(da) \right)^2 \|\mathrm{Hess}\, u\|_{L^2(E;\bdm)}^2,
\end{align*}
where we used $a^ka^l\wilde{\cA}_k \wilde{\cA}_l \pi_{\cO(E)}^*u(\xi)=\mathrm{Hess}\, u(z)(\xi a, \xi a)$ and the invariance of $\wilde{\bdm}$.
On the other hand, by the Bochner formula, it holds that
\[
|\mathrm{Hess}\, u|^2=\frac{1}{2}\Delta |\nabla u|^2-h(\nabla \Delta u, \nabla u)-\mathrm{Ric}(\nabla u, \nabla u).
\]
By integrating both sides, we get
\[
\| \mathrm{Hess}\, u \|_{L^2(E;\bdm)}^2 \leq \| \Delta u \|_{L^2(E;\bdm)}^2 + C\| \nabla u \|_{L^2(E;\bdm)}^2.
\]
for some constant $C>0$. Since
\begin{align*}
\|\nabla u \|_{L^2(E;\bdm)}^2&=-\int_E u \Delta u\, d\bdm\\
&\leq \|u \|_{L^2(E;\bdm)}\|\Delta u \|_{L^2(E;\bdm)}\\
&\leq \frac{\|u \|_{L^2(E;\bdm)}^2 + \|\Delta u \|_{L^2(E;\bdm)}^2}{2},
\end{align*}
we have
\[
\|\mathrm{Hess}\, u \|_{L^2(E;\bdm)}^2 \leq C \left( \|\Delta u \|_{L^2(E;\bdm)}^2 + \| u \|_{L^2(E;\bdm)}^2\right)
\]
for some $C>0$. Therefore, by choosing $r>0$ sufficiently small, we conclude that for any $\ep \in (0,1)$, there exists $C_{\ep}>0$ such that for any $u\in C^{\infty}(E)$,
\[
\|\cL_Ju \|_{L^2(E;\bdm)} \leq \ep \| \cL_0 u \|_{L^2(E;\bdm)} + C_{\ep}\| u \|_{L^2(E;\bdm)}.
\]
Thus we obtain \eqref{L0bound}. The estimate \eqref{L0bound} and the Kato-Rellich theorem show that $C^{\infty}(E)$ is an operator core of $\cL$.
\end{proof}

\section{Stochastic parallel transport with projected jumps along \cl semimartingales on manifolds}\label{SectionParallel}
In order to obtain a differential formula for harmonic maps, we introduce the parallel transport of tangent vectors with projected jumps along \cl semimartingales on Riemannian submanifolds. Let $X$ be a \cl semimartingale on $M$. Let $\bV_0$ be a $TM$-valued $\cF_0$-measurable random variable with $\pi_{TM}\bV_0=X_0$. We consider the following SDE on $\mathbb{R}^d$:
\begin{align}
\cV_t - \bV_0&= \int_0^t(\partial_i \Pi_{X_{s-}})\cV_{s-} \, dX^i_s\nonumber \\
&\h +\frac{1}{2} \int_0^t \left( \partial_i \partial_j \Pi_{X_{s-}}+\partial_i \Pi_{X_{s-}}\partial_j\Pi_{X_{s-}}\right) \cV_{s-} \, d[X^i,X^j]^c_s\nonumber \\
&\h + \sum_{0<s \leq t}\left( \Pi_{X_s}\cV_{s-} - \cV_{s-} - (\partial_i\Pi_{X_{s-}}\cV_{s-}) \Delta X^i_s \right).\label{parallel}
\end{align}
Here we extend the map $x\mapsto \Pi_x$ smoothly to a tubular neighborhood of $M$ by composing it with the normal projection.
\begin{prop}\label{SDEtransport}
SDE \eqref{parallel} admits a unique global $TM$-valued solution $\cV$ satisfying
\[
\cV_t\in T_{X_t}M,\ |\cV_t| \leq |\bV_0|,\ t\geq 0.
\]
\end{prop}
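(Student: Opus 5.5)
The SDE \eqref{parallel} is linear in $\cV$ with coefficients driven by the given semimartingale $X$. I would first obtain a unique global $\mathbb{R}^d$-valued solution and then prove separately that it stays in $T_{X}M$ and does not increase in norm.

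\emph{Step 1: existence and uniqueness.} Rewrite \eqref{parallel} as $d\cV_t = F(X_{t-})\cV_{t-}\,dY_t$, where $Y$ is the $\mathbb{R}^{d}\times\mathbb{R}^{d\times d}\times\mathbb{R}$-valued semimartingale built from $X$, $[X^i,X^j]^c$ and the pure-jump process. The jump term can be written as $\sum_{s\le t}G(X_{s-},X_s)\cV_{s-}$ with
\[
G(x,y)=\Pi_y-\Pi_x-\partial_i\Pi_x\,(y-x)^i.
\]
Here $G$ is smooth, bounded after the extension of $x\mapsto\Pi_x$, and $O(|y-x|^2)$. Hence $\sum_s|G(X_{s-},X_s)|$ is locally finite, because $\sum_s|\Delta X_s|^2<\infty$. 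All coefficients are linear in $\cV$ with locally bounded predictable multipliers. Protter's theorem on SDEs with functional Lipschitz coefficients (or an equivalent stochastic exponential argument) then gives a unique c\`adl\`ag solution that does not explode.

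\emph{Step 2: invariance of the tangent bundle.} Let $W_t:=\Pi^{\perp}_{X_t}\cV_t=\cV_t-\Pi_{X_t}\cV_t$. I will compute $d(\Pi_X\cV)$ using It\^o's product formula and It\^o's formula for $\Pi_X$, which is a smooth matrix function of $X$.
\begin{itemize}
\item The $dX$ term is $(\partial_i\Pi)\cV+\Pi\,\partial_i\Pi\,\cV$.
\item The jump contribution is $\Pi_{X_s}\cV_s-\Pi_{X_{s-}}\cV_{s-}$. Since $\cV_s=\Pi_{X_s}\cV_{s-}$ and $\Pi^2=\Pi$, it equals $\Pi_{X_s}\cV_{s-}-\Pi_{X_{s-}}\cV_{s-}$.
\item The continuous second-order terms combine using $\partial_i\Pi=\partial_i\Pi\,\Pi+\Pi\,\partial_i\Pi$, obtained by differentiating $\Pi^2=\Pi$, together with its second derivative.
\end{itemize}
From these I expect that $W$ satisfies a linear SDE of the form $dW=(\text{predictable coefficients})\,W_-\,dY$, whose jump part at time $s$ is $\Pi^\perp_{X_s}\cV_s=0$. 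Since $W_0=0$, uniqueness in Step 1 applied to this linear equation gives $W\equiv0$. Hence $\cV_t\in T_{X_t}M$.

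\emph{Step 3: the norm bound.} Now apply It\^o's formula to $|\cV|^2$. Using $\cV=\Pi_X\cV$, the continuous part should cancel:
\begin{itemize}
\item the $dX$ term gives $\langle\cV,\partial_i\Pi\,\cV\rangle=0$, because $\Pi\,\partial_i\Pi\,\Pi=0$;
\item in the $d[X,X]^c$ terms, the contributions $\langle\cV,\partial_i\partial_j\Pi\,\cV\rangle$ and $\langle\cV,\partial_i\Pi\,\partial_j\Pi\,\cV\rangle$ and the quadratic variation term $|\partial_i\Pi\,\cV|^2$ cancel. This follows by differentiating $\Pi\,\partial_i\Pi\,\Pi=0$ once more and using the symmetry of $\partial_i\Pi$.
\end{itemize}
Thus $|\cV|^2$ changes only at jumps, where $|\cV_s|=|\Pi_{X_s}\cV_{s-}|\le|\cV_{s-}|$ because $\Pi_{X_s}$ is an orthogonal projection. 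Therefore $|\cV_t|$ is nonincreasing and $|\cV_t|\le|\bV_0|$.

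The main obstacle is the bookkeeping in Steps 2 and 3: checking that the Stratonovich-type correction $\tfrac12(\partial_i\partial_j\Pi+\partial_i\Pi\,\partial_j\Pi)$ makes the continuous part exactly norm-preserving and tangent-preserving. I would do this by first treating the continuous case, where \eqref{parallel} is the It\^o form of the Stratonovich equation $\circ d\cV=(\partial_i\Pi)\cV\circ dX^i$. That equation is known to give Levi-Civita parallel transport, which preserves both tangency and norm. The jumps are then handled separately, as above.
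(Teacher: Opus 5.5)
Your three steps are the paper's three steps: an auxiliary linear SDE driven by $\cK$, whose jump part is $\sum_{s\le t}R(X_{s-},X_s)$ (your $G$); a homogeneous linear SDE for $\cW=\Pi^\perp_X\cV$ with $\cW_0=0$; and It\^o's formula for $|\cV|^2$, giving $|\cV_t|^2=|\bV_0|^2-\sum_{s\le t}|\Pi^\perp_{X_s}\cV_{s-}|^2$. Step 3 is fine. However, Steps 1 and 2 concern two different equations, and Step 2 is circular as written.

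Writing the jump term of \eqref{parallel} as $G(X_{s-},X_s)\cV_{s-}$ uses $\cV_{s-}=\Pi_{X_{s-}}\cV_{s-}$, which is exactly the tangency not yet proved. Without it, the coefficient $\Pi_{X_s}-I-(\partial_i\Pi_{X_{s-}})\Delta X^i_s$ tends to $-\Pi^\perp_{X_{s-}}$, not to $0$, as $\Delta X_s\to 0$. So \eqref{parallel}, read literally on $\mathbb{R}^d$, is not a linear SDE with a semimartingale driver. What Step 1 actually solves is the modified equation $\cV=\bV_0+\int(d\cK)\,\cV_-$, the paper's \eqref{parallel2}.

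In Step 2 you then use $\cV_s=\Pi_{X_s}\cV_{s-}$. That is the jump rule of \eqref{parallel}, and it again presupposes $\cV_{s-}\in T_{X_{s-}}M$. For the equation you solved, $\cV_s=\Pi_{X_s}\cV_{s-}+\Pi^\perp_{X_{s-}}\cV_{s-}$, so $\cW_s=\Pi^\perp_{X_s}\cW_{s-}$, not $0$. Your claimed structure, with $\cW$ reset to $0$ at every jump of $X$ (that is, $\Delta\cW_s=-\cW_{s-}$), would also fall outside the uniqueness result of Step 1 when $X$ has infinitely many jumps. The driver would need a jump of size one at each jump time of $X$, which is not c\`adl\`ag.

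The repair is what the paper does. The correct jump is $\Delta\cW_s=-\Pi_{X_s}\cW_{s-}=-(\Pi_{X_s}-\Pi_{X_{s-}})\cW_{s-}$, using $\Pi_{X_{s-}}\cW_{s-}=0$. This is linear in $\cW_{s-}$ and carried by the jumps of the semimartingale $\Pi_X$. Combining it with the continuous terms, simplified via \eqref{partial1Pi} and \eqref{partial2Pi}, gives \eqref{SDEW}, whose only solution is $0$.

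After that you still need the identification step your proposal omits. Since $\Pi_{X_{s-}}\cV_{s-}=\cV_{s-}$, the solution of \eqref{parallel2} solves \eqref{parallel}, which gives existence. Conversely, any $TM$-valued solution of \eqref{parallel} solves \eqref{parallel2}, which gives uniqueness among $TM$-valued solutions. Only then is the jump rule $\cV_s=\Pi_{X_s}\cV_{s-}$ that you use in Step 3 justified.
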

\begin{proof}
We define $R \colon M \times M \to \mathbb{R}^d \otimes \mathbb{R}^d$ by
\[
R(x,y):=\Pi_y-\Pi_x-\partial_i \Pi_x(y^i-x^i).
\]
Then the purely discontinuous adapted process $\cR$ defined by
\[
\cR_t:=\sum_{0<s\leq t}R(X_{s-},X_s)
\]
is well-defined and locally of finite variation. We further define the $\mathbb{R}^d \otimes \mathbb{R}^d$-valued semimartingale $\cK$ by
\begin{align*}
\cK_t&:=\int_0^t \partial_i \Pi_{X_{s-}}\, dX^i_s\\
&\h + \frac{1}{2}\int_0^t (\partial_i \partial_j \Pi_{X_{s-}}+ \partial_i \Pi_{X_{s-}}\partial_j \Pi_{X_{s-}})\, d[X^i,X^j]^c_s + \cR_t.
\end{align*}
We consider the SDE
\begin{align}\label{parallel2}
\cV_t=\bV_0+\int_0^t (d\cK_s)\, \cV_{s-}.
\end{align}
Since \eqref{parallel2} is a linear SDE, it admits a unique global solution $\cV$. Next, we show that $\cV_t \in T_{X_t}M$. Let $\cW_t:=\Pi^{\perp}_{X_t}\cV_t$. Then $\cW_0=\Pi^{\perp}_{X_0}\bV_0=0$. 
We derive the SDE that $\cW$ satisfies. It holds that
\begin{align*}
d\cW_t&=-(d\Pi_{X_t})\, \cV_{t-}+\Pi^{\perp}_{X_{t-}}\, d\cV_t-d[\Pi_X,\cV]_t,\\
(d\Pi_{X_t})\, \cV_{t-}&=(\partial_i\Pi_{X_{t-}})\cV_{t-}\, dX^i_t+\frac{1}{2}(\partial_i\partial_j \Pi_{X_{t-}})\cV_{t-}\, d[X^i,X^j]^c_t\\
&\h +\{ \Pi_{X_t}\cV_{t-}- \Pi_{X_{t-}}\cV_{t-} - (\partial_i \Pi_{X_{t-}})\cV_{t-} \Delta X^i_t \},\\
\Pi^{\perp}_{X_{t-}}\, d\cV_t&=\Pi^{\perp}_{X_{t-}} (\partial_i \Pi_{X_{t-}})\cV_{t-}\, dX^i_t+ \frac{1}{2}\Pi^{\perp}_{X_{t-}}(\partial_i \partial_j \Pi_{X_{t-}}+ \partial_i \Pi_{X_{t-}}\partial_j \Pi_{X_{t-}})\cV_{t-}\, d[X^i,X^j]^c_t\\
&\h + \Pi^{\perp}_{X_{t-}}\{ \Pi_{X_t}\cV_{t-}-\Pi_{X_{t-}}\cV_{t-}
-(\partial_i \Pi_{X_{t-}})\cV_{t-} \Delta X^i_t \},\\
d[\Pi_X,\cV]_t&=d[\Pi_X,\cV]^c_t+\Delta \Pi_{X_t} \Delta \cV_t\\
&= (\partial_i \Pi_{X_{t-}})(\partial_j \Pi_{X_{t-}})\cV_{t-}\, d[X^i,X^j]^c_t + (\Pi_{X_t} \cV_t- \Pi_{X_t}\cV_{t-} - \Pi_{X_{t-}}\cV_t + \Pi_{X_{t-}} \cV_t).
\end{align*}
Since $x\mapsto \Pi_x$ is extended to a tubular neighborhood of $M$ by the composition with the normal projection, it satisfies $\Pi_x^2=\Pi_x$. Therefore, it also satisfies
\begin{align}
\partial_i \Pi_x&=(\partial_i \Pi_x)\Pi_x+\Pi_x(\partial_i \Pi_x),\label{partial1Pi}\\
\partial_i \partial_j \Pi_x&=\Pi_x (\partial_i \partial_j \Pi_x) + (\partial_i \Pi_x) (\partial_j \Pi_x) + (\partial_j \Pi_x) (\partial_i \Pi_x) + (\partial_i \partial_j \Pi_x) \Pi_x.\label{partial2Pi}
\end{align}
By \eqref{partial1Pi} and \eqref{partial2Pi}, we have
\begin{align}
\cW_t&=-\int_0^t(\partial_i \Pi_{X_{s-}})\cW_{s-}\, dX^i_s+\frac{1}{2}\int_0^t \left(-\partial_i \partial_j \Pi_{X_{s-}} + (\partial_i \Pi_{X_{s-}}) (\partial_j \Pi_{X_{s-}}) \right)\cW_{s-}\, d[X^i,X^j]^c_s\nonumber \\
&\h -\sum_{0<s\leq t}\{ \Pi_{X_s} - \Pi_{X_{s-}} - (\partial_i \Pi_{X_{s-}})\Delta X_s^i \}\cW_{s-}.\label{SDEW}
\end{align}
Since $0$ is the unique solution to \eqref{SDEW}, we have $\cW_t=0$ for $t\geq 0$ a.s., which yields $\cV_t \in T_{X_t}M$. Moreover, due to $\Pi_{X_{s-}}\cV_{s-}=\cV_{s-}$, SDE \eqref{parallel} is equivalent to \eqref{parallel2}. Finally, we show that $|\cV_t| \leq |\bV_0|$ for $t\geq 0$.
Let $\{\ep_i\}_{i=1}^d$ be the canonical basis on $\mathbb{R}^d$. Then
\begin{align}\label{partialPi}
(\partial_i \Pi_{X_-})\cV_-=\II(\Pi_{X_-}\ep_i, \cV_-) \perp T_{X_-}M.
\end{align}
It holds that
\begin{align*}
d |\cV_t|^2&=2 \langle \cV_{t-}, d\cV_t \rangle_{\delta_{\mathbb{R}^d}}+d[\cV, \cV]_t,\\
2 \langle \cV_{t-}, d\cV_t \rangle_{\delta_{\mathbb{R}^d}}&=2\langle \cV_{t-}, (\partial_i \Pi_{X_t-})\cV_{t-} \rangle_{\delta_{\mathbb{R}^d}} \, dX^i_t\\
&\h +\langle \cV_{t-}, \partial_i \partial_j\Pi_{X_{t-}}+(\partial_i \Pi_{X_{t-}}) (\partial_j \Pi_{X_{t-}})\cV_{t-} \rangle_{\delta_{\mathbb{R}^d}}\, d[X^i,X^j]^c_t\\
&\h - 2 \langle \cV_{t-}, \Pi^{\perp}_{X_{t}}\cV_{t-} \rangle_{\delta_{\mathbb{R}^d}},\\
d[\cV,\cV]_t&=d[\cV,\cV]^c_t+\langle | \Delta \cV_t |^2\\
&=\langle (\partial_i \Pi_{X_{t-}})\cV_{t-}, (\partial_j \Pi_{X_{t-}})\cV_{t-} \rangle_{\delta_{\mathbb{R}^d}}\, d[X^i,X^j]^c_t + |\Pi^{\perp}_{X_t}\cV_{t-}|^2,
\end{align*}
Using \eqref{partialPi}, \eqref{partial2Pi} and
\begin{align*}
\langle \cV_{t-}, \Pi^{\perp}_{X_{t}}\cV_{t-} \rangle_{\delta_{\mathbb{R}^d}}=| \Pi^{\perp}_{X_t}\cV_{t-}|^2,
\end{align*}
we have
\[
|\cV_t|^2=|\bV_0|^2-\sum_{0<s\leq t}| \Pi^{\perp}_{X_s}\cV_{s-}|^2.
\]
In particular, this means $|\cV_t| \leq |\bV_0|$.
\end{proof}
\begin{rem}
If $X$ is continuous, then the solution of \eqref{parallel} is the stochastic parallel transport along $X$ with respect to the Levi-Civita connection $\nabla$.
\end{rem}

\begin{lem}\label{parallelFormula}
Let $\cV$ be the solution to \eqref{parallel}. Let $\bJ=(X,J)$ be a $TM$-valued semimartingale with $\pi_{TM}\bJ=X$. Then
\begin{align*}
\langle \cV_t, J_t \rangle_{\delta_{\mathbb{R}^d}} - \langle \bV_0, J_0 \rangle_{\delta_{\mathbb{R}^d}}&=\int_0^t\langle \cV_{s-},dJ_s \rangle_{\delta_{\mathbb{R}^d}}+\frac{1}{2}\int_0^t \langle \II(\cV_{s-},dX^c_s), \II(J_{s-},dX^c_s) \rangle_{\delta_{\mathbb{R}^d}}.
\end{align*}
\end{lem}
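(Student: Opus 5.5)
The plan is to apply the integration-by-parts formula for $\mathbb{R}^d$-valued semimartingales to $\langle \cV, J\rangle_{\delta_{\mathbb{R}^d}}$:
\[
\langle \cV_t,J_t\rangle-\langle \bV_0,J_0\rangle=\int_0^t\langle \cV_{s-},dJ_s\rangle+\int_0^t\langle J_{s-},d\cV_s\rangle+[\cV,J]^c_t+\sum_{0<s\le t}\langle \Delta\cV_s,\Delta J_s\rangle .
\]
Then I will show that the second term, the continuous bracket and the jump sum together reduce to the $\II$-term. I will use the SDE \eqref{parallel} for $\cV$, written in the form \eqref{parallel2} (equivalent by \pref{SDEtransport}), together with the facts that $\cV_{s-}\in T_{X_{s-}}M$ and $J_{s-}\in T_{X_{s-}}M$.

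\textbf{The term $\int\langle J_-,d\cV\rangle$.}
\begin{itemize}
\item The $dX^i$ part vanishes, since $(\partial_i\Pi_{X_-})\cV_-=\II(\Pi_{X_-}\ep_i,\cV_-)$ is normal by \eqref{partialPi} and $J_-$ is tangent.
\item In the $d[X^i,X^j]^c$ part I use \eqref{partial2Pi}, which gives $\langle J_-,\partial_i\partial_j\Pi\,\cV_-\rangle=\langle J_-,(\partial_i\Pi\partial_j\Pi+\partial_j\Pi\partial_i\Pi)\cV_-\rangle$, because $\Pi J_-=J_-$ and $\Pi\cV_-=\cV_-$.
\item Since $\partial_j\Pi\,\cV_-$ is normal, \eqref{partial1Pi} gives $\partial_i\Pi\,\partial_j\Pi\,\cV_-=\Pi\,\partial_i\Pi\,\partial_j\Pi\,\cV_-$. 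With the symmetry of $\partial_i\Pi$, this yields $\langle J_-,\partial_i\Pi\partial_j\Pi\cV_-\rangle=\langle \partial_i\Pi J_-,\partial_j\Pi\cV_-\rangle=\langle \II(\ep_i^\top,J_-),\II(\ep_j^\top,\cV_-)\rangle$.
\item The drift therefore contributes $\tfrac12\cdot 3\langle \II(J_-,dX^c),\II(\cV_-,dX^c)\rangle$, counting both symmetric terms plus the extra product.
\end{itemize}

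\textbf{The continuous bracket.} Here I need $d[\cV,J]^c$, which requires the continuous martingale part of $J$. Since $J=\Pi_X J$, Itô's formula gives $dJ^c$ containing $(\partial_i\Pi_{X_-})J_-\,dX^{i,c}$ plus the tangential part $\Pi_{X_-}dJ^c$. Only the normal component pairs with $d\cV^c=(\partial_i\Pi)\cV_-\,dX^{i,c}$, which is normal. The bracket is then $\langle \II(\cV_-,dX^c),\II(J_-,dX^c)\rangle$, up to a cross term $\langle\partial_i\Pi\cV_-,\Pi\,dJ^c\rangle=0$. The coefficient bookkeeping has to be checked carefully: the target is exactly $\tfrac12$, so I expect the drift to come out effectively as $-\tfrac12$ (the sign from the second derivative via the Weingarten identity $\langle J,\partial_i\Pi\,\xi\rangle=-\langle A_\xi J,\cdot\rangle$) and the bracket as $+1$. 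I will redo this using $\langle J_-,\partial_i\partial_j\Pi\cV_-\rangle=-\langle\partial_i\Pi J_-,\partial_j\Pi\cV_-\rangle-\langle\partial_j\Pi J_-,\partial_i\Pi\cV_-\rangle$, obtained by differentiating $\Pi^\perp\partial\Pi\Pi$ identities. This consistency check is the main computational point.

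\textbf{The jump terms.} The jump of $\cV$ is $\Delta\cV_s=\Pi_{X_s}\cV_{s-}-\cV_{s-}=-\Pi^\perp_{X_s}\cV_{s-}$. The combined jump contribution is $\langle J_{s-},\Delta\cV_s\rangle-\langle J_{s-},(\partial_i\Pi)\cV_{s-}\Delta X^i_s\rangle+\langle\Delta\cV_s,\Delta J_s\rangle$, where the compensating piece already appeared as part of $\int\langle J_-,d\cV\rangle$ and vanishes. This equals $\langle J_{s-}+\Delta J_s,\Delta\cV_s\rangle=\langle J_s,-\Pi^\perp_{X_s}\cV_{s-}\rangle=0$, since $J_s\in T_{X_s}M$. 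So the jumps cancel exactly, which is why the projected-jump transport was chosen. Assembling the three pieces gives the formula.
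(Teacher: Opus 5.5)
Your plan is the paper's proof. You apply integration by parts to $\langle \cV,J\rangle_{\delta_{\mathbb{R}^d}}$ and use \eqref{partialPi} and \eqref{partial2Pi} to reduce the drift of $\int\langle J_-,d\cV\rangle$. You take the continuous martingale part of $\Pi^{\perp}_X J=0$ to get $[\cV,J]^c=\int\langle \II(\cV_-,dX^c),\II(J_-,dX^c)\rangle$, and you use $J_s\in T_{X_s}M$ to cancel the jumps.

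The one thing to fix is the sign in your second bullet. Because $\Pi J_-=J_-$ and $\Pi\cV_-=\cV_-$, the two outer terms of \eqref{partial2Pi} each reproduce $\langle J_-,\partial_i\partial_j\Pi\,\cV_-\rangle$, so
\[
\langle J_-,\partial_i\partial_j\Pi\,\cV_-\rangle=-\langle J_-,(\partial_i\Pi\,\partial_j\Pi+\partial_j\Pi\,\partial_i\Pi)\cV_-\rangle,
\]
with a minus sign, not a plus. A quick check: on the unit circle at $(1,0)$ with $J=\cV=e_2$, the left side is $-2$.

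Consequently the drift coefficient is $-\tfrac12$, not the $\tfrac32$ of your fourth bullet. This is exactly the identity you write down afterwards, and it matches the paper's computation. With it, $-\tfrac12+1=\tfrac12$ as claimed. Once you delete the incorrect second and fourth bullets, the argument is complete.
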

\begin{proof}
By integration by parts,
\[
d\langle \cV,J \rangle_{\delta_{\mathbb{R}^d}}=\langle \cV_-,dJ \rangle_{\delta_{\mathbb{R}^d}}+ \langle J_-, d\cV \rangle_{\delta_{\mathbb{R}^d}} + d[\cV,J].
\]
Let $\{\ep_i\}_{i=1}^d$ be the canonical basis on $\mathbb{R}^d$. By \eqref{partialPi} and \eqref{partial2Pi}, we have
\begin{align*}
\langle J_-,(\partial_i \Pi_{X_-})\cV_- \rangle_{\delta_{\mathbb{R}^d}}&=0,\\
\langle J_-, (\partial_i \partial_j \Pi_{X_-} + (\partial_i \Pi_{X_-}) (\partial_j \Pi_{X_-})) \cV_- \rangle_{\delta_{\mathbb{R}^d}} &= -\langle J_-, (\partial_j \Pi_{X_-}) (\partial_i \Pi_{X_-}) \cV_- \rangle_{\delta_{\mathbb{R}^d}}\\
&= -\langle (\partial_j \Pi_{X_-}) J_-, (\partial_i \Pi_{X_-}) \cV_- \rangle_{\delta_{\mathbb{R}^d}}\\
&= -\langle \II(\Pi_{X_-}\ep_j,J_-), \II(\Pi_{X_-}\ep_i, \cV_-) \rangle_{\delta_{\mathbb{R}^d}}.
\end{align*}
We also have
\begin{align*}
&\langle J_-, \Pi_{X}\cV_- - \cV_- - (\partial_i\Pi_{X_-}\cV_-) \Delta X^i \rangle_{\delta_{\mathbb{R}^d}}\\
&=\langle J_-, \Pi_{X}\cV_- - \cV_- \rangle_{\delta_{\mathbb{R}^d}}
\end{align*}
Therefore, by \eqref{parallel}, we have
\begin{align*}
\langle J_-, d\cV \rangle_{\delta_{\mathbb{R}^d}}=-\frac{1}{2}\langle \II(J_-, d^{\gamma}X^c), \II(\cV_-, d^{\gamma}X^c) \rangle_{\delta_{\mathbb{R}^d}} + \langle J_-, \Pi_{X}\cV_- - \cV_- \rangle_{\delta_{\mathbb{R}^d}}.
\end{align*}
On the other hand, taking the continuous local martingale parts of $\Pi^{\perp}_{X_t}J_t=0$, we obtain
\[
\Pi^{\perp}_{X_{t-}}dJ^c_t-(\partial_j \Pi_{X_{t-}})J_{t-}dX^{j,c}_t=0.
\]
Thus, it holds that
\begin{align*}
[J,\cV]^c&=\int \langle (\partial_i \Pi_{X_{t-}})\cV_{t-}, \ep_j \rangle_{\delta_{\mathbb{R}^d}}\, d[X^i,J^j]^c\\
&=\int \langle (\partial_i \Pi_{X_{t-}})\cV_{t-}, \Pi_{X_{t-}}^{\perp}\ep_j \rangle_{\delta_{\mathbb{R}^d}}\, d[X^i,J^j]^c\\
&=\left[\int (\partial_i \Pi_{X_-})\cV_- \, dX, \int \Pi^{\perp}_{X_-}dJ \right]^c\\
&=\left[ \int (\partial_i \Pi_{X_-})\cV_- \, dX^i, \int (\partial_j \Pi_{X_-})J_- \, dX^j \right]^c\\
&=\int \langle \II(\cV_-, d^{\gamma}X^c), \II(\bJ_-, d^{\gamma}X^c) \rangle_{\delta_{\mathbb{R}^d}},\\
[J,\cV]^d&=\sum_{0<s\leq \cdot} \langle \Delta J_s, \Pi_{X_s}\cV_{s-}-\cV_{s-}\rangle_{\delta_{\mathbb{R}^d}}.
\end{align*}
Therefore,
\begin{align*}
\langle J_-, d\cV \rangle_{\delta_{\mathbb{R}^d}} + d[\cV,J]= \frac{1}{2}\langle \II(\bJ_-, d^{\gamma}X^c), \II(\cV_-, d^{\gamma}X^c) \rangle_{\delta_{\mathbb{R}^d}},
\end{align*}
where we used
\[
\langle J, \Pi_{X}\cV_- - \cV_- \rangle_{\delta_{\mathbb{R}^d}}=0.
\]
This completes the proof.
\end{proof}

\begin{thm}\label{MeanValue}
Under the same assumptions as \tref{DiffHarmonic}, fix a relatively compact open set $D_1 \subset D$ with $\overline{D_1}\subset D$ and set $X:=u(Z)$, $J=\cH u(Z)$, $\bJ:=(X,J)$. Let $\cV$ be the solution of \eqref{parallel} with initial value $\bV_0 \in T_{X_0}M$.
Then, the process
\begin{align}
&g(\cV_{t\land \tau_{D_1}}, J_{t\land \tau_{D_1}})-g(\bV_0,J_0)+\frac{1}{2}\int_0^{t\land \tau_{D_1}} g(\cV_s, R_M^{\nabla}(J_{s},dX^{\tau_{D_1},c}_s)dX^{\tau_{D_1},c}_s ) \nonumber \\
&+ \sum_{0<s\leq t \land \tau_{D_1}} \langle \II( \cV_{s\land \tau_{D_1}-} ,J_{s \land \tau_{D_1}-}), \gamma^{\perp}(X_{s \land \tau_{D_1}-},X_{s\land \tau_{D_1}}) \rangle_{\delta_{\mathbb{R}^d}} \label{Difflocalmartingale}
\end{align}
is a local martingale starting from $0$, where $R_M^{\nabla} \in \Omega^2(\mathrm{End}(TM))$ is the curvature tensor on $M$ associated with the Levi-Civita connection $\nabla$ defined by
\[
R_M^{\nabla}(\cX, \cY) \cZ = \nabla_{\cX}\nabla_{\cY} \cZ - \nabla_{\cY}\nabla_{\cX} \cZ - \nabla_{[\cX,\cY]} \cZ,\ \cX, \cY, \cZ \in \mathfrak{X}(M).
\]
\end{thm}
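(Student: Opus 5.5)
Work with the stopped processes throughout, so write $X$, $J$, $\bJ$ for $X^{\tau_{D_1}}$, $J^{\tau_{D_1}}$, $\bJ^{\tau_{D_1}}$. By \tref{DiffHarmonic}, $\bJ$ is a $\gamma^{\bcl}$-martingale. By \cref{martveccondition}(ii), $X$ is a $\gamma$-martingale and, for every $\cX\in\mathfrak{X}(M)$, the process
\[
\int \langle \cX(X_-), dJ \rangle_{\delta_{\mathbb{R}^d}} + \frac{1}{2}\int \langle \II(\cX(X_-),J_-),\II(dX^c,dX^c)\rangle_{\delta_{\mathbb{R}^d}}+\sum_{0<s\leq \cdot} \langle \II( \cX(X_{s-}) ,J_{s-}), \gamma^{\perp}(X_{s-},X_s) \rangle_{\delta_{\mathbb{R}^d}}
\]
is a local martingale. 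As in the proof of \cref{martveccondition}, write the predictable $TM$-valued process $\cV_-$ (which lies above $X_-$) as $C^i_-\cX_i(X_-)$ for finitely many vector fields $\cX_i$ and locally bounded predictable coefficients $C^i$; for instance, take $\cX_i=\Pi\ep_i$ and $C^i=\langle \cV,\ep_i\rangle$, which is bounded by \pref{SDEtransport}. Stochastic integration of $C^i_-$ against these local martingales shows that
\[
\int \langle \cV_-, dJ\rangle + \frac12\int\langle \II(\cV_-,J_-),\II(dX^c,dX^c)\rangle + \sum \langle \II(\cV_{s-},J_{s-}),\gamma^\perp(X_{s-},X_s)\rangle
\]
is a local martingale.

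Next apply \lref{parallelFormula} to $\cV$ and $\bJ$. It gives
\[
g(\cV_t,J_t)-g(\bV_0,J_0)=\int_0^t\langle\cV_-,dJ\rangle+\frac12\int_0^t\langle\II(\cV_-,dX^c),\II(J_-,dX^c)\rangle .
\]
Substituting the local martingale from the first step, $g(\cV,J)-g(\bV_0,J_0)$ equals a local martingale minus the jump sum in \eqref{Difflocalmartingale}, plus the continuous finite-variation term
\[
\frac12\int\Bigl(\langle\II(\cV_-,dX^c),\II(J_-,dX^c)\rangle-\langle \II(\cV_-,J_-),\II(dX^c,dX^c)\rangle\Bigr).
\]
Since $X$ is $M$-valued, $d[X^i,X^j]^c$ is supported on $T_{X_-}M\otimes T_{X_-}M$. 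This can be justified by applying the projection $\Pi^\perp_{X_-}$ to the continuous martingale part of $X$, using $D\bi=\Pi$. In an orthonormal frame $e_k$ of $T_{X_-}M$ we therefore have $dX^c\otimes dX^c=e_k\otimes e_l\, d[X^k,X^l]^c$. The Gauss equation
\[
g(R(\bU,\bW)\bZ,\bY)=\langle\II(\bU,\bY),\II(\bW,\bZ)\rangle-\langle\II(\bU,\bZ),\II(\bW,\bY)\rangle
\]
then turns the bracket term into $-\frac12\int g(\cV_-,R^\nabla_M(J_-,dX^c)dX^c)$, up to the sign convention for $R^\nabla_M$. Moving it to the left-hand side produces the $+\frac12\int g(\cV,R(J,dX^c)dX^c)$ term. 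Finally, replacing $\cV_s,J_s$ by $\cV_{s-},J_{s-}$ in that continuous integral is harmless.

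The main obstacle is bookkeeping the signs and index placement in the Gauss equation so that they match the stated orientation $R^\nabla_M(J,dX^c)dX^c$ paired with $\cV$. I will check this first by fixing a frame and comparing both quadratic forms term by term.

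A smaller technical point is justifying the integration of the predictable coefficients $C^i_-$ against the local martingales from \cref{martveccondition}(ii). For this I use local boundedness, which follows from $|\cV|\le|\bV_0|$, together with the fact that $\cX\mapsto$ the displayed expression is $C^\infty(M)$-linear along $X_-$, as in property 1 of the Itô integral.
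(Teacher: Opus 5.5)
Your proposal is correct and follows essentially the same route as the paper: you combine \lref{parallelFormula} with the local martingale coming from the $\gamma^{\bcl}$-martingale property tested against the horizontal lift of $\cV_-$. The paper reads this off \pref{inthorvert}(2) directly, while you expand $\cV_-=C^i_-\cX_i(X_-)$ and integrate against \cref{martveccondition}(ii), which is the same thing. The difference of the two $\II\otimes\II$ terms is then converted into the curvature term by the Gauss equation, and your sign bookkeeping is right: the bracket equals $-\frac12\int g(\cV,R_M^{\nabla}(J,dX^c)dX^c)$, which yields the $+\frac12$ term in \eqref{Difflocalmartingale}.
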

\begin{proof}
By \lref{parallelFormula}, we have
\begin{align*}
\langle \cV_{t\land \tau_{D_1}}, J_{t\land \tau_{D_1}} \rangle_{\delta_{\mathbb{R}^d}} - \langle \bV_0, J_0 \rangle_{\delta_{\mathbb{R}^d}}&=\int_0^{t\land \tau_{D_1}}\langle \cV_{s-},dJ_s \rangle_{\delta_{\mathbb{R}^d}}\\
&\h +\frac{1}{2}\int_0^{t\land \tau_{D_1}} \langle \II(\cV_{s-},dX^c_s), \II(J_{s-},dX^c_s) \rangle_{\delta_{\mathbb{R}^d}}.
\end{align*}
On the other hand, since $\bJ^{\tau_{D_1}}$ is a $\gamma^{\bcl}$-martingale by \tref{DiffHarmonic}, the process
\begin{align*}
\int^{\cdot \land \tau_{D_1}}\lambda_{g^{\bcl}}(h^{\nabla}_{\bJ}(\cV_-))\, d^{\gamma^{\bcl}}\bJ=&\int^{\cdot \land \tau_{D_1}} \langle \cV_-, dJ \rangle_{\delta_{\mathbb{R}^d}} + \frac{1}{2}\int^{\cdot \land \tau_{D_1}} \langle \II(\cV_-,J_-),\II(dX^c,dX^c)\rangle_{\delta_{\mathbb{R}^d}}\\
&\h +\sum_{0<s\leq \cdot \land \tau_{D_1}} \langle \II( \cV_- , J_{s-}), \gamma^{\perp}(X_-,X) \rangle_{\delta_{\mathbb{R}^d}},
\end{align*}
is a local martingale by \cref{martveccondition}. Therefore, the process
\begin{align*}
\int^{\cdot \land \tau_{D_1}}\lambda_{g^{\bcl}}(h^{\nabla}_{\bJ}(\cV_-))\, d^{\gamma^{\bcl}}\bJ&= \langle \cV_{t\land \tau_{D_1}}, J_{t\land \tau_{D_1}} \rangle_{\delta_{\mathbb{R}^d}}- \langle \bV_0, J_0 \rangle_{\delta_{\mathbb{R}^d}}\\
&\h + \frac{1}{2}\int_0^{t\land \tau_{D_1}} \langle \II(\cV_{s-},J_{s-}),\II(dX^c_s,dX^c_s)\rangle_{\delta_{\mathbb{R}^d}}\\
&\h - \frac{1}{2}\int_0^{t\land \tau_{D_1}} \langle \II(\cV_{s-},dX^c_s), \II(J_{s-},dX^c_s) \rangle_{\delta_{\mathbb{R}^d}} \\
&\h + \sum_{0<s\leq t \land \tau_{D_1}} \langle \II( \cV_{s\land \tau_{D_1}-} ,J_{s \land \tau_{D_1}-}), \gamma^{\perp}(X_{s \land \tau_{D_1}-},X_{s\land \tau_{D_1}}) \rangle_{\delta_{\mathbb{R}^d}}
\end{align*}
is a local martingale. Rewriting the continuous bounded variation part of the right-hand side using
\begin{align*}
&\int_0^{t\land \tau_{D_1}} \langle \II(\cV_{s-},J_{s-}),\II(dX^c_s,dX^c_s)\rangle_{\delta_{\mathbb{R}^d}} - \int_0^{t\land \tau_{D_1}} \langle \II(\cV_{s-},dX^c_s), \II(J_{s-},dX^c_s) \rangle_{\delta_{\mathbb{R}^d}} \\
&=\int_0^{t\land \tau_{D_1}} g(\cV_s, R_M^{\nabla}(J_{s},dX^{c}_s)dX^{c}_s ),
\end{align*}
we deduce that the process \eqref{Difflocalmartingale} is a local martingale.
\end{proof}
\begin{rem}\label{truemart}
In addition to the assumptions of \tref{DiffHarmonic}, if both $E$ and $M$ are compact, $u$ is an $\cE$-harmonic map on the whole $E$, and $\bV_0$ is bounded, then the process \eqref{Difflocalmartingale} is a $\Prob_z$-martingale for q.e. $z\in E$ and we obtain
\begin{align}\label{ExpMart}
\Ex_z\left[g(\bV_0, \cH u(z))\right]=\Ex_z &\left[ g(\cV_{t\land \tau_{D_1}}, J_{t\land \tau_{D_1}})+\frac{1}{2}\int_0^{t\land \tau_{D_1}} g\left(\cV_s, R_M^{\nabla}\left( J_{s},dX^{\tau_{D_1},c}_s\right)dX^{\tau_{D_1},c}_s \right)\right.\nonumber \\
&\left.+ \sum_{0<s\leq t \land \tau_{D_1}} \langle \II( \cV_{s-} ,J_{s-}), \gamma^{\perp}(X_{s-},X_{s}) \rangle  \right].
\end{align}
In order to verify \eqref{ExpMart}, we show that $\dis Q_t:= \int^{\cdot \land \tau_{D_1}}\lambda_{g^{\bcl}}(h^{\nabla}_{\bJ}(\cV_-))\, d^{\gamma^{\bcl}}\bJ$ is a martingale. Since $E$ and $M$ are compact, both $u$ and $\cH u$ are componentwise in $\cF$. Thus the local martingale parts of $X$ and $J$ are square-integrable. In addition, since $u$ and $\cH u$ are in $L^{\infty}$, $J$ is a bounded process and $\II$ is bounded on the image of $u$. Moreover, since $\bV_0$ is bounded, $\cV$ is bounded by \pref{SDEtransport}. Since
\begin{align*}
h^{\nabla}_{\bJ}(\cV)=(\cV, \II(\cV,J)),
\end{align*}
it holds that
\[
Q_t=\int_0^{t\land \tau_{D_1}} \langle \cV_{s-}, dJ_s \rangle_{\delta_{\mathbb{R}^d}}+\int_0^{t\land \tau_{D_1}} \langle \II(\cV_{s-},J_{s-}), dX_s \rangle_{\delta_{\mathbb{R}^d}}.
\]
Therefore, there exists a constant $C>0$ such that
\[
[Q,Q]_t \leq C \left( [X,X]_{t\land \tau_{D_1}} + [J,J]_{t\land \tau_{D_1}} \right).
\]
Hence $Q$ is a square-integrable martingale.
\end{rem}
\begin{ex}
We assume $(\cE,\cF)$ has only the jump term and $M=\mathbb{S}^d$, the $d$-dimensional sphere. Then only the jump term of \eqref{Difflocalmartingale} remains and
\[
\langle \II( \cV_{s-} ,J_{s-}), \gamma^{\perp}(X_{s-},X_s) \rangle_{\delta_{\mathbb{R}^{d+1}}}=\frac{1}{2}|X_s-X_{s-}|^2g(\cV_{s-}, J_{s-})
\]
since $\II_x(\bU,\bV)=-g(\bU,\bV ) x$ and
\[
\gamma^{\perp}(x,y)=\langle x, y-x \rangle_{\mathbb{R}^{d+1}} x=-\frac{1}{2}|y-x|^2x.
\]
Thus for the jump part, using the L\'evy system formula \eqref{LevySys},
\begin{align*}
&\sum_{0<s\leq t \land \tau_{D_1}} \langle \II( \cV_{s-} ,J_{s-}), \gamma^{\perp}(X_{s-},X_{s}) \rangle_{\delta_{\mathbb{R}^{d+1}}}\\
&- \frac{1}{2}\int_0^{t \land \tau_{D_1}} \int_E |u(z)-u(Z_{s-})|^2N(Z_s,dz)g(\cV_{s-},J_{s-})\, dH_s 
\end{align*}
is a local martingale. We set
\begin{align*}
\rho_u(z)&:=\frac{1}{2}\int_E|u(w)-u(z)|^2\, N(z,dw),\\
Y_t&:=g(\cV_t,J_t).
\end{align*}
Then
\[
Y_{t\land \tau_{D_1}}-Y_0+\int_0^{t\land \tau_{D_1}} \rho_u(Z_s)Y_s\, dH_s
\]
is a local martingale. Thus if we set
\[
A_t=\exp \left( \int_0^t \rho_u(Z_s)\, dH_s \right),
\]
then the process $(AY)^{\tau_{D_1}}$ is a local martingale.
\end{ex}

\begin{thm}\label{diffLevy}
Let $(E,h)$ be a compact connected Riemannian manifold.
Let $(\cE,\cF)$ be the Dirichlet form on $E$ associated with an isotropic L\'evy process given in Subsection \ref{Levymfd} and assume that $(\cE,\cF)$ has a Brownian part with coefficient $\sigma>0$.
Let $\cH_i$ ($i=1,\dots l$) be Killing vector fields on $E$.
Assume that $u$ is $\cE$-harmonic on the whole space $E$ and satisfies the assumptions of \tref{DiffHarmonic} with $\cH=\cH_i$ for each $i=1,\dots,l$.
We set $X:=u(Z)$, $J_i:=\cH_iu(Z)$, $\bJ_i:=(X,J_i)$. Fix a bounded $\bV_0$ and let $\cV$ be the solution of \eqref{parallel}.
Fix $T>0$. For each $\mathbb{R}^l$-valued \cl $\{\cF_t^Z\}_{t\geq 0}$-adapted process $K$ with a deterministic initial value such that $K$ has almost surely absolutely continuous paths and $\int_0^T|\dot{K_t}|^2dt<\infty$, we set
\begin{align*}
I_t&:=\int_0^t \lambda_g(\cV_{s-})\, d^{\gamma}X_s,\\
L^K_t&:=\frac{1}{\sigma}\int_0^t \dot{K^i}_s\lambda_h(\cH_i)(Z_{s-})\, dZ^c_s,
\end{align*}
where $\lambda_h \colon TE \to T^*E$ is the bundle isomorphism associated with the metric $h$ and $dZ^c$ stands for the continuous local martingale part of $d^{\gamma'}Z$ for some connection rule $\gamma'$ on $E$, which does not depend on the choice of $\gamma'$.
We set $Y_t:=g(\cV_t, K^iJ_{i,t})$. Then for q.e. $z\in E$, the process
\begin{align}\label{IBPmart}
Y_t-Y_0- L^K_tI_t + \frac{1}{2} \int_0^tg(\cV_s, R_M^{\nabla}(K^i_s J_{i,s},dX^c_s)dX^c_s ) + \sum_{0<s\leq t}\langle \II( \cV_{s-} , K^i_s J_{i,s-}), \gamma^{\perp}(X_{s-},X_{s}) \rangle_{\delta_{\mathbb{R}^d}}
\end{align}
is a $(\Prob_z,\{\cF_t^Z\}_{t\geq 0})$-local martingale. Moreover, if $K$ satisfies
\begin{align}\label{Kbdd}
\Ex_z\left[ |K_0|^2+\int_0^T|\dot{K_s}|^2 ds \right]<\infty,
\end{align}
and $K_T=0$ a.s., then the process \eqref{IBPmart} is a true martingale and
\begin{align}\label{IBP}
\Ex_z\left[ g(\bV_0, K_0^i\cH_iu(z)) \right]&=\Ex_z \left[ -L^K_TI_T + \frac{1}{2} \int_0^T g(\cV_t, R_M^{\nabla}(K^i_t J_{i,t},dX^c_t)dX^c_t )\right. \nonumber\\
&\h \left.+ \sum_{0<t \leq T}\langle \II( \cV_{t-} , K^i_t J_{i,t-}), \gamma^{\perp}(X_{t-},X_{t}) \rangle_{\delta_{\mathbb{R}^d}} \right].
\end{align}
\end{thm}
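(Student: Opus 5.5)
The proof combines \tref{MeanValue}, applied linearly in the random weights $K^i$, with a Bismut-type integration by parts that trades the $K$-dependence of the endpoint for the stochastic integral $L^K_tI_t$.

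\emph{Step 1: weighted local martingale.} Since $E$ is compact and $u$ is harmonic on all of $E$, we take $D=D_1=E$, so $\tau_{D_1}=\zeta=\infty$ (the isotropic L\'evy process is conservative). Set
\[
Q^{(i)}_t:=\int_0^t\lambda_{g^{\bcl}}(h^{\nabla}_{\bJ_i}(\cV_-))\,d^{\gamma^{\bcl}}\bJ_i .
\]
By \tref{DiffHarmonic} and \cref{martveccondition}, each $Q^{(i)}$ is a local martingale, and by the computation in the proof of \tref{MeanValue} it equals $g(\cV,J_i)-g(\bV_0,J_{i,0})$ plus the curvature and second-fundamental-form terms. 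Form $\int K^i_{s-}\,dQ^{(i)}_s$, which is again a local martingale. Integration by parts gives
\[
K^i_tg(\cV_t,J_{i,t})-K^i_0g(\bV_0,J_{i,0})=\int_0^t K^i_{s-}\,d\bigl(g(\cV,J_i)\bigr)_s+\int_0^t \dot K^i_s\, g(\cV_s,J_{i,s})\,ds,
\]
and there is no bracket term because $K$ is continuous of finite variation. Hence
\[
Y_t-Y_0+\tfrac12\int g(\cV,R_M^\nabla(K^iJ_i,dX^c)dX^c)+\sum\langle\II(\cV_-,K^iJ_{i,-}),\gamma^\perp\rangle-\int_0^t\dot K^i_s\,g(\cV_s,J_{i,s})\,ds
\]
is a local martingale.

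\emph{Step 2: removing the drift.} We need to show that $\int_0^t\dot K^i_s\,g(\cV_s,J_{i,s})\,ds-L^K_tI_t$ is a local martingale. By Itô's product rule,
\[
L^K_tI_t=\int L^K_-\,dI+\int I_-\,dL^K+[L^K,I].
\]
Here $I$ is a local martingale because $X$ is a $\gamma$-martingale, and $L^K$ is a continuous local martingale, so only $[L^K,I]=[L^K,I]^c$ contributes drift. Its density must be identified as $\dot K^i\,g(\cV,J_i)\,ds$. Note $dI^c=\langle \cV_-,dX^c\rangle$, and the continuous martingale part of $X=u(Z)$ arises from the Brownian part of $Z$ through $du$. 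Formally $dX^c=du(dZ^c)$ with $d[Z^c,Z^c]=\sigma h^{-1}\,dt$ for the isotropic generator $\frac{\sigma}{2}\Delta_E$. Then
\[
d[L^K,I]=\frac1\sigma\dot K^i\,\sigma\, g(\cV,du(\cH_i))\,dt=\dot K^i\,g(\cV,J_i)\,dt.
\]

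\emph{Main obstacle.} Making Step 2 rigorous is the hardest part, since $u$ is only in $\cF$ and not smooth. I would identify $[M^{[u]},\int\lambda_h(\cH_i)(Z)\,dZ^c]$ via energy measures, using
\[
\mu^c_{\langle u,f\rangle}=\sigma\,h(\nabla u,\nabla f)\,d\bdm
\]
for the local part together with the Revuz correspondence, taking $f$ locally a function whose gradient is $\cH_i$ (or approximating by smooth functions). The $\Pi_{X_-}$ factor then disappears because $\cV_-$ is tangent. This holds for q.e. starting point.

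\emph{Step 3: true martingale and \eqref{IBP}.} Assume \eqref{Kbdd}. Compactness of $E$ and $M$ gives boundedness of $J_i$ and $\II$, and $|\cV|\le|\bV_0|$ by \pref{SDEtransport}. The brackets of $X,J_i$ have integrable expectation as in \rref{truemart}, and $E\,[L^K]_T\le C\,E\int_0^T|\dot K|^2$. Burkholder–Davis–Gundy and Cauchy–Schwarz then bound every term of the local martingale in $L^1$ uniformly up to time $T$; the product $L^KI$ needs $L^2$ bounds on both factors. This yields class (D) on $[0,T]$, so the process is a true martingale. Taking expectations at $T$ and using $K_T=0$ (so $Y_T=0$) and $J_{i,0}=\cH_iu(z)$ under $\Prob_z$ gives \eqref{IBP}.
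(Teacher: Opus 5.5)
Your overall route is the paper's. You weight the local martingales $Q_i=\int\lambda_{g^{\bcl}}(h^{\nabla}_{\bJ_i}(\cV_-))\,d^{\gamma^{\bcl}}\bJ_i$ by $K^i$, integrate by parts to expose the drift $\int\dot K^i g(\cV,J_i)\,ds$, cancel it against $[L^K,I]$ through the product rule for $L^KI$, and conclude with BDG.

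\textbf{The gap is in the step you flag as the main obstacle.} Your device of taking $f$ locally with gradient $\cH_i$ does not work. For a Killing field $\nabla\lambda_h(\cH_i)$ is skew, so $d\lambda_h(\cH_i)=2\nabla\lambda_h(\cH_i)$. Hence $\lambda_h(\cH_i)$ is closed, and so locally exact, only when $\cH_i$ is parallel. This excludes rotations of $\mathbb{S}^2$ and every nonzero Killing field under positive Ricci curvature. Approximating by gradients cannot repair this, since limits of closed forms are closed.

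What you need is to express $\int\lambda_h(\cH_i)(Z_-)\,dZ^c$ through martingale parts $M^{[f],c}$ of functions $f$. This requires writing the $1$-form as a combination of exact forms with smooth coefficients. The paper does this with an isometric embedding $\iota\colon E\to\mathbb{R}^D$. Then $\lambda_h(\cH_i)=\sum_j(\cH_i\iota^j)\,d\iota^j$, and
\[
L^K_t=\frac1\sigma\sum_{j=1}^D\int_0^t\dot K^i_s\,(\cH_i\iota^j)(Z_{s-})\,dM^{[\iota^j],c}_s .
\]
Since $X$ is a semimartingale, $N^{[u^k]}$ is of bounded variation, so the continuous martingale part of $X^k$ is $M^{[u^k],c}$. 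The energy measure of the local part gives $\langle M^{[u^k],c},M^{[\iota^j],c}\rangle_t=\sigma\int_0^t h(\nabla u^k,\nabla\iota^j)(Z_s)\,ds$. Using $\sum_j(\cH_i\iota^j)\nabla\iota^j=\cH_i$, the sum over $j$ becomes $\sigma\,\cH_iu^k(Z_s)$. This yields $d[L^K,I]=\dot K^i g(\cV,J_i)\,dt$, as your formal computation predicted.

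\textbf{A caution on Step 3.} Run the $L^1$ estimates on the representation $\sum_i\int K^i\,dQ_i-\int L^K\,dI-\int I_-\,dL^K$ from Steps 1--2, as the paper does. For example,
\[
\Ex_z\Big[\Big[\int K^i\,dQ_i\Big]_T^{1/2}\Big]\le\Ex_z\Big[\sup_{s\le T}|K_s|^2\Big]^{1/2}\Ex_z\big[[Q_i]_T\big]^{1/2}.
\]
Do not bound the terms of \eqref{IBPmart} one by one. The curvature and jump terms are controlled only by $C\int_0^T|K_s|\,d[X,X]_s$. Its integrability does not follow from \eqref{Kbdd} together with $\Ex_z[[X,X]_T]<\infty$.
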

\begin{proof}
By \tref{DiffHarmonic}, each $\bJ_i$ is a $(\Prob_z,\{\cF_t^Z\}_{t\geq 0})$-$\gamma^{\bcl}$-martingale for q.e. $z\in E$. We take such $z$.
Under the assumptions on $Z$, we have
\[
[I,L^K]_t=\int_0^t\langle \cV_s, \cH_iu(Z_s) \rangle \dot{K}^i_s\, ds.
\]
Indeed, by taking an isometric embedding $\iota \colon E \to \mathbb{R}^D$, we can rewrite
\[
L^K_t=\frac{1}{\sigma}\int_0^t \langle \dot{K}^i_{s}\iota_*\cH_i(Z_{s-}), dM^{[\iota],c}_s \rangle_{\mathbb{R}^D}.
\]
Since the continuous part of $(\cE,\cF)$ is
\[
\cE^c(\phi,\phi)=\frac{\sigma}{2} \int_E h(\nabla^E \phi, \nabla^E \phi)(z)\, dz
\]
for $\phi \in \cF$, where $\nabla^E$ is the gradient with respect to $h$, we have
\begin{align*}
\langle u^k(Z),M^{[\iota^j],c} \rangle_t&=\sigma \int_0^t h(\nabla^E u^k \cdot \nabla^E \iota^j) (Z_s)\, ds\\
&=\sigma \int_0^t \iota^j_*\nabla^E u^k(Z_s)\, ds,\ \Prob_z\text{-a.s. for q.e.}\ z\in E.
\end{align*}
Thus it holds that
\begin{align*}
[I,L^K]_t&=\frac{1}{\sigma}\sum_{k=1}^d\sum_{j=1}^D\int_0^t \cV_{k,s-} \dot{K}^i_{s}\iota^j_*\cH_i(Z_s) \, d\langle u^k(Z),M^{[\iota^j],c}\rangle\\
&=\sum_{k=1}^d\sum_{j=1}^D\int_0^t \cV_{k,s-}\dot{K}^i_{s}\iota^j_*\cH_i(Z_s) \iota^j_*\nabla^Eu^k(Z_s)\, ds\\
&=\int_0^t \langle \cV_s, \cH_iu(Z_s) \rangle_{\delta_{\mathbb{R}^d}} \dot{K}^i_s \, ds.
\end{align*}
Therefore
\[
d(L^KI)_t=L^K_t\, dI_t+I_{t-}\, dL^K_t + g(\cV_t, J_{i,t}) \dot{K}^i_t\, dt
\]
and the first two terms are local martingales.
Thus by integration by parts and \tref{MeanValue}, letting $\dis Q_i=\int \lambda_{g^{\bcl}}(h^{\nabla}_{\bJ_i}(\cV_-))\, d^{\gamma^{\bcl}}\bJ_i$, we obtain
\begin{align*}
dY_t&=K^i_t\, d \left(g( \cV_t, J_{i,t})\right) + g( \cV_{t-}, J_{i,t-}) \dot{K}^i_t\, dt\\
&= K^i_t\left(-\frac{1}{2} g(\cV_t, R_M^{\nabla}(J_{i,t},dX^c_t)dX^c_t ) - \langle \II( \cV_{t-}, J_{i,t-}), \gamma^{\perp}(X_{t-},X_{t}) \rangle_{\delta_{\mathbb{R}^d}}+dQ_{i,t}\right)\\
&\h + d\left(L^K_tI_t\right)-\left( L^K_t\, dI_t+I_{t-}\, dL^K_t \right).
\end{align*}
Thus the process \eqref{IBPmart} equals
\begin{align}\label{IBPmart2}
\int K^i \, dQ_i - \int L^K\, dI - \int I_-\, dL^K
\end{align}
and it is a local martingale.
Next, we assume that $K$ satisfies \eqref{Kbdd} and $K_T=0$ a.s. In the same way as in \rref{truemart}, noting that the local martingale parts of $X,J$ are square-integrable martingales, each $Q_i$ is also a square-integrable martingale. In addition, since $\cV_t$ is bounded by \pref{SDEtransport}, $I$ is also a square-integrable martingale. For $L^K$, since each $\cH_i$ is bounded, there exists a constant $C>0$ such that
\[
[L^K,L^K]_T \leq C\int_0^T|\dot{K_s}|^2\, ds.
\]
Thus by \eqref{Kbdd}, $L^K$ is also a square-integrable martingale.
We also have
\begin{align*}
\sup_{0\leq s \leq T}|K_s| &\leq |K_0| + \sup_{0\leq s \leq T}\left|\int_0^s\dot{K_r}\, dr \right| \\
&\leq |K_0| + \sqrt{T}\left( \int_0^T |\dot{K_s}|^2 \, ds \right)^{\frac{1}{2}}
\end{align*}
by the absolute continuity of $K$.
Therefore, by the Burkholder-Davis-Gundy inequality,
\begin{align*}
\Ex_z \left[ \left[ \int K^i\, dQ_i, \int K^j\, dQ_j \right]_T^{\frac{1}{2}} \right] &\leq C\Ex_z \left[ \sup_{0 \leq s \leq T}|K_s|^2 \right]^{\frac{1}{2}}\Ex_z \left[ [Q,Q]_T \right]^{\frac{1}{2}} <\infty, \\
\Ex_z \left[ \left[ \int L^K\, dI, \int L^K\, dI \right]_T^{\frac{1}{2}} \right] &\leq C\Ex_z \left[ \sup_{0 \leq s \leq T}|L^K_s|^2 \right]^{\frac{1}{2}}\Ex_z \left[ [I,I]_T \right]^{\frac{1}{2}} <\infty,\\
\Ex_z \left[ \left[ \int I_-\, dL^K, \int I_-\, dL^K \right]_T^{\frac{1}{2}} \right] &\leq C\Ex_z \left[ \sup_{0 \leq s \leq T}|I_s|^2 \right]^{\frac{1}{2}}\Ex_z \left[ [L^K,L^K]_T \right]^{\frac{1}{2}} <\infty
\end{align*}
for some constant $C>0$.
Therefore, the process \eqref{IBPmart} is a martingale. By taking the expectation of \eqref{IBPmart} at $t=T$, we get \eqref{IBP}.
\end{proof}

\section*{Acknowledgements}
The author is grateful to Prof. Marc Arnaudon for valuable discussion and comments.

\section*{Statements and Declarations}
{\bf Competing interests:} No conflict is related to this article, and the author has no relevant financial or non-financial interests to disclose.\\
{\bf Ethical Approval:} Not applicable to this article.\\
{\bf Funding:} This work was supported by JSPS KAKENHI Grant Numbers 24K22827 and 25K17264.\\
{\bf Data Availability Statement:} Data sharing is not applicable to this article as no datasets were generated or analyzed during the current study.

\end{document}